\newtheorem{condition}{Condition}
\newtheorem{theorem}{Theorem}
\newtheorem{lemma}{Lemma}
\newtheorem{remark}{Remark}
\newtheorem{corollary}{Corollary}
\newcommand{\Csup}{C_{\mathrm{sup}}}
\newcommand{\Cinf}{C_{\mathrm{inf}}}
\begin{document}
\graphicspath{{./figures/}}
\DeclareGraphicsExtensions{.pdf}

\begin{frontmatter}

\title{Asymptotic properties of the maximum likelihood and cross validation estimators for transformed Gaussian processes}

\runtitle{Asymptotics for transformed Gaussian processes}

\begin{aug}

\author{\fnms{Francois} \snm{Bachoc}\thanksref{a}\corref{}\ead[label=e1]{francois.bachoc@math.univ-toulouse.fr}},
\author{\fnms{Jos\'e} \snm{B\'etancourt}\thanksref{b}\ead[label=e2]{jose.betancourt@math.univ-toulouse.fr}},
\author{\fnms{Reinhard} \snm{Furrer}\thanksref{c}\ead[label=e3]{reinhard.furrer@math.uzh.ch}}
\and
\author{\fnms{Thierry} \snm{Klein}\thanksref{d}\ead[label=e4]{thierry.klein@math.univ-toulouse.fr}}
\address[a]{Institut de math\'ematique, UMR5219; Universit\'e de Toulouse;
CNRS, UPS IMT, F-31062 Toulouse Cedex 9, France; \printead{e1}}
\address[b]{Institut de math\'ematique, UMR5219; Universit\'e de Toulouse;
CNRS, UPS IMT, F-31062 Toulouse Cedex 9, France; \\
ENAC - Ecole Nationale de l'Aviation Civile, Universit\'e de
Toulouse, France; \printead{e2}}
\address[c]{Department of Mathematics and Department of Computational Science, University of Zurich, CH-8057 Zurich; \printead{e3}}
\address[d]{ENAC - Ecole Nationale de l'Aviation Civile, Universit\'e de
Toulouse, France;  \\
Institut de math\'ematique, UMR5219; Universit\'e de Toulouse;  \printead{e4}}
\runauthor{Bachoc, B\'etancourt, Furrer, Klein}

\affiliation{University Paul Sabatier and University of Zurich}

\end{aug}

\begin{abstract}
The asymptotic analysis of covariance parameter estimation of Gaussian processes has been subject to intensive investigation. However, this asymptotic analysis is very scarce for non-Gaussian processes. In this paper, we study a class of non-Gaussian processes obtained by regular non-linear transformations of Gaussian processes.
We provide the increasing-domain asymptotic properties of the (Gaussian) maximum likelihood and cross validation estimators of the covariance parameters of a non-Gaussian process of this class. We show that these estimators are consistent and asymptotically normal, although they are defined as if the process was Gaussian. They do not need to model or estimate the non-linear transformation. Our results can thus be interpreted as a robustness of (Gaussian) maximum likelihood and cross validation towards non-Gaussianity. Our proofs rely on two technical results that are of independent interest for the increasing-domain asymptotic literature of spatial processes. First, we show that, under mild assumptions, coefficients of inverses of large covariance matrices decay at an inverse polynomial rate as a function of the corresponding observation location distances. Second, we provide a general central limit theorem for quadratic forms obtained from transformed Gaussian processes. Finally, our asymptotic results are illustrated by numerical simulations.
\end{abstract}

\begin{keyword}[class=AMS]
\kwd[Primary ]{62M30}
\kwd[; secondary ]{62F12}
\end{keyword}

\begin{keyword}
\kwd{covariance parameters}
\kwd{asymptotic normality}
\kwd{consistency}
\kwd{weak dependence}
\kwd{random fields}
\kwd{increasing-domain asymptotics}
\end{keyword}

\end{frontmatter}

\section{Introduction}

Kriging \cite{stein99interpolation,rasmussen06gaussian} consists of inferring the values of a (Gaussian) random field given observations at a finite set of points.
It has become a popular method for a
large range of applications, such as geostatistics \cite{matheron70theorie}, numerical code approximation \cite{sacks89design,santner03design,bachoc16improvement}, calibration \cite{paulo12calibration,bachoc14calibration}, global optimization \cite{jones98efficient}, and machine learning \cite{rasmussen06gaussian}.

When considering a Gaussian process, one has to deal with the estimation of its covariance function.
Usually, it is assumed that the covariance function belongs to a given parametric family (see \cite{abrahamsen97review} for a review of classical families).
In this case, the estimation boils down to estimating the corresponding covariance parameters. Nowadays, the main estimation techniques are based on maximum likelihood \cite{stein99interpolation,rasmussen06gaussian}, cross-validation \cite{zhang10kriging,bachoc2013cross,bachoc14asymptotic,bachoclagnoux2017} and variation estimators \cite{istas97quadratic,And2010,ABKLN18}. 

The asymptotic properties of estimators of the covariance parameters have been widely studied in the two following frameworks. The fixed-domain asymptotic framework, sometimes called infill asymptotics \cite{stein99interpolation, Cre1993}, corresponds to the case where more and more data are observed in some fixed bounded sampling domain. The increasing-domain asymptotic framework corresponds to the case where the sampling domain  increases with the number of observed data. 

Under fixed-domain asymptotics, and particularly in low dimensional settings, not all covariance parameters can be estimated consistently (see \cite{ibragimov78gaussian,stein99interpolation}). Hence, the distinction is made between microergodic and non-microergodic covariance parameters \cite{ibragimov78gaussian,stein99interpolation}.  Although non-micro\-ergodic parameters cannot be estimated consistently, they have an asymptotically negligible impact on prediction \cite{AEPRFMCF,BELPUICF,UAOLPRFUISOS,zhang04inconsistent}.
There is, however, a fair amount of literature on the consistent estimation of microergodic parameters (see for instance \cite{zhang04inconsistent,ShaKau2013,DuZhaMan2009,WanLoh2011,ying91asymptotic,ying93maximum}).  

This paper focuses on the increasing-domain asymptotic framework.
Indeed, generally speaking, increasing-domain asymptotic results hold for significantly more general families of covariance functions than fixed-domain ones.
Under increasing-domain asymptotics, the maximum likelihood and cross validation estimators of the covariance parameters are consistent and asymptotically normal under mild regularity conditions \cite{mardia84maximum,shaby12tapered,bachoc14asymptotic,furrer2016asymptotic}.

All the asymptotic results discussed above are based on the assumption that the data come from a Gaussian random field. This assumption is indeed theoretically convenient but might be unrealistic for real applications. When the data stem from a non-Gaussian random field, it is still relevant to estimate the covariance function of this random field. Hence, it would be valuable to extend the asymptotic results discussed above to the problem of estimating the covariance parameters of a non-Gaussian random field.

In this paper, we provide such an extension, in the special case where the non-Gaussian random field is a deterministic (unknown) transformation of a Gaussian random field. Models of transformed Gaussian random fields have been used extensively in practice
(for example in \cite{chiles2009geostatistics,xu2017tukey,alegria2017estimating,yan2018gaussian}).

Under reasonable regularity assumptions, we prove that applying the (Gaussian) maximum likelihood estimator to data from a transformed Gaussian random field yields a consistent and asymptotically normal estimator of the covariance parameters of the transformed random field. This (Gaussian) maximum likelihood estimator corresponds to what would typically be done in practice when applying a Gaussian process model to a non-Gaussian spatial process. This estimator does not need to know the existence of the non-linear transformation function and is not based on the exact density of the non-Gaussian data. We refer to Remark~\ref{remark:explaination:ML} for further details and discussion on this point.

We then obtain the same consistency and asymptotic normality result when considering a cross validation estimator. In addition, we establish the joint asymptotic normality of both these estimators, which provides the asymptotic distribution of a large family of aggregated estimators. Our asymptotic results on maximum likelihood and cross validation are illustrated by numerical simulations.

To the best of our knowledge, our results (Theorems~\ref{theorem:consistency:thetaML}, \ref{theorem:CLT:thetaML}, \ref{theorem:consistency:thetaCV}, \ref{theorem:CLT:thetaCV} and \ref{theorem:CLT:joint}) provide the first increasing-domain asymptotic analysis of Gaussian maximum likelihood and cross validation for non-Gaussian random fields. Our proofs intensively rely on Theorems~\ref{theorem:decay:coeff:Rinverse} and~\ref{theorem:generic:TCL}. Theorem~\ref{theorem:decay:coeff:Rinverse} shows that the components of inverse covariance matrices are bounded by inverse polynomial functions of the corresponding distance between observation locations. Theorem~\ref{theorem:generic:TCL} provides a generic central limit theorem for quadratic forms constructed from transformed Gaussian processes. These two theorems have an interest in themselves. 

The rest of the paper is organized as follows. In Section~\ref{section:general properties}, general properties of transformed Gaussian processes are provided. In Section~\ref{section:two:main:technical:results},  Theorems~\ref{theorem:decay:coeff:Rinverse} and~\ref{theorem:generic:TCL} are stated. In Section~\ref{section:sigma}, an application of these two theorems is given to the case of estimating a single variance parameter. In Section~\ref{sec:estitheta}, the consistency and asymptotic normality results for general covariance parameters are given. The joint asymptotic normality result is also given in this section. The simulation results are provided in Section~\ref{section:simulation:result}. All the proofs are provided in the appendix.

\bigskip

\section{General properties of transformed Gaussian processes} \label{section:general properties}

In applications, the use of Gaussian process models may be too restrictive. One possibility for obtaining larger and more flexible classes of random fields is to consider transformations of Gaussian processes.
In this section, we now introduce the family of transformed Gaussian processes that we will study asymptotically in this paper. This family is determined by regularity conditions on the covariance function of the original Gaussian process and on the transformation function. 

Let us first introduce some notation. Throughout the paper, $\Cinf > 0$ (resp. $\Csup < \infty$) denotes a generic strictly positive (resp. finite) constant. This constant never depends on the number of observations $n$, or on the covariance parameters (see Section~\ref{sec:estitheta}), but is allowed to depend on other variables. We mention these dependences explicitly in cases of ambiguity. 
 The values of $\Cinf$  and $\Csup$ may change across different occurrences.
 
For a vector $x$ of dimension $d$ we let $|x| = \max_{i=1,\dots,d} |x_i|$. Further, the Euclidean and operator norms are denoted by $||x||$ and by  $||  M ||_{op}=\sup\{ ||Mx||: ||x||\leq 1 \}$, for any matrix $M$.
We let $\lambda_1(B) \geq \ldots \geq \lambda_r(B) $ be the $r$ eigenvalues of a $r \times r$ symmetric matrix $B$. We let $\rho_1(B) \geq \ldots \geq \rho_r(B) \geq 0$ be the $r$ singular values of a $r \times r$ matrix $B$.
We let $\mathbb{N}$ be the set of non-zero natural numbers.

Further, we define the Fourier transform of a function $h : \IR^d \to \IR$ by $\hat{h}(f) = $ $ (2 \pi)^{-d} \int_{\IR^d} h(t) e^{- \mathrm{i} f^\top t} dt$, where $\mathrm{i}^2 = -1$.

For a sequence of observation locations, the next condition ensures that a fixed distance between any two observation locations exists. This condition is classical \cite{bachoc14asymptotic,bachoc2016smallest}.

\begin{condition} \label{cond:delta}
We say that a sequence of observation locations, $(x_i)_{i \in \mathbb{N}}, x_i \in \mathbb{R}^d,$ is asymptotically well-separated if we have $ \inf_{i , j \in \mathbb{N}, i \neq j} |x_i - x_j| >0 $.
\end{condition}

The next condition on a stationary covariance function is classical under increasing-domain asymptotics. This condition provides asymptotic decorrelation for pairs of distant observation locations and implies that covariance matrices are asymptotically well-conditioned when a minimal distance between any two distinct observation locations exists \cite{mardia84maximum,bachoc14asymptotic}.

\begin{condition} \label{cond:sub:exp:ass:pos}
We say that a stationary covariance function $k$ on $\mathbb{R}^d$ is sub-exponential and asymptotically positive if:
\begin{itemize}
\item[i)] $\Csup$ and $\Cinf$ exist such that, for all $s \in \mathbb{R}^d$, we have
\begin{equation} \label{eq:k:decay}
|k(s)| \leq 
\Csup \exp{ (- \Cinf |s|) };
\end{equation}
\item[ii)] For any sequence $(x_i)_{i \in \mathbb{N}}$ satisfying Condition~\ref{cond:delta}, we have $\inf_{n \in \mathbb{N}} \lambda_{n}(\Sigma) >0$, where $\Sigma$ is the $n \times n$ matrix $(k(x_i-x_j))_{i,j=1,\dots,n}$.
\end{itemize}

\end{condition}

In Condition~\ref{cond:sub:exp:ass:pos}, we remark that $k : \mathbb{R}^d \to \mathbb{R}$ is called a stationary covariance function in the sense that $(x_1,x_2) \to k(x_1-x_2)$ is a covariance function. We use this slight language abuse for convenience.

We also remark that, when non-transformed Gaussian processes are considered, a polynomial decay of the covariance function in Condition~\ref{cond:sub:exp:ass:pos} i) is sufficient to obtain asymptotic results \cite{bachoc14asymptotic,bachoc2018asymptotic}. Here an exponential decay is needed in the proofs to deal with the non-Gaussian case. Nevertheless, most classical families of covariance functions satisfy inequality \eqref{eq:k:decay}.  

When considering a transformed Gaussian process, we will consider a transformation satisfying the following regularity condition, which enables us to subsequently obtain regularity conditions on the covariance function of the transformed Gaussian process.

\begin{condition} \label{cond:T}
Let $F: \mathbb{R} \to \mathbb{R}$ be a fixed non-constant  continuously differentiable function, with derivative $F'$.
We say that $F$ is sub-exponential and non-decreasing if:
\begin{itemize}
\item[i)] For all $t \in \mathbb{R}$, we have $| F(t) | \leq \Csup \exp{ ( \Csup |t|) }$ and $| F'(t) | \leq \Csup \exp{ ( \Csup |t|) }$;
\item[ii)] The function $F$ is non-decreasing on $\mathbb{R}$.
\end{itemize}
\end{condition}

In the following lemma, we show that the covariance function of a transformed Gaussian process satisfies Condition~\ref{cond:sub:exp:ass:pos}, when  Conditions \ref{cond:sub:exp:ass:pos} and \ref{cond:T} are satisfied, for the original process and for the transformation.

\begin{lemma} \label{lemma:sub:exp:ass:pos}
Assume that the stationary covariance function $k$ satisfies Condition~\ref{cond:sub:exp:ass:pos} and that the transformation $F$ satisfies Condition~\ref{cond:T}. 
Let $X$ be a zero-mean Gaussian process with covariance function $k$ and let $k'$ be the stationary covariance function of $F(X(\cdot))$.
Then, $k'$ satisfies Condition~\ref{cond:sub:exp:ass:pos}.
\end{lemma}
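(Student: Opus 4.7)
The plan is to use the Hermite expansion of the transformation, which diagonalizes the second-order structure of $F(X(\cdot))$ and turns the nonlinear dependence into a positive combination of Hadamard powers of the Gaussian correlation matrix.

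First I would normalize by setting $\sigma^2 = k(0) > 0$, $Y(\cdot) = X(\cdot)/\sigma$, and $G(t) = F(\sigma t)$, so that $Y$ has unit variance, the correlation function $r(s) = k(s)/\sigma^2$ satisfies $|r(s)| \leq 1$, and $G$ is still sub-exponential, non-decreasing, and non-constant. Since $G$ is sub-exponential, $G(Z) \in L^2(\gamma)$ for the standard Gaussian measure $\gamma$ (Gaussian moment generating functions exist near the origin), so $G$ admits an $L^2$-convergent expansion $G = \sum_{m \geq 0} c_m H_m$ in probabilist's Hermite polynomials, with $c_m = \mathbb{E}[G(Z) H_m(Z)]/m!$ and $\sum_{m \geq 0} c_m^2 m! = \mathbb{E}[G(Z)^2] < \infty$. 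Mehler's formula then yields
$$k'(s) = \Cov(G(Y(0)), G(Y(s))) = \sum_{m \geq 1} c_m^2 m!\, r(s)^m.$$

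For part (i), since $|r(s)| \leq 1$ I would bound $|r(s)|^m \leq |r(s)|$ for each $m \geq 1$, giving
$$|k'(s)| \leq |r(s)| \sum_{m \geq 1} c_m^2 m! = \Var(G(Z))\, |k(s)|/\sigma^2 \leq \Csup \exp(-\Cinf |s|)$$
by Condition~\ref{cond:sub:exp:ass:pos}(i) for $k$. For part (ii), given any sequence $(x_i)$ satisfying Condition~\ref{cond:delta}, let $R$ be the $n \times n$ correlation matrix with entries $r(x_i - x_j)$. Applying the preceding identity entrywise gives
$$\Sigma' = \sum_{m \geq 1} c_m^2 m!\, R^{\circ m},$$
where $R^{\circ m}$ is the $m$-th Hadamard power of $R$. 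By the Schur product theorem each $R^{\circ m}$ is positive semidefinite, so $\Sigma' \succeq c_1^2 R$, and thus $\lambda_n(\Sigma') \geq c_1^2 \lambda_n(R) = (c_1^2/\sigma^2)\, \lambda_n(\Sigma)$.

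The main obstacle is showing $c_1 > 0$. Here Stein's lemma gives $c_1 = \mathbb{E}[G(Z) Z] = \mathbb{E}[G'(Z)] = \sigma\, \mathbb{E}[F'(X(0))]$, and this is strictly positive because $F'$ is continuous, nonnegative, and not identically zero (since $F$ is non-constant and continuously differentiable), while $X(0) \sim \mathcal{N}(0, \sigma^2)$ has positive density everywhere. Combined with Condition~\ref{cond:sub:exp:ass:pos}(ii) applied to $k$, this yields $\inf_n \lambda_n(\Sigma') > 0$, establishing Condition~\ref{cond:sub:exp:ass:pos}(ii) for $k'$. A secondary technical point is justifying the series identity for $k'(s)$ and its matrix form: this follows from the $L^2$-convergence of the Hermite expansion together with Cauchy--Schwarz (which bounds $\mathbb{E}|G(Y(0)) G(Y(s))| \leq \mathbb{E}[G(Z)^2] < \infty$) and the Mehler orthogonality $\mathbb{E}[H_m(Z_1) H_n(Z_2)] = m!\, \rho^m\, \delta_{mn}$ for jointly standard Gaussian $(Z_1, Z_2)$ with correlation $\rho$, the absolute convergence $\sum_m c_m^2 m! < \infty$ permitting all manipulations.
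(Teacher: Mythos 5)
Your proof is correct, but it takes a genuinely different route from the paper's. For part (i) the paper invokes its Lemma~\ref{lemma:cov:muliple:products} (with $r_1=r_2=1$), which bounds covariances of products of transformed values by conditioning one block of Gaussian variables on the other, Taylor-expanding $f_2$, and applying Cauchy--Schwarz; your Hermite--Mehler expansion instead yields the pointwise domination $|k'(s)| \leq \Csup\, |k(s)|$, which is stronger than mere exponential decay but does not generalize to the higher-order products the paper needs elsewhere. For part (ii) the paper applies the Cacoullos (1982) Chernoff-type inequality $\Var(g(z)) \geq \IE[\nabla g(z)]^\top \Cov(z)\, \IE[\nabla g(z)]$ to $g(t) = \sum_i a_i F(t_i)$, whereas you keep only the $m=1$ term of the Hadamard-power expansion via the Schur product theorem; amusingly, since $c_1 = \sigma\, \IE[F'(X(0))]$ by Stein's lemma, your bound $a^\top \Sigma' a \geq c_1^2\, a^\top R\, a = \IE[F'(X(0))]^2\, a^\top \Sigma\, a$ is numerically identical to the paper's, and both reduce the problem to $\IE[F'(X(0))] > 0$, which follows from $F$ being non-decreasing, non-constant and $C^1$. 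Your decomposition $k' = \sum_{m\geq 1} c_m^2 m!\, r^m$ is also essentially the mechanism the paper uses later in Lemma~\ref{lemma:lambda:inf:R} (binomial expansion plus Isserlis) to handle the even-monomial case where $c_1 = 0$ and a Fourier-positivity argument must replace the $m=1$ term; so your approach arguably unifies the two lemmas, at the cost of not producing the more general product-covariance bound that the paper's Lemma~\ref{lemma:cov:muliple:products} supplies for the central limit theorem machinery.
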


In the next lemma, we show that we can replace the condition of an increasing-transformation by the condition of a monomial transformation of even degree (with an additive constant).

\begin{lemma} \label{lemma:lambda:inf:R}
If a covariance function $k$ satisfies Condition~\ref{cond:sub:exp:ass:pos} (i) and if the Fourier transform  $\hat{k}$ of $k$ is strictly positive on $\mathbb{R}^d$, then $k$ satisfies Condition~\ref{cond:sub:exp:ass:pos} (ii). 
Furthermore, in this case, in Lemma~\ref{lemma:sub:exp:ass:pos}, Condition~\ref{cond:T} (ii) can be replaced by the condition $F(x) = x^{2r} + u$ for $r \in \mathbb{N}$, $u \in \mathbb{R}$ and $x \in \mathbb{R}$. 
\end{lemma}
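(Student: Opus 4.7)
My plan is to prove the two assertions separately. For the first, I would work spectrally. Since $k$ satisfies Condition~\ref{cond:sub:exp:ass:pos}(i) we have $k\in L^{1}(\mathbb{R}^d)$, so $\hat k$ is continuous on $\mathbb{R}^d$ (in fact real-analytic, by the exponential decay of $k$), and strict positivity then gives $c_R:=\inf_{|\xi|\le R}\hat k(\xi)>0$ for every $R>0$. For a sequence $(x_i)$ satisfying Condition~\ref{cond:delta} with separation $\delta$, and any $a\in\mathbb{R}^n$, the spectral representation reads
$$a^{\top}\Sigma a \;=\; \int_{\mathbb{R}^d}\hat k(\xi)\,|P(\xi)|^{2}\,d\xi,\qquad P(\xi)=\sum_{i=1}^{n}a_{i}\,e^{i\xi\cdot x_{i}},$$
so the task reduces to an Ingham-type inequality $\int_{|\xi|\le R}|P(\xi)|^{2}\,d\xi \ge c(\delta,d)\,\|a\|^{2}$, uniformly over all $\delta$-separated configurations, valid once $R$ is large enough depending only on $\delta$ and $d$.

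To prove this inequality, choose a non-negative $\psi\in C_{c}^{\infty}(\mathbb{R}^d)$, not identically zero, with support in the ball $B(0,\delta/4)$. The $\delta$-separation forces the translates $\psi(\cdot-x_{i})$ to be pairwise disjoint-supported, so $\|\sum_i a_i\psi(\cdot-x_i)\|_{2}^{2}=\|\psi\|_{2}^{2}\|a\|^{2}$, and Plancherel yields $\int|\hat\psi|^{2}|P|^{2}\,d\xi=c_{0}\|\psi\|_{2}^{2}\|a\|^{2}$ with a universal constant $c_{0}=c_{0}(d)>0$. The core difficulty is that $|P|^{2}$ is not integrable, so dominated convergence is unavailable to control $\int_{|\xi|>R}|\hat\psi|^{2}|P|^{2}$. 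I would bypass this with a Bessel-weight Schur estimate: for integer $M>d/2$,
$$\int_{\mathbb{R}^d}(1+|\xi|^{2})^{-M}|P(\xi)|^{2}\,d\xi \;=\; \sum_{i,j}a_i a_j\,G_{M}(x_i-x_j)\;\le\;\|a\|^{2}\sup_{i}\sum_{j}|G_{M}(x_i-x_j)|,$$
where $G_{M}$ is the Bessel potential kernel. Since $G_{M}$ decays exponentially at infinity, the supremum on the right is bounded by a constant depending only on $M$, $d$, and $\delta$. Combining this with the Schwartz decay $|\hat\psi(\xi)|^{2}\le C_{M}(1+|\xi|^{2})^{-2M}$ gives $\int_{|\xi|>R}|\hat\psi|^{2}|P|^{2}\le C_{M}(1+R^{2})^{-M}C(M,\delta,d)\|a\|^{2}$, which is less than half of the total once $R$ is large enough (independently of $n$ and $a$). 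Dividing by $\|\hat\psi\|_{\infty}^{2}$ then extracts the Ingham lower bound, and multiplying by $c_{R}$ concludes Part~1. The tail control via the Bessel weight is the main technical obstacle here.

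For the second assertion I would expand $k'(s)=\Cov(X(0)^{2r},X(s)^{2r})$ via Wick's formula applied to the $4r$ Gaussian factors. Partitioning pairings by the number $2\ell$ of cross pairs (necessarily even because the internal pairings require an even number of leftover factors on each side, $\ell=0,\dots,r$), the $\ell=0$ term cancels with $\E[X(0)^{2r}]\E[X(s)^{2r}]$, leaving
$$k'(s)\;=\;\sum_{\ell=1}^{r}c_{\ell}\,k(0)^{2r-2\ell}\,k(s)^{2\ell}, \qquad c_{\ell}=\binom{2r}{2\ell}^{2}(2\ell)!\,\bigl((2r-2\ell-1)!!\bigr)^{2}>0.$$
Condition~\ref{cond:sub:exp:ass:pos}(i) for $k'$ follows at once from $|k(s)|^{2\ell}\le \Csup^{2\ell}e^{-2\ell\Cinf|s|}$. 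For Condition~\ref{cond:sub:exp:ass:pos}(ii), I note that $\widehat{k'}=\sum_{\ell}c_{\ell}k(0)^{2r-2\ell}\,\hat k^{\ast 2\ell}$, a positive combination of iterated self-convolutions of $\hat k$. Since $\hat k>0$ pointwise and $\hat k\in L^{1}$ (its integral is the finite quantity $k(0)$), the integrand $\hat k(\eta)\hat k(\xi-\eta)$ of $(\hat k\ast\hat k)(\xi)$ is pointwise positive, so $\hat k\ast\hat k>0$ everywhere, and by induction $\hat k^{\ast m}>0$ on $\mathbb{R}^d$ for every $m\ge1$. Hence $\widehat{k'}>0$, and Part~1 applied to $k'$ delivers Condition~\ref{cond:sub:exp:ass:pos}(ii), completing the proof.
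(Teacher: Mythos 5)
Your argument is correct, and it diverges from the paper in one substantive place. For the first assertion the paper does no work at all: it simply invokes Theorem~4 of \cite{bachoc2016smallest}. You instead reprove that eigenvalue bound from scratch via the spectral identity $a^\top \Sigma a=\int \hat k\,|P|^2$, a compactly supported bump $\psi$ with disjoint translates, and a Bessel-weight Schur bound $\int(1+|\xi|^2)^{-M}|P|^2\le \Csup \|a\|^2$ to kill the high-frequency tail; this is a sound, self-contained Ingham-type argument (essentially the content of the cited theorem), at the cost of length the paper avoids by citation. For the second assertion the two routes converge on the same key identity, namely that $k'$ is a linear combination of even powers of $k$ with strictly positive coefficients, followed by the same observation that $\widehat{k'}$ is then a positive combination of iterated self-convolutions of the strictly positive $\hat k$, hence strictly positive, so that the first assertion applies to $k'$. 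Your derivation of the identity by counting Wick pairings with $2\ell$ cross pairs is arguably cleaner than the paper's: the paper writes $X(s)=cA_1+\lambda A_2$ with $\lambda=(1-c^2)^{1/2}$, expands binomially and applies Isserlis termwise, which leaves implicit the re-expansion of the factors $\lambda^{2r-i}=(1-c^2)^{(2r-i)/2}$ in powers of $c$ and the verification that the resulting coefficients of $c^{2i}$ stay positive; your combinatorial formula $c_\ell=\binom{2r}{2\ell}^2(2\ell)!\bigl((2r-2\ell-1)!!\bigr)^2$ makes the positivity manifest, and the cancellation of the $\ell=0$ term against $\E[X(0)^{2r}]\E[X(s)^{2r}]=\bigl((2r-1)!!\bigr)^2k(0)^{2r}$ checks out. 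The only cosmetic omission is that you silently drop the additive constant $u$ when writing $k'(s)=\Cov(X(0)^{2r},X(s)^{2r})$, which is harmless since covariances are translation-invariant.
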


\section{Two main technical results}
\label{section:two:main:technical:results}

\subsection{Transformed Gaussian process framework}
\label{subection:tranformed:GP:framework}

Throughout the paper, we will consider an unobserved latent Gaussian process $Z$ on $\mathbb{R}^d$ with $d \in \mathbb{N}$ fixed. Assume that $Z$ has zero-mean and stationary covariance function $k_{Z}$. We assume throughout that $k_Z$ satisfies Condition~\ref{cond:sub:exp:ass:pos}.

We consider a fixed transformation function $T$ satisfying Condition~\ref{cond:T}.
We assume that we observe the transformed Gaussian process $Y$, defined by $Y(s) = T(Z(s))$ for any $s \in \mathbb{R}^d$.

We assume throughout that the random field $Y$ has zero-mean. We remark that, for a non-linear transformation $F : \mathbb{R} \to \mathbb{R}$, the random variable $F(X(s))$ does not necessarily have zero mean for $s \in \mathbb{R}^d$. Hence, we implicitly assume that $T$ is of the form $F -  \mathbb{E}[F(Z(x))]$, where $F$ satisfies Condition~\ref{cond:T}. 
Note that $\mathbb{E}[F(Z(x))]$ is constant by stationarity and that, if $F$ satisfies Condition~\ref{cond:T} or the condition specified in Lemma~\ref{lemma:lambda:inf:R}, then $F -  \mathbb{E}[F(Z(x))]$ also satisfies these conditions. Here, as in many references, the assumption of a zero-mean for $Y$ is made by notational convenience and for the sake of brevity, and could be alleviated.

We let $k_{Y}$ be the covariance function of $Y$. We remark that, from Lemma~\ref{lemma:sub:exp:ass:pos}, $k_Y$ also satisfies Condition~\ref{cond:sub:exp:ass:pos}.

We let $(s_i)_{i \in \mathbb{N}}$ be the sequence of observation locations, with $s_i \in \mathbb{R}^d$ for $i \in \mathbb{N}$. We assume that $(s_i)_{i \in \mathbb{N}}$ satisfies Condition~\ref{cond:delta}. 

For $n \in \mathbb{N}$, we let $y= (y_1,\ldots,y_n)^\top = ( Y(s_1),\ldots,Y(s_n) )^\top$ be the (non-Gaussian) observation vector and $R = (k_Y(s_i-s_j))_{i,j=1,\ldots,n}$ be its covariance matrix. 

The problem of estimating the covariance function $k_Y$ from the observation vector $y$ is crucial and has been extensively studied in the Gaussian case (when $T$ is a linear function). Classically, we assume that $k_Y$ belongs to a parametric family of covariance functions. We will provide the asymptotic properties of two of the most popular estimators of the covariance parameters: the one based on the (Gaussian) maximum likelihood \cite{rasmussen06gaussian,stein99interpolation} and the one based on cross validation \cite{bachoclagnoux2017,bachoc2013cross,zhang10kriging}. To our knowledge, such properties are currently known only for Gaussian processes, and we will provide analogous properties in the transformed Gaussian framework.

\subsection{Bounds on the elements of inverse covariance matrices}

In the case of (non-transformed) Gaussian processes, one important argument for establishing the asymptotic properties of the maximum likelihood and cross validation estimators is to bound the largest eigenvalue of the inverse covariance matrix $R^{-1}$. Unfortunately, due to the non-linearity of the transformation $T$, such a bound on the largest eigenvalue is no longer sufficient in our setting. 

To circumvent this issue, we obtain in the following theorem stronger control over the matrix $R^{-1}$: we show that its coefficients decrease polynomially quickly with respect to the corresponding distance between observation locations. This theorem may have an interest in itself.

\begin{theorem} \label{theorem:decay:coeff:Rinverse}
Consider the setting of Section~\ref{subection:tranformed:GP:framework}.
For all fixed $0 < \tau < \infty$, we have, for all $n \in \mathbb{N}$ and $i,j = 1 , \ldots , n$
\[
\left| \left( R^{-1} \right)_{i,j} \right|
\leq
\frac{ \Csup }{ 1 + | s_i - s_j |^{d+\tau} },
\]
where $\Csup$ depends on $\tau$ but does not depend on $n,i,j$.
\end{theorem}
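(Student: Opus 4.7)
The plan is to apply a Jaffard-type inverse-closedness result: for matrices indexed by a relatively separated point set in $\mathbb{R}^d$, off-diagonal polynomial decay of order strictly greater than $d$ is preserved under inversion, provided the matrix has bounded $\ell^2$-operator-norm inverse. Since $R$ enjoys exponential (hence super-polynomial) decay and a uniform operator-norm bound on its inverse, this will give the stated bound for any $\tau > 0$.

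First I would translate the hypotheses from $k_Z$ to $k_Y$ via Lemma~\ref{lemma:sub:exp:ass:pos}, so that $k_Y$ satisfies Condition~\ref{cond:sub:exp:ass:pos}. This yields $|R_{ij}| \leq \Csup \exp(-\Cinf |s_i - s_j|)$ (which in particular implies polynomial decay of any order) together with $\lambda_n(R) \geq \Cinf$, so $\|R^{-1}\|_{\mathrm{op}} \leq 1/\Cinf$. Condition~\ref{cond:delta} provides the relative separation of $(s_i)$, and a Schur test using the exponential decay and this separation gives a uniform bound $\|R\|_{\mathrm{op}} \leq \Csup$; consequently $\mathrm{spec}(R) \subset [\Cinf, \Csup]$.

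I would then prove the needed decay by a functional-calculus argument. Chebyshev approximation of $1/x$ on $[\Cinf, \Csup]$ produces polynomials $p_m$ of degree $m$ satisfying $\|R^{-1} - p_m(R)\|_{\mathrm{op}} \leq \Csup\, \mu^m$ for some fixed $\mu \in (0,1)$. To obtain entry-wise bounds, fix $j$ and introduce the diagonal weight $W$ with $W_{\ell\ell} = \exp(c|s_\ell - s_j|)$ for a small $c \in (0, \Cinf)$. The triangle inequality shows $|(WRW^{-1})_{k\ell}| \leq \Csup \exp(-(\Cinf - c)|s_k - s_\ell|)$; combined with Condition~\ref{cond:delta}, a Schur test yields $\|WRW^{-1}\|_{\mathrm{op}} \leq \tilde{C}$ uniformly in $n$ and $j$. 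Since $|R^k_{ij}| = e^{-c|s_i - s_j|}\, |(W R^k W^{-1})_{ij}| \leq \tilde{C}^k e^{-c|s_i - s_j|}$, and $p_m(R)_{ij}$ involves only the $R^k_{ij}$ with $k \geq 1$ (the constant term vanishes off the diagonal), careful bookkeeping on the Chebyshev coefficients gives $|p_m(R)_{ij}| \leq C_0^m e^{-c|s_i - s_j|}$ for some $C_0 > 0$.

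Combining these two estimates yields $|(R^{-1})_{ij}| \leq \Csup \mu^m + C_0^m e^{-c|s_i - s_j|}$. Choosing $m = \lfloor \eta |s_i - s_j| \rfloor$ for sufficiently small $\eta > 0$ balances the two terms and produces exponential decay of $|(R^{-1})_{ij}|$, which is strictly stronger than the claimed polynomial bound for any $\tau > 0$ (and gives the stated form after using $(1+x)^{-(d+\tau)} \leq (1+x^{d+\tau})^{-1}$ for $x \geq 0$). The main obstacle is the joint control of the Chebyshev coefficient growth against the entry-wise bounds on $R^k$: the weight parameter $c$ must be taken small relative to $\Cinf$ so that $\tilde{C}$ stays manageable, and the degree $m$ must then be calibrated against $|s_i - s_j|$ so that the polynomial growth in $m$ is absorbed by the exponential gain from the weight.
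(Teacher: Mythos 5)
Your proposal is correct, but it follows a genuinely different route from the paper. The paper also starts from the two inputs you identify (exponential off-diagonal decay of $R$ via Lemma~\ref{lemma:sub:exp:ass:pos}, and uniform two-sided spectral bounds $\Cinf \leq \lambda_n(R) \leq \lambda_1(R) \leq \Csup$), but then expands $R^{-1}$ as the Neumann series $\Csup^{-1}\sum_{\ell \geq 0}(I - R/\Csup)^\ell$ and proves by induction an entrywise bound $|(M^\ell)_{i,j}| \leq (A\varphi D)^{\ell} h_{i,j}^{d\ell - d}e^{-a h_{i,j}}$ on the powers of $M = I - R/\Csup$, valid only up to depth $\ell \leq \mu \log h_{i,j}$; the tail of the series is controlled purely by the spectral contraction $\Delta^\ell$ with $\Delta = 1 - \Cinf/\Csup$, which at logarithmic truncation depth yields $h_{i,j}^{\mu\log\Delta}$, i.e.\ polynomial decay of arbitrary order $d+\tau$ upon choosing $\mu$ large. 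Your Chebyshev-plus-exponential-weight argument (in the spirit of Demko--Moss--Smith and Jaffard) instead truncates at depth \emph{linear} in $|s_i - s_j|$, which is what lets you balance $\Csup\mu^m$ against $C_0^m e^{-c|s_i-s_j|}$ and obtain genuine \emph{exponential} decay of $(R^{-1})_{i,j}$ --- strictly stronger than the stated polynomial bound, and consistent with the known inverse-closedness results for exponentially localized matrix algebras. The trade-off: the paper's argument is elementary and self-contained (no approximation theory, no weighted norms), and polynomial decay of every order is all that is needed downstream; your argument delivers more but hides its real work in the step you label ``careful bookkeeping,'' namely that the monomial coefficients of the degree-$m$ Chebyshev approximant of $1/x$ on a fixed interval $[\Cinf,\Csup]$ grow at most geometrically in $m$ --- this is true and standard, but it is the crux, since faster coefficient growth would break the balancing. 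One small point: the size of $\tilde C$ from the Schur test need not be ``manageable'' in any quantitative sense; it only needs to be finite and uniform in $n$ and $j$, after which it is absorbed into $C_0$ and hence into the choice of $\eta$.
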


\subsection{Central limit theorem for quadratic forms of transformed Gaussian processes}

In the proofs on covariance parameter estimation of Gaussian processes, a central step is to show the asymptotic normality of quadratic forms of large Gaussian vectors. This asymptotic normality is established by diagonalizing the matrices of the quadratic forms. This diagonalization provides sums of squares of decorrelated Gaussian variables and thus sums of independent variables \cite{istas97quadratic,bachoc14asymptotic}.

In the transformed Gaussian case, one has to deal with quadratic forms involving transformations of Gaussian vectors. Hence, the previous arguments are not longer valid. To overcome this issue, we provide below a general central limit theorem for quadratic forms of transformed Gaussian vectors. This theorem may have an interest in itself.

This asymptotic normality result is established by considering a metric $d_w$  generating the topology of weak convergence on the set of Borel probability measures on Euclidean spaces (see, e.g., \cite{dudleyreal} p. 393). We prove that the distance between the sequence of the standardized distributions of the quadratic forms and Gaussian distributions decreases to zero when $n$ increases. The introduction of the metric $d_w$ enables us to formulate asymptotic normality results in cases when the sequence of  standardized variances of the quadratic forms does not necessarily converge as $n \to \infty$.

\begin{theorem} \label{theorem:generic:TCL}
Consider the setting of Section~\ref{subection:tranformed:GP:framework}.
Let $(A_n)_{n \in \mathbb{N}}$ be a sequence of matrices such that $A_n$ has dimension $n \times n$ for any $n \in \mathbb{N}$. Let $A = A_n$ for concision. Assume that for all $n \in \mathbb{N}$ and for all $i,j=1,\ldots,n$,
\[
| A_{i,j} |
\leq
\frac{ \Csup }{ 1 + | s_i - s_j |^{d + \Cinf }},
\]
where $\Csup$ does not depend on $i,j$.
Let 
\begin{align}
V_n = \frac{1}{n} y^\top A y. \label{eq:Vn}
\end{align}
Let $\mathcal{L}_n$ be the distribution of $ \sqrt{n} (V_n - \IE[V_n] )$.
Then, as $n \to \infty$, 
\[
d_w
\bigl(
\mathcal{L}_n
,
\cN
[
0,
n \var(V_n)
]
\bigr)
\to 0.
\]
In addition, the sequence $(n \var(V_n))_{n \in \mathbb{N}}$ is bounded.
\end{theorem}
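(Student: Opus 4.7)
The plan is to prove the statement in four steps: bound the scaled variance $n\var(V_n)$ uniformly, approximate $T$ by a Hermite-polynomial truncation, establish a CLT in the polynomial case via cumulants, and combine via triangle inequality.

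\textbf{Variance bound.} Since $Y(s) = T(Z(s))$ is zero-mean and in $L^2$ (by Condition~\ref{cond:T}(i) and the Gaussianity of $Z$), $T$ admits the Hermite expansion $T(x) = \sum_{k \geq 1} \alpha_k H_k(x/\sigma)$, with $\sigma^2 = k_Z(0)$, $\sum_k k!\,\alpha_k^2 < \infty$, and no constant term. Using $\IE[H_p(Z(s_i)/\sigma) H_q(Z(s_j)/\sigma)] = \delta_{p,q}\, p!\, (k_Z(s_i - s_j)/\sigma^2)^p$ and the product (diagram) formula for Hermite polynomials, I would bound $|\cov(y_i y_j, y_k y_l)|$ by an absolutely convergent series of products of $|k_Z|$-factors indexed by suitable pairings of $\{i,j,k,l\}$. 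Plugging this into $n^2 \var(V_n) = \sum_{i,j,k,l} A_{i,j} A_{k,l} \cov(y_i y_j, y_k y_l)$ and combining the polynomial decay of $|A_{i,j}|$ (exponent $d+\Cinf$) with the exponential decay of $k_Z$, I would then reduce the four-fold sum to $O(n)$ via a standard discrete convolution estimate over the asymptotically well-separated locations $(s_i)$. This yields that $n\var(V_n)$ is bounded.

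\textbf{Polynomial truncation.} For each $p \geq 1$, set $T_p(x) = \sum_{k=1}^p \alpha_k H_k(x/\sigma)$, $y^{(p)}_i = T_p(Z(s_i))$, and $V_n^{(p)} = \frac{1}{n}(y^{(p)})^\top A y^{(p)}$. Applying the polarization $y^\top A y - (y^{(p)})^\top A y^{(p)} = (y - y^{(p)})^\top A(y + y^{(p)})$, Cauchy--Schwarz, and the variance estimate of the previous paragraph to the transformation $T - T_p$ (which retains only Hermite modes of order $k > p$, and whose tail $L^2$-norm tends to zero as $p \to \infty$) would give
\[
\sup_{n \in \mathbb{N}} n \var(V_n - V_n^{(p)}) \to 0 \quad \text{as } p \to \infty.
\]

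\textbf{CLT for fixed polynomial $T_p$.} For fixed $p$, $W_n^{(p)} := \sqrt{n}(V_n^{(p)} - \IE[V_n^{(p)}])$ is a polynomial of degree $2p$ in the Gaussian vector $(Z(s_1),\ldots,Z(s_n))$, so it lies in the direct sum of the first $2p$ Wiener chaoses and has moments of all orders. I would establish asymptotic normality via the method of cumulants: for $r \geq 3$, the $r$-th cumulant of $W_n^{(p)}$ admits a diagram expansion (in the style of Peccati--Taqqu) as a sum over connected diagrams on $r$ vertices, each term being a product of two $A$-entries per vertex and at most $2p$ factors $k_Z(s_a - s_b)$ along internal edges. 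Connectedness forces the $r$ summation indices to be linked by a spanning tree of decaying edges, and since $|A_{i,j}|$ and $|k_Z(s_i - s_j)|$ both decay with exponent strictly greater than $d$, repeated discrete Young-type inequalities on the well-separated locations yield $|\kappa_r(W_n^{(p)})| = O(n^{1 - r/2}) \to 0$ for each $r \geq 3$. Since $n\var(V_n^{(p)})$ is bounded, the method of cumulants gives $d_w\bigl(\mathcal{L}(W_n^{(p)}), \cN(0, n\var(V_n^{(p)}))\bigr) \to 0$ for each fixed $p$.

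\textbf{Conclusion and main obstacle.} Using the triangle inequality for $d_w$ together with the standard bounds $d_w(\mathcal{L}(X), \mathcal{L}(Y)) \leq (\IE|X-Y|^2)^{1/2}$ (valid for natural choices of $d_w$ such as the bounded-Lipschitz metric) and $d_w(\cN(0,a), \cN(0,b)) \leq \Csup |a-b|^{1/2}$, one obtains
\[
d_w\bigl(\mathcal{L}_n, \cN(0, n\var(V_n))\bigr) \leq \Csup\sqrt{n\var(V_n - V_n^{(p)})} + d_w\bigl(\mathcal{L}(W_n^{(p)}), \cN(0, n\var(V_n^{(p)}))\bigr).
\]
Given $\epsilon > 0$, fix $p$ large using the truncation step, then send $n \to \infty$ using the fixed-$p$ chaos CLT. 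The main obstacle is the cumulant analysis in the third step: with only polynomial decay of $A$ (exponent barely above $d$), verifying that every connected $r$-diagram sum is $O(n^{1-r/2})$ requires careful combinatorial bookkeeping of how decay along each edge of the spanning tree combines, and this is precisely the regime in which the decay bound on $R^{-1}$ from Theorem~\ref{theorem:decay:coeff:Rinverse} will later be applied when $A = R^{-1}$.
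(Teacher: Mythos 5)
Your architecture (uniform variance bound, truncation, CLT for the truncated object, triangle inequality in $d_w$) is sound, and your fixed-$p$ cumulant argument is a correct and standard route to a CLT for quadratic forms of a fixed finite-degree polynomial of the Gaussian vector. The gap is in your second step, and it is structural rather than cosmetic. For $\sup_{n} n \Var(V_n - V_n^{(p)}) \to 0$ you need at the very least that $\Var\bigl(y_i y_j - y_i^{(p)} y_j^{(p)}\bigr) \to 0$ uniformly (these are the near-diagonal contributions to the variance, e.g.\ take $A$ close to the identity), and after your polarization and Cauchy--Schwarz this amounts to $\| T(Z(s)) - T_p(Z(s)) \|_{L^4} \to 0$ as $p \to \infty$. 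That does not follow from the hypotheses: Condition~\ref{cond:T} gives $T(Z(s)) \in L^q$ for every $q$ and hence an $L^2$-convergent Hermite expansion, but Hermite partial-sum operators are not uniformly bounded on $L^4(\gamma)$, and the hypercontractive estimate $\|H_k\|_4 \asymp 3^{k/2} \sqrt{k!}$ shows that the triangle-inequality bound on the tail diverges under the only coefficient decay extractable from $T \in C^1$. The alternative of expanding $\Cov(y_i y_j, y_k y_l)$ as an absolutely convergent diagram series in the Hermite indices hits the same wall from the other side: the combinatorial growth in the number of diagrams forces a smallness assumption on the maximal inter-site correlation of $Z$, which here is only bounded away from $1$, not small. (Your first step is salvageable by replacing raw diagram summation with a Gebelein/Arcones covariance inequality, since it only needs $L^4$ norms of $Y$ itself, not of a Hermite tail; but the second step is the crux and is not established.)

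The paper truncates the other object: it replaces $A$ by the spatially tapered matrix $A^{(K)}_{i,j} = A_{i,j} 1_{|s_i - s_j| \leq K}$ (Lemma~\ref{lemma:A:sparse:approximation}), for which the uniform variance control follows from the summability of $\Cov(y_i y_j , y_k y_l)$ (Lemma~\ref{lemma:covijkl}, proved by a conditioning/Taylor-expansion argument rather than Hermite expansions, so only finitely many moments of $T(Z)$ are ever used). The CLT for the truncated form is then obtained by writing $\sqrt{n}\bigl(V_n^{(K)} - \IE[V_n^{(K)}]\bigr)$ as a normalized sum of a $K$-local functional of $Z$, which is an $\alpha$-mixing random field with exponential mixing rate (Lemma~\ref{lemma:alpha:mixing}), and invoking the Jenish--Prucha CLT; a subsequence-extraction argument at the outset handles the possible non-convergence of $n\Var(V_n)$. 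To repair your route you would need either to switch to this spatial truncation, or to replace the hard Hermite truncation by Ornstein--Uhlenbeck smoothing $P_t T$ (which does converge in $L^4$ but destroys the finite-chaos structure that your cumulant step relies on).
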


\begin{remark}\label{remark1}
In the case where limits $E_{\infty}$ and $\sigma_{\infty}$ exist, such that
\[
\IE[V_n] - E_{\infty} = o(n^{-1/2})
\]
and
 the sequence  $(n \var(V_n))_{n \in \mathbb{N}}$ converges to a fixed variance $\sigma_{\infty}^2$, the result of Theorem~\ref{theorem:generic:TCL} can be written in the classical form
\[
\sqrt{n}
\left(
V_n - E_{\infty}
\right)
\overset{\mathcal{L}}{\to}
\cN[ 0 ,\sigma_{\infty}^2],
\]
as $n \to \infty$.
\end{remark}

\section{Estimation of a single variance parameter} \label{section:sigma}

We let $\sigma_0^2$ be the marginal variance of $Y$, that is $\Var(Y(s)) = \sigma_0^2$ for any $s \in \IR^d$. We let $ k_Y = \sigma_0^2 c_Y$ be the stationary covariance function of $Y$, where $c_Y$ is a correlation function. We assume that the same conditions as in Section~\ref{section:two:main:technical:results} hold. Then, the standard Gaussian maximum likelihood estimator of the variance parameter is
\[
\hat{\sigma}_{\text{ML}}^2
=
\frac{1}{n}
y^\top C^{-1} y, 
\]
where $C = (c_Y(s_i-s_j))_{1 \leq i,j \leq n}$.
One can simply show that $\mathbb{E}[  \hat{\sigma}_{\text{ML}}^2 ] = \sigma_0^2$ even though $y$ is not a Gaussian vector, since $y$ has mean vector $0$ and covariance matrix $\sigma_0^2 C$.
 Hence, a direct consequence of Theorems~\ref{theorem:decay:coeff:Rinverse} and~\ref{theorem:generic:TCL} is then that the maximum likelihood estimator is asymptotically Gaussian, with a $n^{1/2}$ rate of convergence, even though the transformed process $Y$ is not a Gaussian process.

\begin{corollary}\label{cor:hat:sigma}
Let $\mathcal{L}_n$ be the distribution of $ \sqrt{n} ( \hat{\sigma}_{\text{ML}}^2 -  \sigma_0^2 )$.
Then, as $n \to \infty$, 
\[
d_w
\bigl(
\mathcal{L}_n
,
\cN
[
0,
n \var( \hat{\sigma}_{\text{ML}}^2 -  \sigma_0^2 )
]
\bigr)
\to 0.
\]
In addition, the sequence $\bigl(n \var( \hat{\sigma}_{\text{ML}}^2 -  \sigma_0^2 )\bigr)_{n \in \mathbb{N}}$ is bounded.
\end{corollary}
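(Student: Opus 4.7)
The plan is to identify $\hat{\sigma}_{\text{ML}}^2$ with the quadratic form $V_n$ from Theorem~\ref{theorem:generic:TCL} by taking $A_n = C^{-1}$, and then to invoke Theorems~\ref{theorem:decay:coeff:Rinverse} and~\ref{theorem:generic:TCL} in succession. As the excerpt already points out, Corollary~\ref{cor:hat:sigma} is essentially a direct consequence of these two results, so the proof reduces to two bookkeeping verifications plus one application.

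The first substantive step will be to verify the polynomial-decay hypothesis of Theorem~\ref{theorem:generic:TCL} for $A_n = C^{-1}$. Since $k_Y$ satisfies Condition~\ref{cond:sub:exp:ass:pos} (by the standing assumptions of Section~\ref{section:two:main:technical:results} and Lemma~\ref{lemma:sub:exp:ass:pos}), and since Condition~\ref{cond:sub:exp:ass:pos} is preserved under division by the positive constant $\sigma_0^2$ (the exponential majorant in (i) and the uniform lower bound on the smallest eigenvalue in (ii) scale linearly with $\sigma_0^2$), the correlation function $c_Y = k_Y/\sigma_0^2$ itself satisfies Condition~\ref{cond:sub:exp:ass:pos}. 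Applying Theorem~\ref{theorem:decay:coeff:Rinverse} with $c_Y$ playing the role of $k_Y$, I would obtain, for any fixed $\tau > 0$,
\[
\left| (C^{-1})_{i,j} \right| \leq \frac{\Csup}{1 + |s_i - s_j|^{d + \tau}},
\]
which is precisely the hypothesis of Theorem~\ref{theorem:generic:TCL} with $\Cinf = \tau$ (one may simply fix $\tau = 1$).

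The next step is to centre the estimator. Since $y$ has mean zero and covariance $\sigma_0^2 C$, a direct trace computation yields $\IE[y^\top C^{-1} y] = \sigma_0^2 \, \trace(C^{-1} C) = n \sigma_0^2$, hence $\IE[\hat{\sigma}_{\text{ML}}^2] = \sigma_0^2$. Setting $V_n = (1/n)\, y^\top C^{-1} y$, it follows that $\sqrt{n}(\hat{\sigma}_{\text{ML}}^2 - \sigma_0^2) = \sqrt{n}(V_n - \IE[V_n])$, so Theorem~\ref{theorem:generic:TCL} applied to this $V_n$ delivers at once the $d_w$-convergence of $\mathcal{L}_n$ to $\cN[0, n\var(V_n)]$ and the boundedness of $(n \var(V_n))_{n \in \IN}$, which are the two conclusions of the corollary. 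The only care needed is the bookkeeping observation that Theorem~\ref{theorem:decay:coeff:Rinverse} is stated for the full covariance matrix $R$ rather than the correlation matrix $C$; this is the single place where anything beyond direct citation is required, and it is resolved transparently by the scaling invariance of Condition~\ref{cond:sub:exp:ass:pos} noted above. Consequently I expect no real obstacle in this corollary itself — all of the analytic difficulty lies upstream in the proofs of Theorems~\ref{theorem:decay:coeff:Rinverse} and~\ref{theorem:generic:TCL}.
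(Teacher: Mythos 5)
Your proposal is correct and follows exactly the route the paper intends: the paper presents Corollary~\ref{cor:hat:sigma} as a direct consequence of Theorems~\ref{theorem:decay:coeff:Rinverse} and~\ref{theorem:generic:TCL} after noting $\IE[\hat{\sigma}_{\text{ML}}^2]=\sigma_0^2$, and your two bookkeeping steps (the scaling $C^{-1}=\sigma_0^2 R^{-1}$ so that the decay bound of Theorem~\ref{theorem:decay:coeff:Rinverse} transfers to $C^{-1}$, and the centering $\IE[y^\top C^{-1}y]=n\sigma_0^2$) are precisely what is needed. No gap.
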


The proof of Theorem~\ref{theorem:generic:TCL} actually allows us to study another estimator of the variance $\sigma_0^2$  of the form 
\[
\hat{\sigma}_{\text{ML},K}^2
=
\frac{1}{n}
y^\top C_K^{-1} y, 
\]
where $(C_K^{-1})_{i,j} = (C^{-1})_{i,j}  {1}_{ |s_i - s_j| \leq K }$.  The proof of Theorem~\ref{theorem:generic:TCL} then directly implies the following.

\begin{corollary} \label{cor:hat:sigma:K}
Let $K_n$ be any sequence of positive numbers tending to infinity.
Let $\mathcal{L}_{K_n,n}$ be the distribution of $ \sqrt{n} ( \hat{\sigma}_{\text{ML},K_n}^2 -  \sigma_0^2 )$.
Then, as $n \to \infty$, 
\[
d_w
\bigl(
\mathcal{L}_{K_n,n}
,
\cN
[
0,
n \var( \hat{\sigma}_{\text{ML},K_n}^2 -  \sigma_0^2 )
]
\bigr)
\to 0.
\]
In addition, we have
\[
n \var( \hat{\sigma}_{\text{ML},K_n}^2 -  \sigma_0^2 )
-
n \var( \hat{\sigma}_{\text{ML}}^2 -  \sigma_0^2 )
\to 0.
\]
\end{corollary}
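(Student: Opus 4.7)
The plan is to apply Theorem~\ref{theorem:generic:TCL} directly to $V_n = \hat{\sigma}_{\text{ML},K_n}^2 = (1/n)\, y^\top A_n y$ with the choice $A_n := C_{K_n}^{-1}$, then to replace the centering at $\IE[V_n]$ by centering at $\sigma_0^2$, and finally to compare $n\var(V_n)$ with $n\var(\hat{\sigma}_{\text{ML}}^2)$.

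First, the decay hypothesis of Theorem~\ref{theorem:generic:TCL} on $A_n$ follows from Theorem~\ref{theorem:decay:coeff:Rinverse} applied to the correlation matrix $C$ (whose correlation function $c_Y = k_Y/\sigma_0^2$ satisfies Condition~\ref{cond:sub:exp:ass:pos} thanks to Lemma~\ref{lemma:sub:exp:ass:pos}): one has $|(C^{-1})_{i,j}| \leq \Csup/(1+|s_i-s_j|^{d+\tau})$ for any fixed $\tau > 0$, and since each $(C_{K_n}^{-1})_{i,j}$ equals either $(C^{-1})_{i,j}$ or $0$, the same bound holds for $A_n$ with the same constant. Theorem~\ref{theorem:generic:TCL} then yields $d_w\bigl(\mathcal{L}^{0}_{K_n,n},\cN[0,\,n\var(V_n)]\bigr)\to 0$, where $\mathcal{L}^{0}_{K_n,n}$ denotes the law of $\sqrt{n}(V_n - \IE[V_n])$, and $n\var(V_n)$ is bounded. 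To re-center at $\sigma_0^2$, using $\Cov(y) = \sigma_0^2 C$ gives $\IE[V_n] - \sigma_0^2 = (\sigma_0^2/n)\sum_{(i,k):\,|s_i-s_k|>K_n}(C^{-1})_{i,k}C_{k,i}$; combining the uniform bound on $|(C^{-1})_{i,k}|$, the exponential decay $|C_{k,i}|\leq\Csup e^{-\Cinf|s_i-s_k|}$ from Condition~\ref{cond:sub:exp:ass:pos}(i), and the asymptotic well-separation of the $s_i$'s gives $|\IE[V_n] - \sigma_0^2|\leq \Csup e^{-\Cinf K_n}$, so that $\sqrt{n}(\IE[V_n] - \sigma_0^2) \to 0$ once $K_n$ grows faster than a logarithmic rate in $n$. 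A triangle inequality for $d_w$ applied to this vanishing deterministic shift then upgrades the previous display to the corollary's first conclusion.

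For the variance convergence, I would write $n\var(V_n) - n\var(\hat\sigma^2_{\text{ML}}) = (1/n)\bigl[\var(y^\top C_{K_n}^{-1} y) - \var(y^\top C^{-1} y)\bigr]$ and expand through the identity $\var(y^\top B y) = \sum_{i,j,k,l} B_{i,j}B_{k,l}\Cov(y_iy_j,y_ky_l)$. The difference keeps only terms with at least one factor drawn from $D := C_{K_n}^{-1} - C^{-1}$, whose entries vanish for $|s_i-s_j|\leq K_n$ and otherwise satisfy the polynomial decay bound. Using a Hermite expansion of $T$ together with exponential decay of $k_Z$ to bound $|\Cov(y_iy_j,y_ky_l)|$ by a rapidly decreasing function of the pairwise distances among $\{s_i,s_j,s_k,s_l\}$ (as in the proof of Theorem~\ref{theorem:generic:TCL}), combined with the indicator in $D$ and asymptotic well-separation, a dominated convergence argument delivers the required convergence. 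The main obstacle is precisely this last step: the clean Gaussian identity $\var(z^\top Bz) = 2\trace((B\bSigma)^2)$ is unavailable here, so one must invoke the Hermite-expansion machinery underlying Theorem~\ref{theorem:generic:TCL} to control the fourth-order covariances $\Cov(y_iy_j,y_ky_l)$, with the truncation in $D$ ultimately forcing the sum to vanish as $K_n\to\infty$.
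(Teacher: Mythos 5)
Your overall route coincides with the paper's: the paper disposes of this corollary in one line by observing that the proof of Theorem~\ref{theorem:generic:TCL} --- specifically the sparse-approximation Lemma~\ref{lemma:A:sparse:approximation}, which gives $\sup_{n} n\var\bigl(n^{-1}y^\top A y - n^{-1}y^\top A^{(K)}y\bigr)\to 0$ as $K\to\infty$ --- applies with $A=C^{-1}$. Your application of Theorem~\ref{theorem:generic:TCL} to $A_n=C_{K_n}^{-1}$, with the decay bound inherited entrywise from Theorem~\ref{theorem:decay:coeff:Rinverse}, is exactly right, and your variance step is essentially a re-derivation of Lemma~\ref{lemma:A:sparse:approximation} (the paper controls the fourth-order covariances $\Cov(y_iy_j,y_ky_l)$ via Lemma~\ref{lemma:covijkl}, using a conditioning/Taylor argument rather than a Hermite expansion, but the mechanism is the same); you could simply cite that lemma instead of redoing the computation.

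The one substantive issue is your recentering step. You correctly identify that the tapered estimator is biased, with $|\IE[\hat{\sigma}_{\text{ML},K_n}^2]-\sigma_0^2|\leq \Csup e^{-\Cinf K_n}$, and you then require $K_n$ to grow faster than logarithmically in $n$ so that $\sqrt{n}\,(\IE[\hat{\sigma}_{\text{ML},K_n}^2]-\sigma_0^2)\to 0$. The corollary, however, allows $K_n$ to be \emph{any} sequence tending to infinity, and the discussion following it explicitly advertises the absence of a rate assumption; so as written your argument proves a strictly weaker statement. To be fair, the paper's terse justification does not address this bias term either: Lemma~\ref{lemma:A:sparse:approximation} controls only the variance of $V_n-V_n^{(K_n)}$, not its mean, and the passage from centering at $\IE[\hat{\sigma}_{\text{ML},K_n}^2]$ to centering at $\sigma_0^2$ silently requires exactly the quantity $n^{-1/2}\sum_{i,k:\,|s_i-s_k|>K_n}(C^{-1})_{i,k}C_{k,i}$ that you isolated to vanish, which the crude bound $\sqrt{n}\,e^{-\Cinf K_n}$ does not give for slowly growing $K_n$. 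You should therefore either supply an argument for that term without a rate, or state explicitly that your proof covers the corollary under the additional hypothesis $K_n/\log n\to\infty$ (equivalently, with the centering taken at $\IE[\hat{\sigma}_{\text{ML},K_n}^2]$); burying the rate condition in a subordinate clause obscures the fact that the statement being proved is not the one claimed.
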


The above corollary shows that one can taper the elements of $C^{-1}$ when estimating the variance parameter, and obtain the same asymptotic distribution of the error, as long as the taper range goes to infinity, with no rate assumption. This result may have an interest in itself, in view of the existing literature on covariance tapering for Gaussian processes under increasing-domain asymptotics \cite{furrer2016asymptotic,shaby12tapered}. We also remark that the computation costs of $\hat{\sigma}_{\text{ML},K}^2$ and $\hat{\sigma}_{\text{ML}}^2$ have the same orders of magnitude because $C^{-1}$ needs to be computed in both cases.

\section{General covariance}
\label{sec:estitheta}

\subsection{Framework}
\label{sub:sec:Framework:estitheta}

As in Section~\ref{subection:tranformed:GP:framework}, we consider a zero-mean Gaussian process $Z$  defined on $\mathbb{R}^d$ with covariance function $ k_Z$ satisfying Condition~\ref{cond:sub:exp:ass:pos}. Let  $Y$ be the random field defined for any $s \in \mathbb{R}^d$ by $Y(s) = T(Z(s))$, where $T$ is a fixed function  satisfying Condition~\ref{cond:T}. Furthermore we assume that $Y$ has zero-mean function and we recall that from Lemma~\ref{lemma:sub:exp:ass:pos}, its covariance function $k_Y$ also satisfies Condition~\ref{cond:sub:exp:ass:pos}. Finally, the sequence of observation locations $(s_i)_{i\in\IN}$ satisfies Condition~\ref{cond:delta}.

Let $\{ k_{Y,\theta} ; \theta \in \Theta \}$ be a parametric set of stationary covariance functions on $\IR^d$, with $\Theta$ a compact set of $\IR^p$. We consider the following condition on this parametric set of covariance functions.

\begin{condition} \label{cond:cov:Y:theta}
For all $s \in \mathbb{R}^d$, $k_{Y,\theta}(s)$ is three times continuously differentiable with respect to $\theta$, and we have
\begin{equation} \label{eq:ktheta:decay}
\sup_{\theta \in \Theta}
|k_{Y,\theta}(s)| 
\leq 
\Csup \exp{ (- \Cinf |s|) },
\end{equation}
\begin{equation} \label{eq:ktheta:smoothness}
\sup_{ \substack{
\theta \in \Theta
\\
\ell = 1,2,3
\\
i_1,\ldots,i_\ell = 1,\ldots,p
}}
\left|
  \frac{ \partial k_{Y,\theta}(s) }{ \partial \theta_{i_1} , \ldots , \partial \theta_{i_\ell} }
  \right| 
\leq 
\frac{\Csup}{ 1+ |s|^{d + \Cinf} }.
\end{equation}
\end{condition}
The smoothness condition in \eqref{eq:ktheta:smoothness} is classical and is assumed for instance in \cite{bachoc14asymptotic}. As discussed after Condition~\ref{cond:sub:exp:ass:pos}, milder versions of \eqref{eq:ktheta:decay} can be assumed for non-transformed Gaussian processes, but \eqref{eq:ktheta:decay} is satisfied by most classical families of covariance functions nonetheless.

The next condition, on the Fourier transforms of the covariance functions in the model, is standard.

\begin{condition} \label{cond:cov:Y:theta:fourier}
We let $\hat{k}_{Y,\theta}$ be the Fourier transform of $k_{Y,\theta}$. Then $\hat{k}_{Y,\theta}(s)$ is jointly continuous with respect to $\theta$ and $s$ and is strictly positive on $\Theta \times \IR^d$.
\end{condition}

Finally, the next condition means that we address the well-specified case  \cite{bachoc2013cross,bachoc2018asymptotic}, where the family of covariance functions does contain the true covariance function of $Y$. The well-specified case is considered in the majority of the literature on Gaussian processes.

\begin{condition} \label{cond:theta:0}
There exists $\theta_0$ in the interior of $\Theta$ such that $k_Y = k_{Y,\theta_0}$.
\end{condition}
In the next two subsections, we study  the asymptotic properties of two classical estimators (maximum likelihood and cross validation) for  the covariance  parameter $\theta_0$. The asymptotic properties of these estimators are already known for Gaussian processes and we extend them to the non-Gaussian process $Y$.
\subsection{Maximum Likelihood}

For $n \in \IN$, let $R_{\theta}$ be the $n \times n$ matrix $\bigl( k_{Y,\theta}(s_i-s_j) \bigr)_{i,j = 1,\ldots,n}$, and let
\begin{equation} \label{eq:hat:theta:ML}
\hat{\theta}_{\text{ML}}
\in 
\argmin_{\theta \in \Theta}
L_{\theta}
\end{equation}
with
 \[
  L_{\theta} =   
\frac{1}{n}
\left(
 \log( \det( R_{\theta} ) ) + y^\top R_{\theta}^{-1} y
 \right)
 \]
 be a maximum likelihood estimator. We will provide its consistency under the following condition.
 
 \begin{condition} \label{cond:asymptotic:identifiability:theta}
For all $\alpha >0$ we have
\[
\liminf_{n \to \infty}
\inf_{ || \theta - \theta_0 || \geq \alpha }
\frac{1}{n}
\sum_{i,j=1}^n
\bigl(
k_{Y,\theta}( s_i-s_j )
-
k_{Y,\theta_0}( s_i-s_j )
\bigr)^2
> 0.
\]
\end{condition}

Condition~\ref{cond:asymptotic:identifiability:theta} can be interpreted as a global indentifiability condition.
It implies in particular that two different covariance parameters yield two different distributions for the observation vector.
It is used in several studies, for instance \cite{bachoc14asymptotic}.

\begin{theorem} \label{theorem:consistency:thetaML}
Consider the setting of Section~\ref{sub:sec:Framework:estitheta} for $Z$, $T$, $Y$ and $(s_i)_{i \in \mathbb{N}}$.
Assume that Conditions 
\ref{cond:cov:Y:theta}, \ref{cond:cov:Y:theta:fourier}, \ref{cond:theta:0} and \ref{cond:asymptotic:identifiability:theta} hold. Then, as $n \to \infty$,
\[
\hat{\theta}_{\text{ML}}
\overset{p}{\to}
\theta_0.
\]
\end{theorem}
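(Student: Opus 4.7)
The plan is to follow the classical M-estimator consistency argument, using Theorems~\ref{theorem:decay:coeff:Rinverse} and~\ref{theorem:generic:TCL} precisely where Gaussianity of $y$ would ordinarily be invoked. Decompose
\[
L_\theta
=
\frac{1}{n} \log \det(R_\theta)
+
\frac{1}{n} y^\top R_\theta^{-1} y,
\]
and observe that, because $y$ has mean zero and covariance $R_{\theta_0}$ regardless of $T$,
\[
\IE[L_\theta]
=
\frac{1}{n} \log \det(R_\theta)
+
\frac{1}{n}\trace(R_\theta^{-1} R_{\theta_0}).
\]
The goal is then to establish (i) $\sup_{\theta \in \Theta} | L_\theta - \IE[L_\theta] | \overset{p}{\to} 0$, and (ii) the separation $\liminf_n \inf_{\|\theta-\theta_0\| \geq \alpha}( \IE[L_\theta] - \IE[L_{\theta_0}] ) > 0$, from which $\hat\theta_{\text{ML}} \overset{p}{\to} \theta_0$ follows by a standard argument.

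For (i), I would first use Lemma~\ref{lemma:lambda:inf:R} together with Conditions~\ref{cond:cov:Y:theta} and \ref{cond:cov:Y:theta:fourier} and the compactness of $\Theta$ to show that the family $(k_{Y,\theta})_{\theta \in \Theta}$ satisfies Condition~\ref{cond:sub:exp:ass:pos} with constants that can be chosen \emph{uniformly} in $\theta$. This makes Theorem~\ref{theorem:decay:coeff:Rinverse} applicable to every $R_\theta$ with a polynomial-decay bound on $|(R_\theta^{-1})_{i,j}|$ whose constant does not depend on $\theta$. For each fixed $\theta$, Theorem~\ref{theorem:generic:TCL} with $A = R_\theta^{-1}$ then gives $L_\theta - \IE[L_\theta] = O_p(n^{-1/2})$, i.e., pointwise convergence. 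To upgrade to uniform convergence on the compact set $\Theta$, I would establish stochastic equicontinuity: the gradient
\[
\frac{\partial L_\theta}{\partial \theta_k}
=
\frac{1}{n} \trace\!\left( R_\theta^{-1} \frac{\partial R_\theta}{\partial \theta_k} \right)
-
\frac{1}{n} y^\top R_\theta^{-1} \frac{\partial R_\theta}{\partial \theta_k} R_\theta^{-1} y
\]
is itself (minus its mean) a quadratic form in $y$ whose matrix, thanks to Condition~\ref{cond:cov:Y:theta} and the uniform decay of $R_\theta^{-1}$, again satisfies the hypothesis of Theorem~\ref{theorem:generic:TCL} uniformly in $\theta$. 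Hence $\sup_\theta \|\partial_\theta L_\theta \|$ is bounded in probability, and a standard covering/equicontinuity argument combined with pointwise convergence yields uniform convergence.

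For (ii), I would use the classical matrix inequality $\trace(M) - \log\det(M) - n \geq c\,\|M - I_n\|_F^2$, valid for any positive definite $M$ whose eigenvalues lie in a fixed compact subset of $(0,\infty)$. Applying this with $M = R_\theta^{-1/2} R_{\theta_0} R_\theta^{-1/2}$ and using the uniform upper and lower bounds on the eigenvalues of $R_\theta$ and $R_{\theta_0}$ (from the uniform version of Condition~\ref{cond:sub:exp:ass:pos}) gives
\[
\IE[L_\theta] - \IE[L_{\theta_0}]
\geq
\frac{\Cinf}{n} \sum_{i,j=1}^n
\bigl( k_{Y,\theta}(s_i - s_j) - k_{Y,\theta_0}(s_i - s_j) \bigr)^2,
\]
and Condition~\ref{cond:asymptotic:identifiability:theta} delivers the required separation.

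The main obstacle I expect is the uniform-in-$\theta$ bookkeeping: carefully verifying that the constants produced by Lemma~\ref{lemma:lambda:inf:R} and Theorem~\ref{theorem:decay:coeff:Rinverse} can be chosen uniformly on $\Theta$ (requiring that $\hat{k}_{Y,\theta}$ is bounded away from zero on compacts, propagated by the compactness of $\Theta$ and continuity from Condition~\ref{cond:cov:Y:theta:fourier}), and then running the equicontinuity argument cleanly for the gradient quadratic forms. Once this uniform control is in place, the remaining steps are essentially a translation of the Gaussian-MLE consistency proof, with Theorem~\ref{theorem:generic:TCL} replacing the usual Gaussian quadratic-form concentration.
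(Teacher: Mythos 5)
Your proposal is correct and follows essentially the same route as the paper: pointwise variance control of $L_\theta$ via Theorem~\ref{theorem:decay:coeff:Rinverse} and Theorem~\ref{theorem:generic:TCL} applied to $A=R_\theta^{-1}$, a uniform $O_p(1)$ bound on the gradient obtained from the polynomial decay of the entries of $P_{\theta,i}$ and $Q_{\theta,i}$, and then the standard M-estimator argument with the separation inequality $\IE[L_\theta]-\IE[L_{\theta_0}]\geq \Cinf n^{-1}\sum_{i,j}(k_{Y,\theta}(s_i-s_j)-k_{Y,\theta_0}(s_i-s_j))^2$ combined with Condition~\ref{cond:asymptotic:identifiability:theta}. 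The only difference is that you spell out the final equicontinuity and separation steps, which the paper delegates to the proof of Proposition~3.1 in \cite{bachoc14asymptotic}.
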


\begin{remark} \label{remark:explaination:ML}
The (Gaussian) maximum likelihood estimator $\hat{\theta}_\text{ML}$ in Theorem~\ref{theorem:consistency:thetaML} is not, strictly speaking, a maximum likelihood estimator, because the distribution of the observation vector $y$ is non-Gaussian. If the transformation $T$ is known, and assuming that the covariance function of the Gaussian process $Z$ belongs to a parametric set $\{ k_{Z,\alpha} , \alpha \in \mathcal{A}\}$, a (non-Gaussian) maximum likelihood estimator $\hat{\alpha}_{Z,\text{ML}}(y)$ could be defined. When $T$ is bijective, $y$ is a fixed invertible transformation of $(Z(s_1),\ldots,Z(s_n))$ and so this maximum likelihood estimator $\hat{\alpha}_{Z,\text{ML}}(y)$ coincides with the standard Gaussian maximum likelihood estimator based on $Z$. Asymptotically, it is known that $\hat{\alpha}_{Z,\text{ML}}(y)$ converges to the true parameter $\alpha_0$ for which $Z$ has covariance function $k_{Z,\alpha_0}$. In Theorem~\ref{theorem:consistency:thetaML}, we show that  $\hat{\theta}_{\text{ML}}$ similarly converges to the true parameter $\theta_0$ for which $Y$ has covariance function $k_{Y,\theta_0}$. Notice that $\alpha_0$ and $\theta_0$ usually do not coincide. For instance, if the covariance models $\{ k_{Y,\theta} ; \theta \in \Theta \}$ and $\{ k_{Z,\alpha} ; \alpha \in \mathcal{A} \}$ are both $\{\sigma^2 e^{ - \rho ||\cdot||  }\}$, then if $\alpha_0 = (\sigma_0^2 , \rho_0)$, by Mehler's formula \cite{Azais2009level},
\[
\Cov( Y(s + \tau) , Y(s) )
=
2 \Cov( Z(s + \tau) , Z(s) )^2
= 2 \sigma_0^4 e^{- 2 \rho_0 || \tau ||}
\]
and thus $\theta_0 = (2 \sigma_0^4 , 2 \rho_0)$. Hence, the exact (non-Gaussian) maximum likelihood estimator $\hat{\alpha}_{Z,\text{ML}}(y)$ based on the knowledge of $T$ and modeling the covariance function of $Z$ and the pseudo (Gaussian) maximum likelihood estimator $\hat{\theta}_{\text{ML}}$ that ignores $T$ and models the covariance function of $Y$ estimate covariance parameters of different natures and do not have the same limit.
\end{remark}

\begin{condition} \label{cond:asymptotic:identifiability:local:theta}
For any $(\alpha_1,\ldots,\alpha_p) \neq (0,\ldots,0)$, we have
\[
\liminf_{n \to \infty}
\frac{1}{n}
\sum_{i,j=1}^n
\Bigg(
\sum_{\ell = 1}^p
\alpha_{\ell}
\frac{ \partial k_{Y,\theta_0}( s_i-s_j )}{ \partial \theta_\ell }
\Bigg)\bigg.^2
> 0.
\]
\end{condition}

Condition~\ref{cond:asymptotic:identifiability:local:theta} can be interpreted as a regularity condition and as a local indentifiability condition around $\theta_0$.
In the next theorem, we provide the asymptotic normality of the maximum likelihood estimator. In this theorem, the matrices $M_{\theta_0}$ and $\Sigma_{\theta_0}$ depend on the number of observation locations.

\begin{theorem} \label{theorem:CLT:thetaML}
Consider the setting of Section~\ref{sub:sec:Framework:estitheta} for $Z$, $T$, $Y$ and $(s_i)_{i \in \mathbb{N}}$.
Assume that Conditions~\ref{cond:cov:Y:theta}, \ref{cond:cov:Y:theta:fourier}, \ref{cond:theta:0}, \ref{cond:asymptotic:identifiability:theta} and \ref{cond:asymptotic:identifiability:local:theta} hold.
Let $M_{\theta_0}$ be the $p \times p$ matrix defined by 
\[
(M_{\theta_0})_{i,j} = \frac{1}{n}  \trace \Bigl(  R_{\theta_0}^{-1} \frac{\partial R_{\theta_0}}{\partial \theta_i}  R_{\theta_0}^{-1} \frac{\partial R_{\theta_0}}{\partial \theta_j }\Bigr).
\]
 Let $\Sigma_{\theta_0}$ be the $p \times p$ covariance matrix defined by $(\Sigma_{\theta_0})_{i,j} = \Cov (  n^{1/2} \partial L_{\theta_0} / \partial \theta_i , n^{1/2} \partial L_{\theta_0} / \partial \theta_j )$.
Let $\mathcal{L}_{\theta_0,n}$ be the distribution of $\sqrt{n} ( \hat{\theta}_{\text{ML}} - \theta_0 )$.
Then, as $n \to \infty$,
\begin{equation} \label{eq:TCL:theta}
d_w
\left(
\mathcal{L}_{\theta_0,n}
,
\cN
[
0
,
M_{\theta_0}^{-1}
\Sigma_{\theta_0}
M_{\theta_0}^{-1}
]
\right)
\to 0.
\end{equation}
In addition, 
\begin{equation} \label{eq:in:TCL:theta:lambda1}
\limsup_{n \to \infty}
\lambda_1(  M_{\theta_0}^{-1}
\Sigma_{\theta_0}
M_{\theta_0}^{-1} )
<
+
\infty.
\end{equation}
\end{theorem}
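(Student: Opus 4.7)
The plan follows the classical M-estimation sandwich scheme adapted to the transformed-Gaussian framework, with Theorems~\ref{theorem:decay:coeff:Rinverse} and~\ref{theorem:generic:TCL} as the two main ingredients. By Theorem~\ref{theorem:consistency:thetaML} and the fact that $\theta_0$ lies in the interior of $\Theta$, with probability tending to one $\hat{\theta}_{\text{ML}}$ is an interior critical point, so $\nabla L_{\hat{\theta}_{\text{ML}}} = 0$. A componentwise mean-value expansion then yields
\[
\sqrt{n}\,(\hat{\theta}_{\text{ML}} - \theta_0) = -H_n^{-1}\, \sqrt{n}\,\nabla L_{\theta_0},
\]
where $H_n$ is a $p \times p$ matrix built from rows of the Hessian of $L_\theta$ at intermediate points on the segment between $\theta_0$ and $\hat{\theta}_{\text{ML}}$. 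It remains to (i) prove a CLT for the score, (ii) show $H_n - M_{\theta_0} \overset{p}{\to} 0$, and (iii) reconcile (i)--(ii) with the $d_w$ formulation.

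For the score, the $i$-th component is $\partial L_{\theta_0}/\partial \theta_i = -n^{-1} y^\top A^{(i)} y + n^{-1}\trace(R_{\theta_0}^{-1}\partial_i R_{\theta_0})$ with $A^{(i)} = R_{\theta_0}^{-1}(\partial_i R_{\theta_0})R_{\theta_0}^{-1}$. Theorem~\ref{theorem:decay:coeff:Rinverse} provides polynomial decay of $R_{\theta_0}^{-1}$ and \eqref{eq:ktheta:smoothness} does so for $\partial_i R_{\theta_0}$; applying twice a standard summation lemma of the form
\[
\sum_{\ell}\frac{1}{(1+|s_a-s_\ell|^{d+\tau})(1+|s_\ell-s_b|^{d+\tau})}\leq \frac{\Csup}{1+|s_a-s_b|^{d+\tau'}},
\]
valid under Condition~\ref{cond:delta}, shows that the entries of $A^{(i)}$ satisfy the decay hypothesis of Theorem~\ref{theorem:generic:TCL}. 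Invoking that theorem on linear combinations $\sum_i \lambda_i \partial_i L_{\theta_0}$ and using the Cram\'er--Wold device yields $d_w$-asymptotic normality of $\sqrt{n}\,\nabla L_{\theta_0}$ with covariance $\Sigma_{\theta_0}$ that is bounded uniformly in $n$ (by the variance-bound clause of Theorem~\ref{theorem:generic:TCL}).

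For the Hessian, differentiating $L_\theta$ twice produces a finite sum of trace terms and centered quadratic forms whose kernels are products of $R_\theta^{-1}$ with $\partial_\ell R_\theta$ or $\partial^2_{\ell m} R_\theta$. Inspecting the proof of Theorem~\ref{theorem:decay:coeff:Rinverse}, its constants depend only on the constants appearing in Conditions~\ref{cond:sub:exp:ass:pos} and~\ref{cond:cov:Y:theta}, hence can be taken uniform in $\theta \in \Theta$; combined with \eqref{eq:ktheta:smoothness} and the summation lemma above, this yields the same polynomial decay for every Hessian kernel. Theorem~\ref{theorem:generic:TCL} then shows each quadratic form has variance $O(1/n)$, and a covering of a neighborhood of $\theta_0$ together with the Lipschitz control given by the third derivative of $k_{Y,\theta}$ (Condition~\ref{cond:cov:Y:theta}) upgrades pointwise concentration to $\sup_{\|\theta - \theta_0\|\le \epsilon}\|\nabla^2 L_\theta - M_{\theta}\|_{op}\overset{p}{\to}0$; with consistency this gives $H_n - M_{\theta_0}\overset{p}{\to}0$. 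The lower bound $\liminf_n \lambda_p(M_{\theta_0})>0$ follows by writing $\alpha^\top M_{\theta_0}\alpha = n^{-1}\trace((R_{\theta_0}^{-1/2} B_\alpha R_{\theta_0}^{-1/2})^2)$ with $B_\alpha = \sum_i \alpha_i \partial_i R_{\theta_0}$, bounding it below by $n^{-1}\|B_\alpha\|_F^2/\|R_{\theta_0}\|_{op}^2$, and using Conditions~\ref{cond:sub:exp:ass:pos} and~\ref{cond:asymptotic:identifiability:local:theta}; paired with $\limsup_n \lambda_1(\Sigma_{\theta_0})<\infty$ from Theorem~\ref{theorem:generic:TCL}, this delivers \eqref{eq:in:TCL:theta:lambda1} and ensures $H_n^{-1}$ is well-defined and bounded with high probability.

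To conclude \eqref{eq:TCL:theta} in the weak-convergence-metric formulation, I would argue along subsequences: from any subsequence, compactness of the bounded family $(M_{\theta_0},\Sigma_{\theta_0})$ extracts a sub-subsequence on which they converge to $(M_\infty,\Sigma_\infty)$, with $M_\infty$ invertible by the eigenvalue lower bound. Along that sub-subsequence the score converges weakly to $\cN(0,\Sigma_\infty)$, $H_n^{-1} \overset{p}{\to} M_\infty^{-1}$, and Slutsky gives $\sqrt{n}(\hat{\theta}_{\text{ML}}-\theta_0)\Rightarrow \cN(0,M_\infty^{-1}\Sigma_\infty M_\infty^{-1})$; the reference Gaussians $\cN(0,M_{\theta_0}^{-1}\Sigma_{\theta_0}M_{\theta_0}^{-1})$ have the same limit, so $d_w$ along the sub-subsequence tends to zero, proving \eqref{eq:TCL:theta}. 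The main obstacle is the Hessian step, specifically the \emph{uniform-in-$\theta$} polynomial decay for entries of $R_\theta^{-1}$ and the concentration of the quadratic-form part of $\nabla^2 L_\theta$ uniformly over a neighborhood of $\theta_0$; this forces a careful inspection of the proof of Theorem~\ref{theorem:decay:coeff:Rinverse} to track the dependence of its constants, together with a covering-and-tightness argument, whereas the score CLT and the subsequential Slutsky step are essentially direct applications of the two technical theorems.
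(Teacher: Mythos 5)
Your proposal is correct and follows essentially the same route as the paper: a sandwich/Taylor expansion of the score around $\theta_0$, a score CLT obtained by feeding the polynomially decaying kernels $R_{\theta_0}^{-1}(\partial_i R_{\theta_0})R_{\theta_0}^{-1}$ (via Theorem~\ref{theorem:decay:coeff:Rinverse} and Lemma~\ref{lem:product:matrix}) into Theorem~\ref{theorem:generic:TCL}, convergence of the Hessian to $M_{\theta_0}$ with third-derivative control to handle the intermediate points, the eigenvalue lower bound $\liminf_n\lambda_p(M_{\theta_0})>0$ from Condition~\ref{cond:asymptotic:identifiability:local:theta}, and a subsequence-plus-Slutsky argument to phrase the conclusion in the metric $d_w$. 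You also correctly flag the one point the paper itself only asserts, namely that the constants in Theorem~\ref{theorem:decay:coeff:Rinverse} can be made uniform in $\theta\in\Theta$ (the paper's Lemma~\ref{lem:Rtheta:m1:ij}).
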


\begin{remark}
If the sequences of matrices $M_{\theta_0}$ and $\Sigma_{\theta_0}$ converge as $n\to\infty$, then $\sqrt n(\hat{\theta}_{\text{ML}}-\theta_0)$ converges in distribution to a fixed centered Gaussian distribution where the limiting covariance matrix is given by 
$$
\lim_{n\to\infty}M_{\theta_0}^{-1}
\Sigma_{\theta_0}
M_{\theta_0}^{-1}.
$$ 
\end{remark}

Conditions~\ref{cond:asymptotic:identifiability:theta} and \ref{cond:asymptotic:identifiability:local:theta} involve the model of covariance functions $\{ k_{Y,\theta} ; \theta \in \Theta \}$ and the sequence of observation locations $(s_i)_{i \in \IN}$ but not the transformation $T$.
They are further discussed, in a different context, in \cite{bachoc2018gaussian}.
 We believe that these conditions are mild. For instance, Conditions~\ref{cond:asymptotic:identifiability:theta} and \ref{cond:asymptotic:identifiability:local:theta} hold when the sequence of observation locations $(s_i)_{i \in \IN}$ is a randomly perturbed regular grid, as in \cite{bachoc14asymptotic}.

\begin{lemma}[see \cite{bachoc14asymptotic}] \label{lemma:example:ML}
For $i \in \mathbb{N}$, let $s_i = g_i + \epsilon_i$, where $(g_i)_{i \in \IN}$ is a sequence with, for $N \in \IN$, $\{g_1,\ldots,g_{N^d}\} = \{ (i_1,\ldots,i_d) ; i_1 = 1,\ldots,N, \ldots , i_d = 1,\ldots , N \}$ and where $(\epsilon_i)_{i \in \IN}$ is a sequence of $i.i.d.$ random variables with distribution on $[-1/2+\delta,1/2-\delta]^d$ with $0<\delta<1/2$. 
Then, Condition~\ref{cond:asymptotic:identifiability:theta} holds almost surely, provided that, for $\theta \neq \theta_0$, there exists $i \in \IZ^d$ for which $k_{Y,\theta}( i + \epsilon_1 - \epsilon_2 )$ and $k_{Y,\theta_0}( i + \epsilon _1 - \epsilon_2 )$ are not almost surely equal.

Furthermore, Condition~\ref{cond:asymptotic:identifiability:local:theta} holds almost surely, provided that for $(\alpha_1,\ldots,\alpha_p) \neq (0 , \ldots ,0)$, there exists $i \in \IZ^d$ for which $ \sum_{\ell=1}^p  \alpha_\ell \partial k_{Y,\theta_0}( i + \epsilon_1 - \epsilon_2 ) / \partial \theta_\ell$ is not almost surely equal to zero.
\end{lemma}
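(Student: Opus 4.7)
The plan is to follow the strategy of \cite{bachoc14asymptotic}, since the randomness in the perturbed grid allows for a strong law of large numbers applied to lag-indexed sub-averages. The target is to show that the empirical sums in Conditions~\ref{cond:asymptotic:identifiability:theta} and \ref{cond:asymptotic:identifiability:local:theta} converge almost surely to limits that are strictly positive under the stated non-degeneracy hypotheses, and to upgrade this pointwise convergence to a uniform lower bound.

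First, I would regroup the double sum by lag: for $k \in \mathbb{Z}^d$ let $J_n(k) = \{ (i,j) : 1 \leq i,j \leq n,\ g_i - g_j = k \}$. Using the regular grid structure of $(g_i)_{i\in\IN}$, one shows that $|J_n(k)|/n \to \rho_k$ for some $\rho_k \in (0,1]$ whenever $k$ lies within the grid range (boundary effects contribute $O(n^{1-1/d})$ pairs only). Then, for any fixed bounded continuous function $\psi : \IR^d \to \IR$, a Chebyshev argument based on the fact that any pair in $J_n(k)$ shares an index with at most $O(1)$ other pairs yields
\[
\frac{1}{|J_n(k)|} \sum_{(i,j)\in J_n(k)} \psi(\epsilon_i - \epsilon_j)
\xrightarrow[n\to\infty]{\text{a.s.}}
\IE[\psi(\epsilon_1 - \epsilon_2)].
\]

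Applied to $\psi(\cdot) = (k_{Y,\theta}(k + \cdot) - k_{Y,\theta_0}(k + \cdot))^2$ for fixed $\theta$ and $k$, this gives the pointwise a.s.\ limit
\[
\lim_{n \to \infty}
\frac{1}{n}\sum_{i,j=1}^n \bigl(k_{Y,\theta}(s_i - s_j) - k_{Y,\theta_0}(s_i - s_j)\bigr)^2
= \sum_{k \in \IZ^d} \rho_k\, \IE\bigl[(k_{Y,\theta}(k + \epsilon_1 - \epsilon_2) - k_{Y,\theta_0}(k + \epsilon_1 - \epsilon_2))^2\bigr],
\]
where the truncation to $|k|_\infty \leq K$ produces an error that is uniformly $O(e^{-\Cinf K})$ thanks to \eqref{eq:ktheta:decay} in Condition~\ref{cond:cov:Y:theta}. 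The non-degeneracy hypothesis ensures that at least one term in the series is strictly positive whenever $\theta \neq \theta_0$, so the limit is strictly positive pointwise.

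To get the uniform liminf over $\{\theta \in \Theta : \|\theta - \theta_0\| \geq \alpha\}$, I would exploit compactness of this set together with equicontinuity of the function $\theta \mapsto \frac{1}{n} \sum_{i,j}(k_{Y,\theta}(s_i - s_j) - k_{Y,\theta_0}(s_i - s_j))^2$ in $\theta$, which follows from \eqref{eq:ktheta:smoothness} and the mean value theorem: differences in $\theta$ translate into uniformly $\ell_1$-summable differences in the kernel. Combining a finite $\epsilon$-net on the compact set with the pointwise a.s.\ convergence at each net point (on a common full-measure event), and the uniform truncation in $k$, yields a strictly positive uniform liminf almost surely, which is Condition~\ref{cond:asymptotic:identifiability:theta}. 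The argument for Condition~\ref{cond:asymptotic:identifiability:local:theta} is analogous, applied to $\psi(\cdot) = \bigl(\sum_\ell \alpha_\ell \partial_{\theta_\ell} k_{Y,\theta_0}(k + \cdot)\bigr)^2$ on the unit sphere of $(\alpha_\ell)$, which is again compact, and using the local non-vanishing hypothesis on $\sum_\ell \alpha_\ell \partial_{\theta_\ell} k_{Y,\theta_0}(i + \epsilon_1 - \epsilon_2)$.

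The main obstacle is the passage from pointwise to uniform control. The pairs in $J_n(k)$ are only weakly dependent (sharing at most one coordinate), so the pointwise SLLN is quite robust; the subtlety is rather that the full-measure events depend on $\theta$, and one must combine them into a single full-measure event. This is handled by the standard $\epsilon$-net + equicontinuity reduction described above, whose validity relies crucially on the smoothness bound \eqref{eq:ktheta:smoothness} together with the exponential decay \eqref{eq:ktheta:decay} for the $k$-tail truncation.
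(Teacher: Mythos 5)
The paper gives no proof of this lemma beyond the citation to \cite{bachoc14asymptotic}, and your reconstruction follows exactly the strategy of that reference: decomposition of the double sum by grid lags, a law of large numbers for the weakly dependent lag-indexed averages, truncation of the lag series via the decay of $k_{Y,\theta}$ and its derivatives, and an $\epsilon$-net/equicontinuity argument to pass from pointwise to uniform positivity, so your approach is essentially the same and is correct. The only details worth tightening are that the Chebyshev variance bound $O(1/n)$ is not summable (a fourth-moment bound or a subsequence-plus-sandwich argument is needed for the almost sure convergence), and that for Condition~\ref{cond:asymptotic:identifiability:local:theta} the derivatives only decay polynomially as in \eqref{eq:ktheta:smoothness}, which still suffices for the tail truncation since $\sum_{k\in\IZ^d}(1+|k|^{d+\Cinf})^{-1}<\infty$.
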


\subsection{Cross  Validation} \label{subsection:CV}
We consider the cross validation estimator consisting of minimizing the sum of leave-one-out square errors.

Since the leave-one-out errors do not depend on the variance $k_{Y,\theta}(0)$, we introduce some additional notation. In Subsections~\ref{subsection:CV} and~\ref{sub:sec:joint}, we let $\Theta = [\sigma_{\inf}^2 , \sigma_{\sup}^2] \times \cS$ where $0 < \sigma_{\inf}^2 < \sigma_{\sup}^2 < \infty$ are fixed and where $\mathcal{S}$ is compact in $\mathbb{R}^{p-1}$. We let
 $\theta = (\sigma^2 , \psi)$ with $\sigma_{\inf}^2 \leq \sigma^2 \leq \sigma_{\sup}^2$ and $\psi \in \mathcal{S} $. We assume that for $\theta \in \Theta$, $k_{Y,\theta} = \sigma^2 c_{Y , \psi}$, with $c_{Y,\psi}$ a stationary correlation function. Similarly, we let $\theta_0 = (\sigma_0^2 , \psi_0)$. For $\psi \in \mathcal{S}$, we let $C_{\psi} = \bigl( c_{Y, \psi} (s_i - s_j) \bigr)_{i,j=1,\ldots,n}$.
Cross validation is defined for  $n \in \IN$ by 

\begin{equation} \label{eq:hat:theta:CV}
\hat{\psi}_{\text{CV}}
\in 
\argmin_{\psi \in \mathcal{S}}
CV_{\psi},
\end{equation}
with
\[
CV_{\psi} = \frac{1}{n} y^\top C_\psi^{-1} \diag(C_\psi^{-1})^{-2} C_\psi^{-1}y,
\]
where $\diag(M)$ is obtained by setting the off-diagonal elements of a square matrix $M$ to zero.
The criterion $CV_\psi$ is the sum of leave-one-out square errors, as is shown for instance in \cite{dubrule83cross,bachoc2013cross}.

 The asymptotic behaviour of $\hat{\psi}_{\text{CV}}$ was studied in the Gaussian framework in \cite{bachoc14asymptotic} and under increasing-domain asymptotics. We also remark that, in the Gaussian framework, a modified leave-one-out criterion was studied  in \cite{bachoclagnoux2017} in the case of infill asymptotics.

The next identifiability condition is also made in \cite{bachoc14asymptotic}.

\begin{condition} \label{cond:asymptotic:identifiability:theta:CV}
For all $\alpha >0$, we have
\[
\liminf_{n \to \infty}
\inf_{ || \psi - \psi_0 || \geq \alpha }
\frac{1}{n}
\sum_{i,j=1}^n
\left(
c_{Y,\psi}(s_i - s_j)
-
c_{Y,\psi_0} (s_i - s_j)
\right)^2
> 0.
\]
\end{condition}

The  next theorem provides the consistency of the cross validation estimator.

\begin{theorem} \label{theorem:consistency:thetaCV}
Consider the setting of Section~\ref{sub:sec:Framework:estitheta} for $Z$, $T$, $Y$ and $(s_i)_{i \in \mathbb{N}}$.
Assume that Conditions 
\ref{cond:cov:Y:theta}, \ref{cond:cov:Y:theta:fourier}, \ref{cond:theta:0} and \ref{cond:asymptotic:identifiability:theta:CV} hold. Then, as $n \to \infty$,
\[
\hat{\psi}_{\text{CV}}
\overset{p}{\to}
\psi_0.
\]
\end{theorem}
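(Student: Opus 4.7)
The plan is to follow the classical M-estimator consistency template: establish (i) uniform convergence $\sup_{\psi \in \cS}|CV_\psi - \IE[CV_\psi]| \overset{p}{\to} 0$, and (ii) asymptotic well-separatedness $\liminf_n \inf_{||\psi - \psi_0|| \geq \alpha}(\IE[CV_\psi] - \IE[CV_{\psi_0}]) > 0$; these together yield $\hat{\psi}_{\text{CV}} \overset{p}{\to} \psi_0$ by a standard argmin argument.

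For (i), I first rewrite $CV_\psi = n^{-1} y^\top A_\psi y$ with $A_\psi = C_\psi^{-1} \diag(C_\psi^{-1})^{-2} C_\psi^{-1}$ and verify that the entries $(A_\psi)_{i,j}$ satisfy the polynomial decay required by Theorem~\ref{theorem:generic:TCL}, uniformly in $\psi \in \cS$. This rests on three ingredients: (a) Theorem~\ref{theorem:decay:coeff:Rinverse} applied to $c_{Y,\psi}$ (which satisfies Condition~\ref{cond:sub:exp:ass:pos} uniformly in $\psi$ by Conditions~\ref{cond:cov:Y:theta} and~\ref{cond:cov:Y:theta:fourier}, the uniformity of the spectral lower bound coming from the joint continuity of the Fourier transform on the compact $\cS \times \IR^d$); (b) the diagonal entries of $C_\psi^{-1}$ lie in a compact subinterval of $(0,\infty)$ via $1/\lambda_1(C_\psi) \leq (C_\psi^{-1})_{k,k} \leq 1/\lambda_n(C_\psi)$, so $\diag(C_\psi^{-1})^{-2}$ has bounded diagonal; and (c) the convolution inequality $\sum_k (1+|s_i-s_k|^{d+\tau})^{-1}(1+|s_k-s_j|^{d+\tau})^{-1} \leq \Csup(1+|s_i-s_j|^{d+\tau})^{-1}$ for $\tau > 0$, which propagates the decay across the product $C_\psi^{-1} \cdot (\text{diagonal}) \cdot C_\psi^{-1}$. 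Theorem~\ref{theorem:generic:TCL} then yields $n\var(CV_\psi) = O(1)$ pointwise in $\psi$, so $CV_\psi - \IE[CV_\psi] \overset{p}{\to} 0$ for each $\psi$. The same structural analysis applied to $\partial A_\psi/\partial \psi_\ell$ (whose building blocks $-C_\psi^{-1}(\partial C_\psi/\partial \psi_\ell)C_\psi^{-1}$ and associated diagonal perturbations inherit polynomial decay thanks to the smoothness estimate \eqref{eq:ktheta:smoothness} in Condition~\ref{cond:cov:Y:theta}) bounds $\IE[(\partial CV_\psi/\partial \psi_\ell)^2]$ uniformly in $\psi$, which via Markov and a standard covering argument on the compact $\cS$ upgrades pointwise convergence to uniform convergence.

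For (ii), the key observation is that $\IE[CV_\psi] = (\sigma_0^2/n)\trace(A_\psi C_{\psi_0})$ depends on the law of $y$ only through its second-moment structure $\sigma_0^2 C_{\psi_0}$, which agrees exactly with the Gaussian setting. The identifiability argument of \cite{bachoc14asymptotic} therefore transfers verbatim: under Condition~\ref{cond:asymptotic:identifiability:theta:CV}, the expected leave-one-out squared error $\IE[CV_\psi]$ is asymptotically minimized at $\psi_0$, and the gap at $||\psi-\psi_0|| \geq \alpha$ is bounded below by a positive multiple of $n^{-1}\sum_{i,j}(c_{Y,\psi}(s_i-s_j)-c_{Y,\psi_0}(s_i-s_j))^2$, which is in turn bounded below by Condition~\ref{cond:asymptotic:identifiability:theta:CV}. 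The essential non-Gaussian obstacle lies entirely in step (i): without Gaussianity one cannot diagonalize $A_\psi$ in the eigenbasis of the observation vector to reduce $CV_\psi - \IE[CV_\psi]$ to a sum of independent squared normals, and the required variance control must instead be extracted from Theorem~\ref{theorem:generic:TCL}, whose polynomial-decay hypothesis is precisely what drives the structural analysis of $A_\psi$ above. Identifiability and the argmin conclusion are classical and depend only on the second-moment structure, so they carry over from the Gaussian treatment without modification.
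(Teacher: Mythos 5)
Your proposal is correct and follows essentially the same route as the paper: pointwise variance control of $CV_\psi$ via the entrywise polynomial decay of $C_\psi^{-1}\diag(C_\psi^{-1})^{-2}C_\psi^{-1}$ (Theorem~\ref{theorem:decay:coeff:Rinverse} made uniform in $\psi$, the product/convolution lemma, and Theorem~\ref{theorem:generic:TCL}), a uniform-in-$\psi$ control of the gradient $\partial CV_\psi/\partial\psi_\ell$ to pass from pointwise to uniform convergence on the compact $\mathcal{S}$, and the observation that $\IE[CV_\psi]$ depends only on the second-moment structure so that the identifiability/argmin step of \cite{bachoc14asymptotic} carries over unchanged. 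The only cosmetic difference is that you upgrade to uniformity via Markov plus a covering argument on $\IE[(\partial CV_\psi/\partial\psi_\ell)^2]$, whereas the paper bounds $\sup_{\psi}|\partial CV_\psi/\partial\psi_\ell|$ directly by $\Csup\|y\|^2/n=O_p(1)$ and then invokes the argument of Proposition 3.4 of \cite{bachoc14asymptotic}; both are standard and equivalent here.
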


The next condition is a local identifiability condition.

\begin{condition} \label{cond:asymptotic:identifiability:local:theta:CV}
For any $(\alpha_1,\ldots,\alpha_{p-1}) \neq (0,\ldots,0)$, we have
\[
\liminf_{n \to \infty}
\frac{1}{n}
\sum_{i,j=1}^n
\Biggl(
\sum_{\ell = 1}^{p-1}
\alpha_{\ell}
\frac{ \partial}{ \partial \psi_\ell } \left(
c_{Y,\psi_0}( s_i-s_j )
\right)\Bigg)\bigg.^2
> 0.
\]
\end{condition}

In the next theorem, we provide the asymptotic normality of the cross validation estimator. In this theorem, the matrices $N_{\psi_0}$ and $\Gamma_{\psi_0}$ depend on the number of observation locations. 

\begin{theorem} \label{theorem:CLT:thetaCV}
Consider the setting of Section~\ref{sub:sec:Framework:estitheta} for $Z$, $T$, $Y$ and $(s_i)_{i \in \mathbb{N}}$.
Assume that Conditions~\ref{cond:cov:Y:theta}, \ref{cond:cov:Y:theta:fourier}, \ref{cond:theta:0}, \ref{cond:asymptotic:identifiability:theta:CV} and \ref{cond:asymptotic:identifiability:local:theta:CV} hold.
Let $N_{\psi_0}$ be the $(p-1) \times (p-1)$ matrix defined by 
\begin{align*}
    (N_{\psi_0})_{i,j} =& - \frac{8}{n} \trace \Bigl( \frac{\partial C_{\psi_0}}{\partial \psi_j} C_{\psi_0}^{-1} \diag(C_{\psi_0}^{-1})^{-3} \diag \Bigl( C_{\psi_0}^{-1} \frac{\partial C_{\psi_0}}{\partial \psi_i} C_{\psi_0}^{-1} \Bigr) C_{\psi_0}^{-1} \Bigr)\\
    & + \frac{2}{n} \trace \Bigl( \frac{\partial C_{\psi_0}}{\partial \psi_j} C_{\psi_0}^{-1} \diag(C_{\psi_0}^{-1})^{-2} C_{\psi_0}^{-1} \frac{\partial C_{\psi_0}}{\partial \psi_i} C_{\psi_0}^{-1} \Bigr)\\
    & + \frac{6}{n} \trace \Bigl( \diag(C_{\psi_0}^{-1})^{-4} \diag\Bigl( C_{\psi_0}^{-1} \frac{\partial C_{\psi_0}}{\partial \psi_i} C_{\psi_0}^{-1} \Bigr) \diag\Bigl( C_{\psi_0}^{-1} \frac{\partial C_{\psi_0}}{\partial \psi_j} C_{\psi_0}^{-1} \Bigr) C_{\psi_0}^{-1} \Bigr).
\end{align*}
Let $\Gamma_{\psi_0}$ be the $(p-1) \times (p-1)$ covariance matrix defined by $(\Gamma_{\psi_0})_{i,j} = \Cov ( n^{1/2} \partial CV_{\psi_0} / \partial \psi_i ,\linebreak[1] n^{1/2} \partial CV_{\psi_0} / \partial \psi_j )$.
Let $\mathcal{Q}_{\psi_0,n}$ be the distribution of $\sqrt{n} ( \hat{\psi}_{\text{CV}} - \psi_0 )$.
Then, as $n \to \infty$,
\begin{equation} \label{eq:TCL:theta:CV}
d_w
\left(
\mathcal{Q}_{\psi_0,n}
,
\cN
[
0
,
N_{\psi_0}^{-1}
\Gamma_{\psi_0}
N_{\psi_0}^{-1}
]
\right)
\to 0.
\end{equation}
In addition,
\begin{equation} \label{eq:in:TCL:theta:lambda1:CV}
\limsup_{n \to \infty}
\lambda_1(  N_{\psi_0}^{-1}
\Gamma_{\psi_0}
N_{\psi_0}^{-1} )
<
+
\infty.
\end{equation}
\end{theorem}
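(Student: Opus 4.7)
The plan is to follow the standard M-estimator Taylor-expansion strategy, with the two key technical inputs being Theorems~\ref{theorem:decay:coeff:Rinverse} and~\ref{theorem:generic:TCL}. By Theorem~\ref{theorem:consistency:thetaCV} and since $\psi_0$ is in the interior of $\cS$, with probability tending to one the estimator $\hat{\psi}_{\text{CV}}$ satisfies the first-order condition $\nabla CV_{\hat{\psi}_{\text{CV}}} = 0$. A first-order Taylor expansion of $\nabla CV_\psi$ around $\psi_0$ yields
\[
\sqrt{n}\, (\hat{\psi}_{\text{CV}} - \psi_0) = - H_n^{-1}\, \sqrt{n}\, \nabla CV_{\psi_0},
\]
where $H_n$ is the Hessian of $CV$ evaluated at an intermediate point $\tilde{\psi}$ between $\hat{\psi}_{\text{CV}}$ and $\psi_0$. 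The proof then reduces to (i) a multivariate CLT for the score $\sqrt{n}\, \nabla CV_{\psi_0}$, (ii) showing $H_n - N_{\psi_0} \overset{p}{\to} 0$ together with a uniform lower bound on $\lambda_{p-1}(N_{\psi_0})$, and (iii) stitching these together by a Slutsky-type argument compatible with the metric $d_w$.

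For step (i), write $CV_\psi = \frac{1}{n}\, y^\top B_\psi\, y$ with $B_\psi = C_\psi^{-1} \diag(C_\psi^{-1})^{-2} C_\psi^{-1}$, so that each component of the score is the quadratic form $\frac{1}{n} y^\top (\partial_\ell B_\psi) y$. Using $\partial_\ell C_\psi^{-1} = - C_\psi^{-1} (\partial_\ell C_\psi) C_\psi^{-1}$, the entries of $\partial_\ell B_{\psi_0}$ are finite sums of products of entries of $C_{\psi_0}^{-1}$, of $\partial_\ell C_{\psi_0}$, and of diagonal rescaling factors $(C_{\psi_0}^{-1})_{i,i}^{-k}$. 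Theorem~\ref{theorem:decay:coeff:Rinverse} applied to $C_{\psi_0}$ (valid since $c_{Y,\psi_0}$ inherits Condition~\ref{cond:sub:exp:ass:pos} from Conditions~\ref{cond:cov:Y:theta} and~\ref{cond:cov:Y:theta:fourier}) gives polynomial decay of $|(C_{\psi_0}^{-1})_{i,j}|$, the smoothness bound~\eqref{eq:ktheta:smoothness} gives polynomial decay of $|(\partial_\ell C_{\psi_0})_{i,j}|$, and the uniform spectral bound from Condition~\ref{cond:sub:exp:ass:pos}~(ii) bounds $(C_{\psi_0}^{-1})_{i,i}$ from above and away from zero. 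A convolution argument (summability of products of polynomially decaying kernels on $\mathbb{R}^d$) then shows that the entries of $\partial_\ell B_{\psi_0}$ satisfy the decay hypothesis of Theorem~\ref{theorem:generic:TCL}. The Cramér--Wold device combined with Theorem~\ref{theorem:generic:TCL} applied to linear combinations of the $\partial_\ell B_{\psi_0}$ then yields the joint asymptotic normality of $\sqrt{n}( \nabla CV_{\psi_0} - \mathbb{E}[\nabla CV_{\psi_0}] )$ with asymptotic covariance matrix asymptotically equivalent to $\Gamma_{\psi_0}$. The recentering is removed by observing that $\mathbb{E}[CV_\psi]$ depends on $y$ only through $\mathbb{E}[yy^\top] = \sigma_0^2 C_{\psi_0}$, hence coincides with the usual Gaussian cross-validation criterion, which is known to be stationary at $\psi_0$; therefore $\mathbb{E}[\nabla CV_{\psi_0}] = 0$.

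For step (ii), the Hessian entries $(H_n)_{\ell,m} = \frac{1}{n} y^\top (\partial^2_{\ell m} B_{\tilde{\psi}}) y$ are again quadratic forms. A direct differentiation shows that $\mathbb{E}[\nabla^2 CV_{\psi_0}]_{\ell,m}$ equals the trace expression defining $(N_{\psi_0})_{\ell,m}$. The same convolution-type decay bounds as in step (i), applied now to the second derivatives of $B_\psi$, together with the variance bound in Theorem~\ref{theorem:generic:TCL}, give $H_n(\psi_0) - N_{\psi_0} \overset{p}{\to} 0$ entrywise; a uniform-in-$\psi$ version of these bounds on a neighborhood of $\psi_0$, combined with $\tilde\psi \overset{p}{\to} \psi_0$ coming from Theorem~\ref{theorem:consistency:thetaCV} and continuity in $\psi$ of the relevant matrices, upgrades this to $H_n - N_{\psi_0} \overset{p}{\to} 0$. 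The lower bound $\liminf_n \lambda_{p-1}(N_{\psi_0}) > 0$ follows from Condition~\ref{cond:asymptotic:identifiability:local:theta:CV}, together with the uniform spectral bounds on $C_{\psi_0}$ from Conditions~\ref{cond:sub:exp:ass:pos} and~\ref{cond:cov:Y:theta:fourier}, by the same algebraic argument used in the Gaussian setting of~\cite{bachoc14asymptotic}; the non-Gaussianity is entirely absorbed into the trace formula for $N_{\psi_0}$. The bound~\eqref{eq:in:TCL:theta:lambda1:CV} follows by combining the uniform boundedness of $\Gamma_{\psi_0}$ (from the second assertion of Theorem~\ref{theorem:generic:TCL}) with the uniform lower bound on $\lambda_{p-1}(N_{\psi_0})$.

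The main obstacle is the technical bookkeeping in step (ii): the $\diag(C_\psi^{-1})^{-2}$ factor causes the first and second derivatives of $B_\psi$ to proliferate into many terms, each of which must be controlled both in entrywise polynomial decay (to invoke Theorem~\ref{theorem:generic:TCL}) and uniformly in $\psi$ over a neighborhood of $\psi_0$ (to handle the intermediate point $\tilde{\psi}$). In particular, controlling $\partial_\ell \diag(C_\psi^{-1})^{-2}$ requires lower-bounding $(C_\psi^{-1})_{i,i}$ uniformly in $\psi$ and $i$, for which one uses the uniform upper bound on $\lambda_1(C_\psi)$ that follows from Condition~\ref{cond:cov:Y:theta} and Gershgorin-type summability. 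Once this decay and regularity package is in place, Theorem~\ref{theorem:generic:TCL} fits directly into a Slutsky argument to yield~\eqref{eq:TCL:theta:CV}.
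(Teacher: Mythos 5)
Your proposal is correct and follows essentially the same route as the paper: both express the score and Hessian of $CV_\psi$ as quadratic forms whose matrices have polynomially decaying entries (via Theorem~\ref{theorem:decay:coeff:Rinverse}, the product lemma, and the lower bound on $\diag(C_\psi^{-1})$), invoke Theorem~\ref{theorem:generic:TCL} for the CLT of the score and the concentration of the Hessian around $N_{\psi_0}$, bound $\lambda_{p-1}(N_{\psi_0})$ from below using Condition~\ref{cond:asymptotic:identifiability:local:theta:CV} as in the Gaussian case, and conclude by the standard M-estimation Taylor/Slutsky argument (the paper controls the intermediate point via a uniform $O_p(1)$ bound on third derivatives and passes to subsequences along which $N_{\psi_0}$ and $\Gamma_{\psi_0}$ converge, which is the precise form of the step you describe as continuity plus a $d_w$-compatible Slutsky argument).
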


Similarly as for maximum likelihood, Condition~\ref{cond:asymptotic:identifiability:theta:CV} is a global identifiability condition for the correlation function. In the same way, Condition~\ref{cond:asymptotic:identifiability:local:theta:CV} is a local identifiability condition for the correlation function around $\psi_0$.
The sequence of observation locations presented in Lemma~\ref{lemma:example:ML} also satisfies Conditions~\ref{cond:asymptotic:identifiability:theta:CV} and \ref{cond:asymptotic:identifiability:local:theta:CV} by replacing $k_{Y,\theta}$ with $c_{Y,\psi}$.

We remark that the conditions for cross validation imply those for maximum likelihood.

\begin{lemma} \label{lem:cond:CV:implies:ML}
Condition~\ref{cond:asymptotic:identifiability:theta:CV} implies Condition~\ref{cond:asymptotic:identifiability:theta} and Condition~\ref{cond:asymptotic:identifiability:local:theta:CV} implies Condition~\ref{cond:asymptotic:identifiability:local:theta}.
\end{lemma}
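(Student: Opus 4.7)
The plan is to exploit the factorization $k_{Y,\theta}(s) = \sigma^2 c_{Y,\psi}(s)$ with $\theta = (\sigma^2, \psi)$ together with the normalization $c_{Y,\psi}(0) = 1$ (which in particular forces $\partial c_{Y,\psi}(0)/\partial \psi_\ell = 0$), in order to isolate the $\sigma^2$-direction in each condition and reduce to the analogous condition on the correlation function.

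For the second implication (Condition~\ref{cond:asymptotic:identifiability:local:theta:CV} $\Rightarrow$ Condition~\ref{cond:asymptotic:identifiability:local:theta}), I would fix $(\alpha_1, \ldots, \alpha_p) \neq 0$ and let $\alpha_1$ denote the coordinate corresponding to $\sigma^2$. Direct computation gives $\partial k_{Y,\theta_0}/\partial \sigma^2 = c_{Y,\psi_0}$ and $\partial k_{Y,\theta_0}/\partial \psi_\ell = \sigma_0^2\, \partial c_{Y,\psi_0}/\partial \psi_\ell$, so the sum inside the square reads $E_{ij} = \alpha_1 c_{Y,\psi_0}(s_i - s_j) + \sigma_0^2 \sum_{\ell=2}^p \alpha_\ell (\partial c_{Y,\psi_0}/\partial \psi_{\ell-1})(s_i - s_j)$. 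I would then split into two cases. If $\alpha_1 \neq 0$, then $E_{ii} = \alpha_1$ at the diagonal, so the diagonal contributions alone yield $n^{-1}\sum_{i,j} E_{ij}^2 \geq \alpha_1^2 > 0$. If $\alpha_1 = 0$, then $(\alpha_2, \ldots, \alpha_p) \neq 0$ and $E_{ij}$ equals $\sigma_0^2$ times the quantity appearing in Condition~\ref{cond:asymptotic:identifiability:local:theta:CV}, whose squared average has strictly positive liminf by that condition.

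For the first implication (Condition~\ref{cond:asymptotic:identifiability:theta:CV} $\Rightarrow$ Condition~\ref{cond:asymptotic:identifiability:theta}), let $\alpha > 0$ and $\theta = (\sigma^2, \psi)$ with $\|\theta - \theta_0\| \geq \alpha$. I would fix $\epsilon \in (0, \alpha)$ (to be chosen small later) and split into two cases. If $|\sigma^2 - \sigma_0^2| \geq \epsilon$, then since $c_{Y,\psi}(0) = c_{Y,\psi_0}(0) = 1$, the diagonal of $A_{ij} = \sigma^2 c_{Y,\psi}(s_i - s_j) - \sigma_0^2 c_{Y,\psi_0}(s_i - s_j)$ gives $n^{-1}\sum_{i,j} A_{ij}^2 \geq (\sigma^2 - \sigma_0^2)^2 \geq \epsilon^2$. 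Otherwise $|\sigma^2 - \sigma_0^2| < \epsilon$ and $\|\psi - \psi_0\| \geq \sqrt{\alpha^2 - \epsilon^2} =: \beta > 0$, and I would write $A_{ij} = \sigma_0^2 B_{ij} + (\sigma^2 - \sigma_0^2) D_{ij}$ with $B_{ij} = c_{Y,\psi}(s_i - s_j) - c_{Y,\psi_0}(s_i - s_j)$ and $D_{ij} = c_{Y,\psi}(s_i - s_j)$. Combined with the elementary inequality $(x + y)^2 \geq x^2/2 - y^2$, this yields $n^{-1}\sum_{i,j} A_{ij}^2 \geq (\sigma_0^4/2)\, n^{-1}\sum_{i,j} B_{ij}^2 - \epsilon^2\, n^{-1}\sum_{i,j} D_{ij}^2$. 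The bound $\sup_{\psi \in \mathcal{S}} |c_{Y,\psi}(s)| \leq (\Csup/\sigma_{\inf}^2) \exp(-\Cinf |s|)$, deduced from Condition~\ref{cond:cov:Y:theta} and the constraint $\sigma^2 \geq \sigma_{\inf}^2$, together with Condition~\ref{cond:delta}, bounds $n^{-1}\sum_{i,j} D_{ij}^2$ by a constant $C$ independent of $n$ and $\psi$. Condition~\ref{cond:asymptotic:identifiability:theta:CV} applied with level $\beta$ furnishes $c > 0$ with $\liminf_n \inf_{\|\psi - \psi_0\| \geq \beta} n^{-1}\sum_{i,j} B_{ij}^2 \geq c$, hence $\liminf_n n^{-1}\sum_{i,j} A_{ij}^2 \geq \sigma_0^4 c / 2 - \epsilon^2 C$. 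Choosing $\epsilon^2 \leq \min(\alpha^2/2, \sigma_0^4 c/(4 C))$ makes both cases give a strictly positive lower bound, proving the claim.

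The main delicate point is the second case of the first implication: when $\sigma^2 \approx \sigma_0^2$ but $\psi$ is far from $\psi_0$, one still has to exclude that a small variance mismatch cancels the correlation signal supplied by Condition~\ref{cond:asymptotic:identifiability:theta:CV}. The uniform exponential decay in Condition~\ref{cond:cov:Y:theta} combined with the minimum-separation assumption is exactly what forces the cross-term $n^{-1}\sum_{i,j} D_{ij}^2$ to be bounded by a universal constant, so that the perturbation is dominated by $\epsilon^2$ and can be absorbed.
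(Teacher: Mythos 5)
Your proof is correct. It rests on the same two pillars as the paper's own argument---the normalization $c_{Y,\psi}(0)=1$ (hence $\partial c_{Y,\psi_0}(0)/\partial\psi_\ell=0$), which lets the diagonal terms isolate the $\sigma^2$-direction, and the uniform summability of $\sum_j e^{-\Cinf|s_i-s_j|}$ under Condition~\ref{cond:delta}---but it is organized dually. The paper bounds the cross-validation quantities \emph{above} by the maximum-likelihood quantities plus a variance-mismatch term, and then argues by subsequence extraction for the global condition and by contraposition for the local one; you instead lower-bound the maximum-likelihood quantities directly, via the decomposition $k_{Y,\theta}-k_{Y,\theta_0}=\sigma_0^2\,(c_{Y,\psi}-c_{Y,\psi_0})+(\sigma^2-\sigma_0^2)\,c_{Y,\psi}$, the inequality $(x+y)^2\ge x^2/2-y^2$, and an explicit threshold $\epsilon$. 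Your version is more quantitative and avoids subsequences entirely. One small wrinkle in the global implication: as written, the choice of $\epsilon$ depends on the constant $c$ furnished by Condition~\ref{cond:asymptotic:identifiability:theta:CV} at level $\beta=\sqrt{\alpha^2-\epsilon^2}$, which itself depends on $\epsilon$, so the quantifiers look circular. This is harmless: take $c$ once and for all at the level $\alpha/\sqrt{2}$ (i.e., $\epsilon^2=\alpha^2/2$); since shrinking $\epsilon$ only enlarges $\beta$, and the infimum over $\{||\psi-\psi_0||\ge\beta\}$ is nondecreasing in $\beta$, that same $c$ serves for every $\epsilon^2\le\alpha^2/2$, and your final choice of $\epsilon$ goes through.
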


\subsection{Joint asymptotic normality}
\label{sub:sec:joint}

From Theorems~\ref{theorem:CLT:thetaML} and \ref{theorem:CLT:thetaCV}, both the maximum likelihood and cross validation estimators converge at the standard parametric rate $n^{1/2}$.
Let us write $\hat{\theta}_{\text{ML}} = (\hat{\sigma}_{\text{ML}}^2 , \hat{\psi}_{\text{ML}})$.
In the case where $T$ is the identity function (that is, where we observe Gaussian processes instead of transformed Gaussian processes), numerical experiments tend to show that $\hat{\psi}_\text{ML}$ is more accurate than $\hat{\psi}_\text{CV}$ \cite{bachoc14asymptotic}. Indeed, when $T$ is the identity function, maximum likelihood is based on the Gaussian probability density function of the observation vector.

In contrast, when $T$ is not the identity function, $\hat{\psi}_\text{ML}$ is an M-estimator based on a criterion which does not coincide with the observation probability density function anymore. Hence, it is conceivable that $\hat{\psi}_\text{CV}$ could become more accurate than $\hat{\psi}_\text{ML}$. Furthermore, it is possible that using linear combinations of these two estimators could result in a third one with improved accuracy \cite{lavancier2016general,bates1969combination}.

Motivated by this discussion, we now provide a joint central limit theorem for the maximum likelihood and cross validation estimators.

\begin{theorem} \label{theorem:CLT:joint}
Consider the setting of Section~\ref{sub:sec:Framework:estitheta} for $Z$, $T$, $Y$ and $(s_i)_{i \in \mathbb{N}}$.
Assume that Conditions~\ref{cond:cov:Y:theta}, \ref{cond:cov:Y:theta:fourier}, \ref{cond:theta:0}, \ref{cond:asymptotic:identifiability:theta:CV} and \ref{cond:asymptotic:identifiability:local:theta:CV} hold.
Let $D_{\theta_0}$ be the $(2p-1) \times (2p-1)$ block diagonal matrix with first $p \times p$ block equal to $M_{\theta_0}$ and second $(p-1) \times (p-1)$ block equal to $N_{\psi_0}$, with the notation of Theorems \ref{theorem:CLT:thetaML} and \ref{theorem:CLT:thetaCV}.
 Also let 
$\Psi_{\theta_0}$ be the $(2p-1) \times (2p-1)$ covariance matrix of the vector $n^{1/2}( \partial L_{\theta_0} / \partial \theta , \partial CV_{\psi_0} / \partial \psi$).

Let $\mathcal{Q}_{\theta_0,n}$ be the distribution of 
\[
\sqrt{n} 
\begin{pmatrix}
\hat{\theta}_{\text{ML}} - \theta_0 \\
\hat{\psi}_{\text{CV}} - \psi_0
\end{pmatrix}.
\]
Then, as $n \to \infty$,
\begin{equation} \label{eq:TCL:joint}
d_w
\left(
\mathcal{Q}_{\theta_0,n}
,
\cN
[
0
,
D_{\theta_0}^{-1}
\Psi_{\theta_0}
D_{\theta_0}^{-1}
]
\right)
\to 0.
\end{equation}
In addition, 
\begin{equation} \label{eq:in:TCL:theta:lambda1:joint}
\limsup_{n \to \infty}
\lambda_1(  D_{\theta_0}^{-1}
\Psi_{\theta_0}
D_{\theta_0}^{-1} )
<
+
\infty.
\end{equation}
\end{theorem}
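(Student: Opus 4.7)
The plan is to deduce the joint central limit theorem from a joint asymptotic normality for the score vector $n^{1/2}(\partial L_{\theta_0}/\partial \theta,\, \partial CV_{\psi_0}/\partial \psi)$, coupled with a standard Taylor expansion of the first-order conditions, in direct analogy with the arguments used for Theorems~\ref{theorem:CLT:thetaML} and~\ref{theorem:CLT:thetaCV}. First, by Lemma~\ref{lem:cond:CV:implies:ML}, the assumptions of Theorem~\ref{theorem:CLT:joint} imply those of Theorems~\ref{theorem:consistency:thetaML} and~\ref{theorem:consistency:thetaCV}, so that $\hat\theta_{\text{ML}}\overset{p}{\to}\theta_0$ and $\hat\psi_{\text{CV}}\overset{p}{\to}\psi_0$. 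On the (asymptotically probability-one) event where both estimators are interior to their parameter spaces, the first-order conditions and a Taylor expansion give
\[
\sqrt n
\begin{pmatrix} \hat\theta_{\text{ML}} - \theta_0 \\ \hat\psi_{\text{CV}} - \psi_0 \end{pmatrix}
=
- \widetilde D_n^{-1}\, \sqrt n
\begin{pmatrix} \partial L_{\theta_0}/\partial\theta \\ \partial CV_{\psi_0}/\partial\psi \end{pmatrix},
\]
where $\widetilde D_n$ is block diagonal with blocks equal to the Hessians of $L$ and $CV$ at intermediate points between the estimators and $(\theta_0,\psi_0)$.

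Second, I would control $\widetilde D_n$. Using Condition~\ref{cond:cov:Y:theta} on the three first derivatives of $k_{Y,\theta}$, the polynomial decay of $R_\theta^{-1}$ from Theorem~\ref{theorem:decay:coeff:Rinverse}, the moment bounds on quadratic forms in $y$ already used in the proofs of Theorems~\ref{theorem:CLT:thetaML} and~\ref{theorem:CLT:thetaCV}, and the consistency of $\hat\theta_{\text{ML}}$ and $\hat\psi_{\text{CV}}$, I obtain $\widetilde D_n - D_{\theta_0} \overset{p}{\to} 0$. Combined with the eigenvalue bounds \eqref{eq:in:TCL:theta:lambda1} and \eqref{eq:in:TCL:theta:lambda1:CV}, this gives $\widetilde D_n^{-1} - D_{\theta_0}^{-1}\overset{p}{\to} 0$ and $\limsup_n \|D_{\theta_0}^{-1}\|_{op} < \infty$.

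The heart of the argument is the joint CLT for the score vector. Direct differentiation yields
\begin{align*}
n^{1/2}\, \frac{\partial L_{\theta_0}}{\partial \theta_i}
&= \frac{1}{\sqrt n}\bigl( y^\top A_{n,i}\, y - \IE[y^\top A_{n,i}\, y] \bigr),\\
n^{1/2}\, \frac{\partial CV_{\psi_0}}{\partial \psi_j}
&= \frac{1}{\sqrt n}\bigl( y^\top B_{n,j}\, y - \IE[y^\top B_{n,j}\, y] \bigr),
\end{align*}
where each symmetric matrix $A_{n,i}$, $B_{n,j}$ is a sum of products of matrices of the form $R_{\theta_0}^{-1}$, $\partial R_{\theta_0}/\partial\theta_i$, $C_{\psi_0}^{-1}$, $\partial C_{\psi_0}/\partial\psi_j$, and $\diag(C_{\psi_0}^{-1})^{-k}$. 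By Theorem~\ref{theorem:decay:coeff:Rinverse} and the decay estimate \eqref{eq:ktheta:smoothness}, each factor has entries bounded by $\Csup/(1+|s_i-s_j|^{d+\Cinf})$, and this polynomial decay is preserved under matrix products of such matrices (an elementary summation argument already used in the proofs of Theorems~\ref{theorem:CLT:thetaML} and~\ref{theorem:CLT:thetaCV}, possibly after shrinking $\Cinf$). Consequently, any fixed real linear combination of the $A_{n,i}$ and $B_{n,j}$ satisfies the decay hypothesis of Theorem~\ref{theorem:generic:TCL}. Applying that theorem to every such linear combination, and invoking the Cramér--Wold device recast in terms of the metric $d_w$, gives
\[
d_w\!\left(\cL\!\left( n^{1/2}\bigl( \partial L_{\theta_0}/\partial\theta,\, \partial CV_{\psi_0}/\partial \psi \bigr) \right),\, \cN(0,\Psi_{\theta_0})\right) \to 0,
\]
together with $\limsup_n \lambda_1(\Psi_{\theta_0}) < \infty$.

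Finally, combining the Taylor expansion, the consistency $\widetilde D_n^{-1} = D_{\theta_0}^{-1}+o_p(1)$, and the joint CLT for the scores through a Slutsky-type argument phrased in $d_w$ yields \eqref{eq:TCL:joint}, while \eqref{eq:in:TCL:theta:lambda1:joint} follows from $\limsup_n \lambda_1(\Psi_{\theta_0})<\infty$ and $\liminf_n \lambda_{2p-1}(D_{\theta_0})>0$ (itself a consequence of \eqref{eq:in:TCL:theta:lambda1} and \eqref{eq:in:TCL:theta:lambda1:CV} together with the block diagonal structure of $D_{\theta_0}$). The main obstacle is that the limiting covariance $\Psi_{\theta_0}$ and the matrix $D_{\theta_0}$ both depend on $n$, so every convergence must be phrased in $d_w$ and the Cramér--Wold reduction must be adapted to this metric; but these are precisely the technical devices already set up in the proofs of Theorems~\ref{theorem:CLT:thetaML} and~\ref{theorem:CLT:thetaCV}, so the present proof is essentially a joint repackaging of the arguments underpinning the two marginal results.
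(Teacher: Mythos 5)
Your proposal is correct and follows essentially the same route as the paper: linearize the two estimating equations around $(\theta_0,\psi_0)$ using the expansions already established for Theorems~\ref{theorem:CLT:thetaML} and~\ref{theorem:CLT:thetaCV}, write the scores as quadratic forms in $y$ whose matrices (and hence any fixed linear combination thereof) satisfy the polynomial-decay hypothesis, apply Theorem~\ref{theorem:generic:TCL} to each such combination, and conclude by Cram\'er--Wold. The only cosmetic difference is that the paper absorbs $M_{\theta_0}^{-1}$ and $N_{\psi_0}^{-1}$ into the weights of a single scalar quadratic form $J_n$ before invoking Theorem~\ref{theorem:generic:TCL}, whereas you establish the joint CLT for the raw score vector first and multiply by $D_{\theta_0}^{-1}$ afterwards; the content is the same.
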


\begin{remark} \label{remark:linear:combination}
From Theorem~\ref{theorem:CLT:joint}, considering any $C^1$ function $f$ from $\mathcal{S}^2 \to \mathcal{S}$ such that $f(\psi,\psi) = \psi$ for any $\psi \in \mathcal{S}$, and applying the classical delta method, we obtain the asymptotic normality of the new estimator $f( \hat{\psi}_{\text{ML}} , \hat{\psi}_{\text{CV}} )$. A classical choice for $f$ is $f(\psi_1,\psi_2) = \lambda \psi_1 + (1 - \lambda) \psi_2$, which leads to linear aggregation \cite{lavancier2016general,bates1969combination}. We remark that selecting an optimal $\lambda$ leading to the smallest asymptotic covariance matrix necessitates an estimation of the asymptotic covariance matrix in Theorem~\ref{theorem:CLT:joint}. We leave this as an open problem for further research.
\end{remark}

\section{Illustration} \label{section:simulation:result}

In this section we numerically illustrate the convergence of different estimators as stated in Corollary~\ref{cor:hat:sigma} and Theorems~\ref{theorem:CLT:thetaML}, \ref{theorem:CLT:thetaCV} and \ref{theorem:CLT:joint}. We use the following simulation setup in dimension $d=2$. For $n=4(m+1/2)^2$ we define the grid $\{-m,\ldots,m\}^2$ and add i.i.d. variables with uniform distribution on $[-0.4,0.4]^2$ to obtain $n$ observation points. Thus, we have a distance of at least $\Delta=0.2$ between the individual observation locations. The zero-mean Gaussian process $Z$ has stationary covariance function $k_Z(s)=\sigma_0^2\exp(-||s||/\rho_0)$, $s\in\IR^2$ and we will denote this reference case as the Gaussian case throughout. We define the zero-mean process $Y=T(Z)=Z^2-\sigma_0^2$ and we will denote this case as the non-Gaussian case. Recall that $k_Y(s)=2k_Z(2s)=2\sigma_0^4\exp(-2||s||/\rho_0)$ (see Remark~\ref{remark:explaination:ML}). We set the marginal variance to $\sigma_0^2=1.5$ and the range to $\rho_0=2$. Hence, in the non-Gaussian case, the marginal variance of $Y$ is $2\sigma_0^4=4.5$ with a range of $\rho_0/2$ which is equal to a half of that of~$Z$.

To start, we consider the maximum likelihood estimates of the marginal variance parameters, when the range of $Y$ or $Z$ is assumed to be known, i.e.,  Corollary~\ref{cor:hat:sigma}. 
Figure~\ref{fig:1} illustrates the empirical densities of $\mathcal{L}_n$ in Corollary \ref{cor:hat:sigma} for $n=100, 400$ and $900$ observation locations based on $N=2500$ replicates. For moderate $n$ sizes and in the non-Gaussian case, the asymptotic variance $\sigma^2_\infty:= n \Var( \hat{\sigma}_{\text{ML}}^2 - 2 \sigma_0^2 )$ (Corollary \ref{cor:hat:sigma}) can be calculated based on~\eqref{eq:nVarVn} and using 
\def\Sigmaf{k}
\begin{align*}
 \Cov( y_iy_j,y_ky_l )&= 4(\Sigmaf_{i,k}^2\Sigmaf_{j,l}^2+\Sigmaf_{i,l}^2\Sigmaf_{j,k}^2)\\&\quad+\def\Sigmaf{k}  
      16(  \Sigmaf_{i,k}\Sigmaf_{i,l}\Sigmaf_{j,k}\Sigmaf_{j,l} +  
                 \Sigmaf_{i,j}\Sigmaf_{i,l}\Sigmaf_{j,k}\Sigmaf_{k,l} +  
                 \Sigmaf_{i,j}\Sigmaf_{i,k}\Sigmaf_{j,l}\Sigmaf_{k,l}  ),
\end{align*}
with $\Sigmaf_{i,j}=k_Z(s_i-s_j)$.
The above display follows from tedious computations based on Isserlis' theorem. 
The densities in red in Figure~\ref{fig:1} are based on the asymptotic distribution $\cN[0,\sigma^2_\infty]$. In the non-Gaussian case and for $n\geq 400$, the calculation of  $\sigma^2_\infty$ is computationally prohibitive, so $\sigma_{\infty}^2$ has instead been approximated by the empirical variance with the corresponding densities indicated in green. As expected, the convergence for the Gaussian case is faster than for the non Gaussian case. But in both situations, the results behave nicely.

\begin{figure}[ht]
  \centering
  \includegraphics[width=\textwidth]{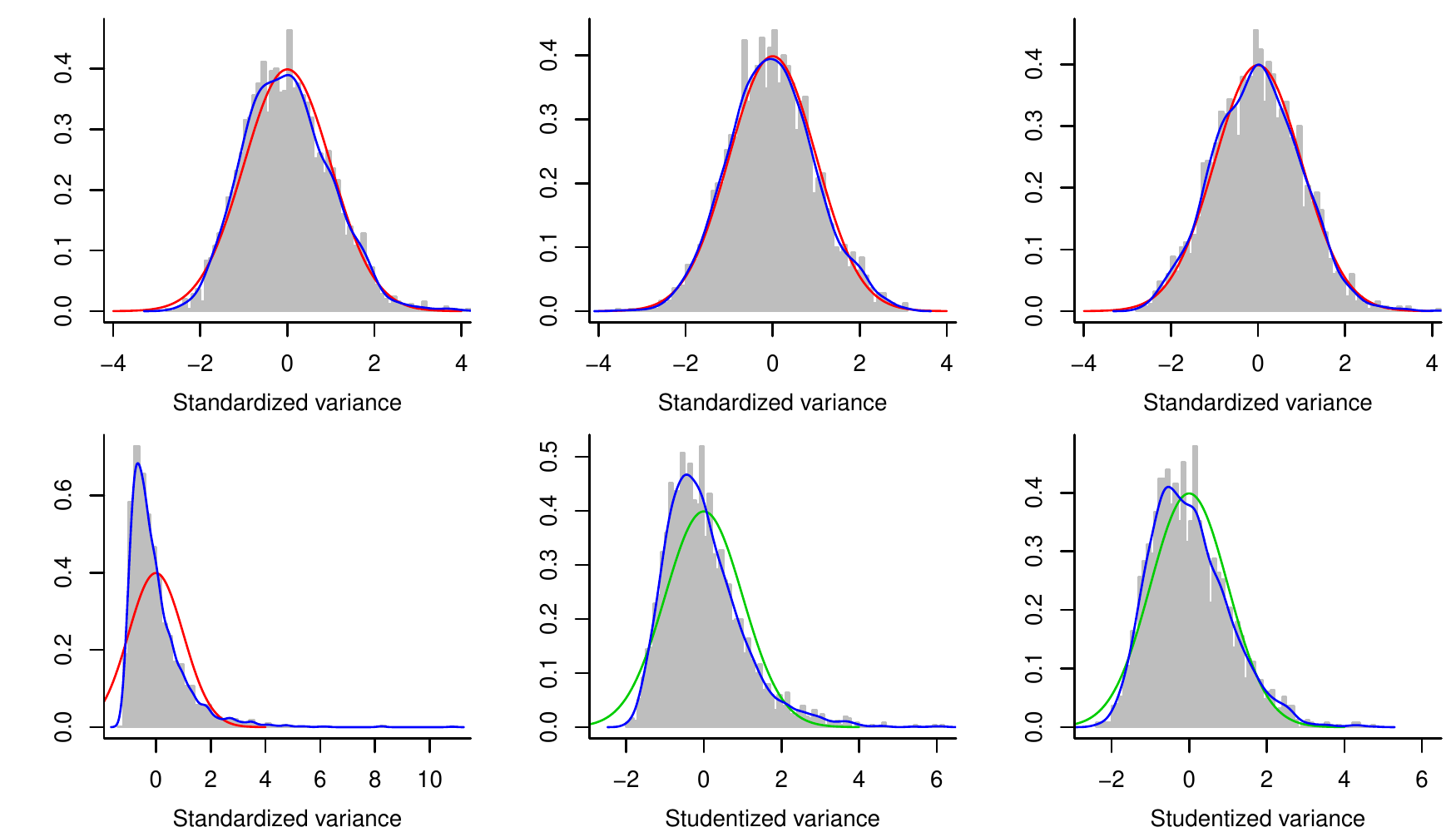}  
  \caption{Histograms of standardized and studentized variance maximum likelihood estimates. Blue: empirical density (kernel density estimates), red: asymptotic density, green: asymptotic density based on the empirical variance. The top row shows results for the process $Z$ (the Gaussian case), the bottom row for the transformed process $Y=Z^2-\sigma_0^2$ (the non-Gaussian case). The columns are for $n=100,400,900$. All panels are based on $N=2500$ replicates.}
  \label{fig:1}
\end{figure}

We now turn to Theorem~\ref{theorem:CLT:thetaML} and consider the bivariate variance and range maximum likelihood estimation.
That is, we consider the two-dimensional maximum likelihood estimates of $(\sigma_0^2,\rho_0)$ in the Gaussian case and of $(2 \sigma_0^2,\rho_0/2)$ in the non-Gaussian case.
 Again, we do not observe many surprises. Skewness of the empirical distribution is slightly higher compared to the single variance parameter estimation, and convergence is slightly slower, as is illustrated in Figure~\ref{fig:2}. 

For the general setting, when estimating jointly the variance and range parameter, the asymptotic bivariate covariance matrix is challenging to compute (see Theorem \ref{theorem:CLT:thetaML}) and thus 
Figure~\ref{fig:2} illustrates the empirical densities and densities based on the empirical bivariate covariance matrix.

\begin{figure}[t]
  \centering
  \includegraphics[width=\textwidth]{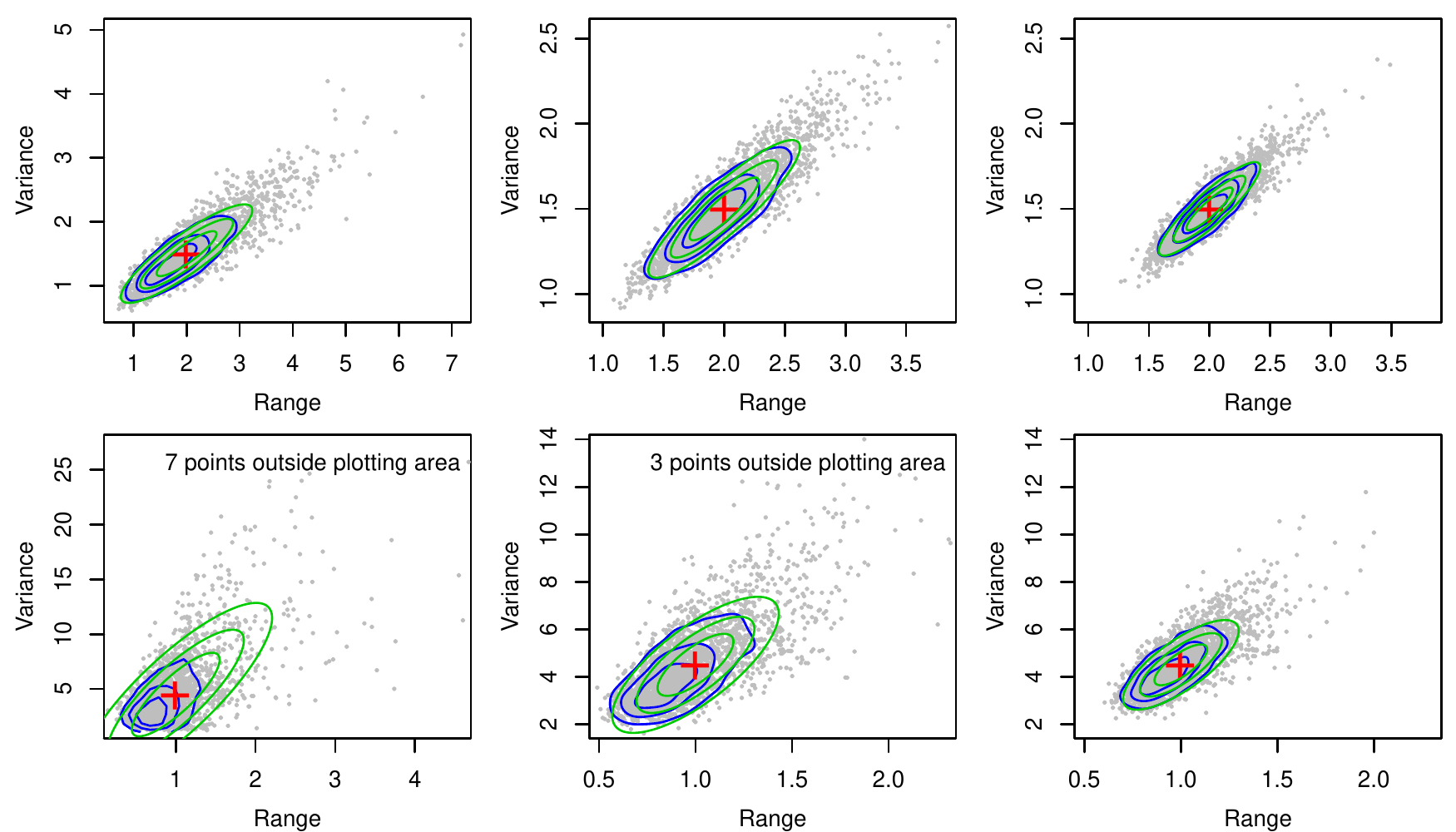}  
  \caption{Scatter plots of variance  and range maximum likelihood estimates. Blue: contour lines of kernel density estimates; green: contour lines of asymptotic density based on the empirical bivariate covariance matrix; red cross: true mean (true parameter values). The top row shows results for the process $Z$ (Gaussian case), the bottom row for the transformed process $Y=Z^2-\sigma_0^2$ (non-Gaussian case). The columns are for $n=100,400,900$. All panels are based on $N=2500$ replicates.}
  \label{fig:2}
\end{figure}

\medskip

We now consider not only maximum likelihood estimation of the variance and range, but also cross validation estimation of the range (see the beginning of Section \ref{subsection:CV}).
We have observed that the range estimates based on cross validation are much more variable, and in many situations the maximum was attained at the (imposed) boundary. Here we used the bound $[2/15, 12]$, i.e., smaller than the minimal distance between two observation locations and 6 (resp. 12) times the diameter of the observation points of $Z$ (resp. $Y$). 
Estimates at or close to the boundary indicate convergence issues and would imply a second, possibly manual, inspection. For the reported results, we eliminated all cross validation cases that yielded estimates outside $[0.14,11.4]$.

Figure~\ref{fig:MSE} shows the mean squared error,  squared bias and variance of $\hat{\sigma}_{\text{ML}}^2$, $\hat{\rho}_{\text{ML}}$ and $\hat{\rho}_{\text{CV}}$ under different settings. For maximum likelihood, we consider the univariate case (one parameter is estimated while the other is known) and the bivariate case (both parameters are jointly estimated). For cross validation, only the range parameter is estimated (see the beginning of Section \ref{subsection:CV}), and thus only the univariate case is considered. The mean squared error is dominated by the variance component. Univariate maximum likelihood estimation for Gaussian cases have low bias and the lowest variance (top left and right panel). Joint maximum likelihood estimation has a somewhat larger variance than individual estimation.
Surprisingly, cross validation for Gaussian cases has a higher variance compared to cross validation for non-Gaussian cases.

\begin{figure}[ht]
  \centering
  \includegraphics[width=\textwidth]{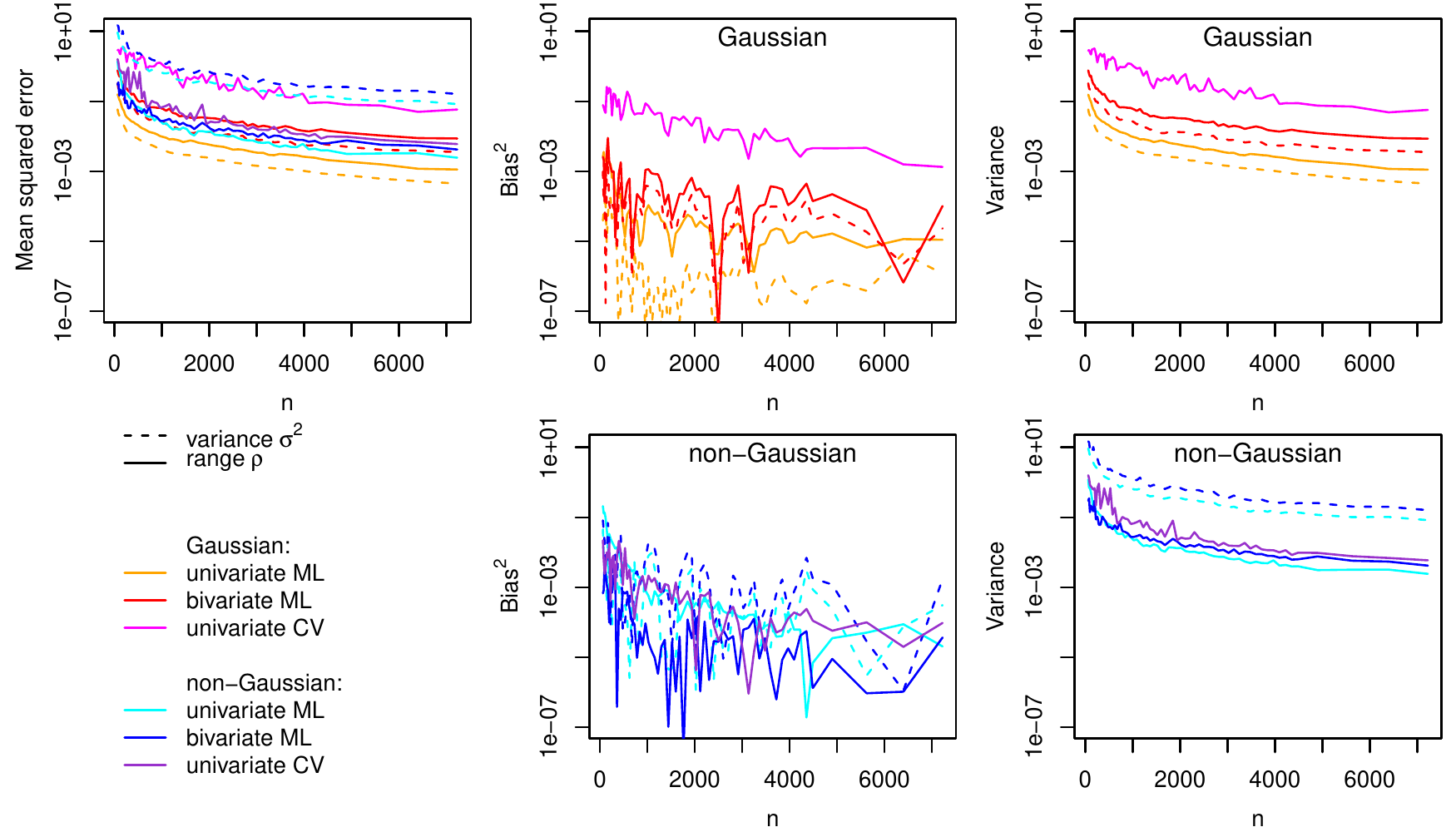}  
  \caption{Mean squared error (top left), squared bias (middle column) and variance (right column) as a function of $n$ for different settings in
    log-scale.  The variance parameter is represented in dashed lines, the range parameter in solid lines. Process $Z$ is shown by reddish colors (Gaussian case),
    transformed process  $Y=Z^2-\sigma_0^2$ in blueish colors (non-Gaussian case).  The panels are based on $N=250$ replicates that did not reveal any convergence issues.}
  \label{fig:MSE}
\end{figure}

Recall that Theorems \ref{theorem:CLT:thetaML}, \ref{theorem:CLT:thetaCV} and \ref{theorem:CLT:joint} show that, as $n$ increases, the distribution of the standardized estimation error is close to a Gaussian distribution in terms of the metric $d_w$. 
In Figure~\ref{fig:3}, we illustrate this by computing one-dimensional Wasserstein distances between the empirical distribution of the standardized estimation errors and Gaussian distributions. 
The figure shows the Wasserstein distance ($p=1$) as a function of the number of observation locations $n$ for individual parameters and for specific bivariate settings (similarly as for Figure \ref{fig:MSE}). In each case, the samples have been centered around the true mean (true parameter values) and standardized by an empirical standard deviation ($n$-weighted average over all the samples). Their empirical distribution is compared
to the standardized Gaussian distribution.
The top left panel shows that the densities of the cross validation parameters are converging slowest whereas their mean squared error is comparable (see Figure \ref{fig:MSE}); the densities are highly skewed and thus lead to much larger Wasserstein distances compared to the distributions of the maximum likelihood derived parameters.
For the bivariate maximum likelihood estimation the marginal distributions have very similar  Wasserstein distances; in the center panels: the dashed and solid colored lines are visually hardly separable.  As suggested by the individual panels of Figures~\ref{fig:1} and~\ref{fig:2}, convergence in the Gaussian case is much faster compared to the non-Gaussian case.
The right column of Figure~\ref{fig:3} illustrates the joint asymptotic normality of the range parameter estimators by maximum likelihood and cross validation.
The gray lines there illustrate Theorem \ref{theorem:CLT:joint} and are Wasserstein distances for linear combinations of the range estimates by maximum likelihood and cross validation, i.e., $\lambda\hat{\rho}_\text{ML}+(1-\lambda)\hat{\rho}_\text{CV}$ for $\lambda=j/10$, $j=1,\dots,9$.
The highly skewed distribution of the cross validation-estimated range parameter for Gaussian processes is clearly visible. In the non Gaussian case, the effect of the skewness is less pronounced since the maximum likelihood is skewed as well.

\begin{figure}[ht]
  \centering
  \includegraphics[width=\textwidth]{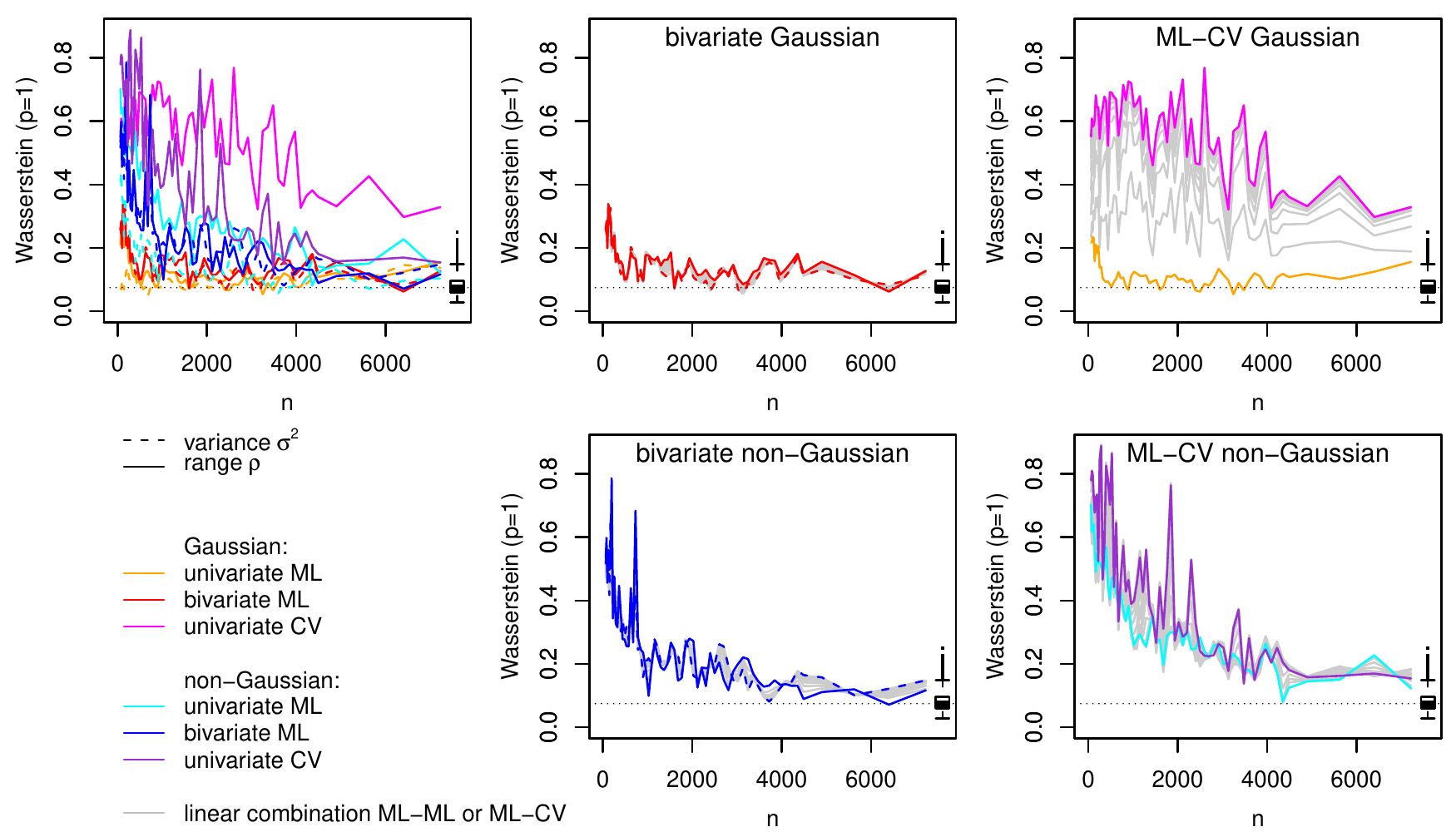}  
  \caption{Wasserstein distance ($p=1$). Top left: marginal for each parameter. Center panels: bivariate estimation of range and variance by maximum likelihood as in Theorem~\ref{theorem:CLT:thetaML} and linear combinations thereof; right panels: univariate estimation of the range parameter by maximum likelihood and cross validation and with linear combinations of these two estimators as in Theorem~\ref{theorem:CLT:joint}. Top middle and right panels: Gaussian cases. Lower row panels: non-Gaussian case. The colors and line styles follow those in Figure~\ref{fig:MSE}. The gray lines are Wasserstein distances for estimates based on linear combinations of maximum likelihood (center column) and of maximum likelihood and cross validation (right column). The panels are based on $N=250$ replicates that did not reveal any convergence issues. The small boxplot on the right in each panel shows the Wasserstein distance for sample size $n=250$ of $10000$ realizations of $\cN[0,1]$; the horizontal dotted line shows the median thereof.}
  \label{fig:3}
\end{figure}

\section{Conclusion}

We have shown that the covariance parameters of transformed Gaussian processes can be estimated by cross validation and Gaussian maximum likelihood, with the same rate of convergence as in the case of non-transformed Gaussian processes. In particular, Gaussian maximum likelihood works well asymptotically, despite the fact that the observations do not have a Gaussian distribution. Hence Gaussian maximum likelihood is here robust with respect to non-Gaussian data. This provides the first step of a theoretical validation of the use of Gaussian maximum likelihood in frequent cases where the data are non-Gaussian.

In future research, it would be interesting to extend the results of this paper to other classes of non-Gaussian random fields rather than only transformed Gaussian processes. In addition, the asymptotic analysis of estimators of the transformation of transformed Gaussian processes is of great interest.

\appendix

\section{Proofs}\label{app}

\subsection{Technical results}

\begin{lemma} \label{lemma:finite:mean:function:gaussian:vector}
Let $q \in \IN$ be fixed. Let $g: \IR^q \to \IR^+$ be fixed and satisfy $g(x) \leq \Csup \exp( \Csup |x| )$. Let $W$ be a Gaussian vector of dimension $q$. Then $\IE[ g(W) ] < \infty$.
\end{lemma}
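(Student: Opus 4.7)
The plan is to bound $g(W)$ by an exponential in the max-norm of $W$, convert the max to a sum via a standard exponential inequality, and then use that each marginal of a Gaussian vector has a finite moment generating function.

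More concretely, I would first note that by hypothesis $g(W) \leq \Csup \exp(\Csup |W|)$, where $|W| = \max_{i=1,\dots,q} |W_i|$. Since the exponential is monotone and the summands are non-negative, $e^{\Csup \max_i |W_i|} = \max_i e^{\Csup |W_i|} \leq \sum_{i=1}^q e^{\Csup |W_i|}$. Taking expectations, it suffices to prove that $\IE[e^{\Csup |W_i|}] < \infty$ for each $i=1,\ldots,q$.

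Now each component $W_i$ is a one-dimensional Gaussian variable (as a marginal of the Gaussian vector $W$), say with mean $\mu_i$ and variance $\sigma_i^2$. Using the crude bound $e^{\Csup |W_i|} \leq e^{\Csup W_i} + e^{-\Csup W_i}$ and the closed form of the Gaussian moment generating function,
\[
\IE[e^{\Csup |W_i|}] \leq e^{\Csup \mu_i + \Csup^2 \sigma_i^2 / 2} + e^{-\Csup \mu_i + \Csup^2 \sigma_i^2/2} < \infty.
\]
Summing over $i$ and multiplying by $\Csup$ then yields $\IE[g(W)] < \infty$.

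There is really no obstacle here: the lemma is an elementary integrability statement whose only content is that exponentials of linear growth in a Gaussian vector are integrable. The only mildly subtle point is remembering that $|\cdot|$ in this paper denotes the sup-norm rather than the Euclidean norm, which is what justifies replacing $e^{\Csup|W|}$ by a sum over coordinates rather than invoking a multivariate MGF argument.
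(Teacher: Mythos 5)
Your proof is correct. You take a genuinely more direct route than the paper: after the same initial reduction (bounding $e^{\Csup\max_i|W_i|}$ by $\sum_i e^{\Csup|W_i|}$, which is where the sup-norm convention matters, as you rightly flag), you finish by invoking the closed-form Gaussian moment generating function, splitting $e^{\Csup|W_i|}\leq e^{\Csup W_i}+e^{-\Csup W_i}$. The paper instead writes $\IE[g(W)]=\int_0^\infty \IP(g(W)\geq t)\,\mathsf{d}t$, union-bounds the tail event over coordinates, applies the Gaussian tail inequality to get $\IP(g(W)\geq t)\lesssim \exp\bigl(-\tfrac12((1/\Csup)\log(t/\Csup)-\max_i|w_i|)^2\bigr)$ for large $t$, and checks integrability of that expression. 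Both arguments are elementary and equally rigorous; yours is shorter and gives an explicit numerical bound, while the paper's layer-cake/tail-bound formulation is the one it later reuses almost verbatim (in the proof of Theorem~\ref{theorem:generic:TCL}, to establish the uniform integrability condition \eqref{eq:in:TCL:lindeberg:like}, where controlling a truncated moment $\IE[|Z|^{2+q}1_{|Z|\geq M}]$ uniformly over a family is more naturally done from tail estimates than from the MGF alone). One small point worth making explicit in your write-up: a marginal $W_i$ may be degenerate ($\sigma_i^2=0$), but the MGF formula you use still holds in that case, so nothing breaks.
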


\begin{proof}
Without loss of generality, we can assume that $W_i$ has variance $1$ for $i=1,\ldots,q$. We let $w_i$ be the mean of $W_i$ for $i=1,\ldots,q$. We have, for $t > 1$,
\begin{align*}
\IP( g(W) \geq t )
& \leq \IP( \Csup \exp(\Csup \max_{i=1,\ldots,q} |W_i| ) \geq t )
\\
& \leq \sum_{i=1}^q
\IP( \Csup \exp(\Csup |W_i| ) \geq t )
\\
& =
\sum_{i=1}^q
\IP( |W_i| \geq (1/\Csup) \log(t / \Csup) )
\\
& \leq
2 \sum_{i=1}^q
\IP( W \geq (1/\Csup) \log(t / \Csup) - |w_i| ),
\end{align*}
where $W \sim \cN[0,1]$.
From the Gaussian tail inequality, we obtain, for 
\[
t \geq \Csup \exp( \Csup ( \max_{i=1,\ldots,q} |w_i| +1 ) ),
\]
that
\[
\IP( g(W) \geq t )
\leq
\frac{2 q }{ \sqrt{2 \pi} }
\exp \Bigl( - (1/2) \bigl( (1/\Csup) \log(t / \Csup) - \max_{i=1,\ldots,q} |w_i| \bigr)^2   \Bigr).
\]
The function of $t$ above is clearly summable as $t \to + \infty$. Hence, we have $\IE[ g(W) ] = \int_{0}^{\infty} \IP( g(W) \geq t ) < + \infty$.
\end{proof}

\begin{lemma} \label{lemma:cov:muliple:products}
Let $X$ be centered Gaussian process with covariance function
$k_X$ satisfying Condition~\ref{cond:sub:exp:ass:pos}. Let $F$ satisfy Condition~\ref{cond:T}. 
Let $W$ be the spatial process $F(X(\cdot))$ and assume that $W$ is centered.
Let $(x_i)_{i \in \mathbb{N}}$ satisfy Condition~\ref{cond:delta}.

Then, we have, for any $r_1,r_2 \in \IN$ and $\Delta \geq 0$,
\[
\sup_{ \substack{ i_1,\ldots,i_{r_1} \in \IN \\
j_1,\ldots,j_{r_2} \in \IN \\
\min_{a=1,\ldots,r_1, b=1,\ldots,r_2} |x_{i_a} - x_{j_b}| \geq \Delta \hspace*{-5mm}
 } }
 \left|
 \Cov( W(x_{i_{1}}) \dots W(x_{i_{r_1}}) , W(x_{j_{1}}) \dots W(x_{j_{r_2}}) )
 \right|
 \leq 
 \Csup e^{- \Cinf \Delta},
\]
where $\Csup$ and $\Cinf$  depend on $r_1 , r_2$ but not on $\Delta$.
\end{lemma}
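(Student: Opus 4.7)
The plan is to show that $U:=(X(x_{i_a}))_{a=1}^{r_1}$ and $V:=(X(x_{j_b}))_{b=1}^{r_2}$ are ``almost independent'' when the two blocks of points are separated by at least $\Delta$, and to convert this into almost-independence of the nonlinear products $\prod_a F(U_a)$ and $\prod_b F(V_b)$. Without loss of generality, I would first assume that the points $x_{i_1},\dots,x_{i_{r_1}}$ are pairwise distinct, and similarly for the $x_{j_b}$'s: any coincidences merge into factors of the form $F^m$, which still satisfy Condition~\ref{cond:T} with new constants depending on $r_1,r_2$. Both reduced sets of points then remain finite sub-collections of the asymptotically well-separated sequence $(x_i)_{i\in\IN}$.

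Step 1, regression decomposition. Let $\Sigma_{UU},\Sigma_{VV},\Sigma_{VU}$ be the relevant (cross-)covariance matrices of $U$ and $V$. Set $A:=\Sigma_{VU}\Sigma_{UU}^{-1}$ and $\tilde V:=V-AU$, so that $\tilde V$ is Gaussian and independent of $U$, with $\Cov(\tilde V)=\Sigma_{VV}-A\Sigma_{UV}$. Cauchy's interlacing theorem combined with Condition~\ref{cond:sub:exp:ass:pos}~(ii) (applied to the full sequence $(x_i)_{i\in\IN}$) yields $\|\Sigma_{UU}^{-1}\|_{op}\leq \Csup$ uniformly over the choice of indices, and an analogous upper bound holds for $\|\Cov(\tilde V)\|_{op}$ via Condition~\ref{cond:sub:exp:ass:pos}~(i). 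The latter also gives $|(\Sigma_{VU})_{a,b}|=|k_X(x_{i_b}-x_{j_a})|\leq \Csup e^{-\Cinf\Delta}$, hence
\[
\|A\|_{op}\leq \|\Sigma_{VU}\|_F\cdot \|\Sigma_{UU}^{-1}\|_{op}\leq \Csup e^{-\Cinf\Delta},
\]
while at the same time $\|A\|_{op}\leq \Csup$ uniformly in $\Delta\geq 0$, since $k_X$ is bounded.

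Step 2, covariance identity and first-order expansion. Writing $f(u)=\prod_a F(u_a)$ and $g(v)=\prod_b F(v_b)$, independence of $U$ and $\tilde V$ gives $\IE[(f(U)-\IE f(U))g(\tilde V)]=0$, so
\[
\Cov(f(U),g(V))=\IE\bigl[(f(U)-\IE f(U))\bigl(g(AU+\tilde V)-g(\tilde V)\bigr)\bigr].
\]
Applying the fundamental theorem of calculus to $s\mapsto g(sAU+\tilde V)$ and using $|F|,|F'|\leq \Csup e^{\Csup|\cdot|}$ from Condition~\ref{cond:T}~(i),
\[
|g(AU+\tilde V)-g(\tilde V)|\leq \|A\|_{op}\|U\|\cdot \Csup\, e^{\Csup(\|A\|_{op}\|U\|+\|\tilde V\|)}.
\]
Coupled with $|f(U)-\IE f(U)|\leq \Csup e^{\Csup\|U\|}+\IE[\Csup e^{\Csup\|U\|}]$ and the uniform bound $\|A\|_{op}\leq \Csup$, the exponent $\|A\|_{op}\|U\|$ is absorbed into $e^{\Csup\|U\|}$ with a universal constant. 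Lemma~\ref{lemma:finite:mean:function:gaussian:vector}, combined with the uniform operator-norm bounds on $\Cov(U)$ and $\Cov(\tilde V)$, then controls the resulting expectation by a constant depending only on $r_1,r_2$. Multiplying by the $O(e^{-\Cinf\Delta})$ bound on $\|A\|_{op}$ from Step~1 yields the claim.

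The main obstacle is ensuring that every constant is uniform in the choice of indices and in $\Delta\geq 0$. Three ingredients make this go through: (i) Cauchy interlacing transfers the uniform lower eigenvalue bound of Condition~\ref{cond:sub:exp:ass:pos}~(ii) to every principal sub-Gram $\Sigma_{UU}$, independently of which finite sub-collection we select; (ii) the mere boundedness of $k_X$ gives a uniform upper bound on $\|A\|_{op}$ that is valid for all $\Delta\geq 0$, which is essential so that the exponential envelopes in the mean-value bound remain of the form $e^{\Csup\|U\|}$ with a \emph{universal} constant $\Csup$, rather than one that could blow up as $\Delta\downarrow 0$; (iii) Lemma~\ref{lemma:finite:mean:function:gaussian:vector} guarantees uniform finiteness of the required exponential moments of the Gaussian vectors $U$ and $\tilde V$. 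The preliminary reduction to distinct points is a minor but necessary technicality ensuring $\Sigma_{UU}$ is invertible.
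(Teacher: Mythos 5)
Your proposal is correct and follows essentially the same route as the paper's proof: the regression decomposition $V = AU + \tilde V$ with $\tilde V$ independent of $U$ is exactly the paper's representation $(K_1\epsilon_3,\, MK_1\epsilon_3 + K\epsilon_4)$ with $M = \Sigma_{VU}\Sigma_{UU}^{-1}$, and your first-order expansion of $g$ together with the bounds $\|A\|_{op}\leq \Csup e^{-\Cinf\Delta}$, $\|A\|_{op}\leq\Csup$, and the exponential-moment Lemma~\ref{lemma:finite:mean:function:gaussian:vector} mirrors the paper's Taylor expansion and Cauchy--Schwarz step. The preliminary reduction to distinct points is a sensible technicality that the paper leaves implicit.
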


\begin{proof}
Let $\Delta \geq 0$, $ i_1,\ldots,i_{r_1} \in \IN$ and
$j_1,\ldots,j_{r_2} \in \IN$ such that
$\min_{a=1,\ldots,r_1, b=1,\ldots,r_2} |x_{i_a} - x_{j_b}| \geq \Delta$.

Let $\epsilon_3 \sim \cN[ 0 , I_{r_1} ]$ and $ \epsilon_4 \sim \cN[ 0 , I_{r_2} ]$, $\epsilon_3$ and $ \epsilon_4$ being independent. Let $R$ be the $r_2 \times r_1$ matrix $( k_{X}(  x_{j_a} , x_{i_b} ) )_{a,b}$, let $C_1$ be the $r_1 \times r_1$ matrix $( k_{X}(  x_{i_a} , x_{i_b} ) )_{a,b}$
and let  $C_2$ be the $r_2 \times r_2$ matrix $( k_{X}(  x_{j_a} , x_{j_b} ) )_{a,b}$. Let $M = R C_1^{-1}$. 
Let $K$ be a matrix square root of $C_2 - R C_1^{-1} R^\top$. Let $K_1$ be the unique symmetric matrix square root of $C_1$.

Then the vector $((K_1 \epsilon_3)^\top, (M K_1 \epsilon_3 + K \epsilon_4)^\top )$ has the same distribution as 
\[
\bigl(X(x_{i_{1}}), \dots, X(x_{i_{r_1}}) , X(x_{j_{1}}), \dots, X(x_{j_{r_2}})\bigr).
\]
For $i=1,2$, for $x = (x_1,\ldots,x_{r_i}) \in \IR^{r_i}$, let $f_i(x) = F(x_1) \cdots F(x_{r_i}) \in \IR$. 
Then we have
\begin{align*}
\Cov( W(x_{i_{1}}) \cdots W(x_{i_{r_1}}) , W(x_{j_{1}}) \cdots W(x_{j_{r_2}}) )
& =
\cov \left( f_1( K_1 \epsilon_3 ) , f_2( M K_1 \epsilon_3 + K \epsilon_4  )  \right)
.
\end{align*}

By a Taylor expansion, there exists a random vector $\epsilon_5$ belonging to the segment with endpoints $K \epsilon_4$ and $M K_1 \epsilon_3 +  K \epsilon_4$ such that, with $G_2(\epsilon_5)$ the gradient column vector of $f_2$ at $\epsilon_5$, we have
\[
f_2( M K_1 \epsilon_3 + K \epsilon_4  ) 
=
f_2( K \epsilon_4 )
+
( M K_1 \epsilon_3 )^\top
G_2(\epsilon_5).
\]
This yields
\begin{align*}
\left|
\Cov( W(x_{i_{1}}) \ldots W(x_{i_{r_1}}) , W(x_{j_{1}}) \ldots W(x_{j_{r_2}}) )
\right|
& =
\left|
\Cov( f_1( K_1 \epsilon_3 ) , \epsilon_3^\top K_1^\top M^\top G_2( \epsilon_5 )  ) 
\right|
\\
& \leq 
\sqrt{ \IE [ f_1^2 ( K_1 \epsilon_3 ) ] }
\sqrt{ \IE [ (   \epsilon_3^\top K_1^\top M^\top G_2( \epsilon_5 ) )^2 ]  }.
\end{align*}
From Condition~\ref{cond:sub:exp:ass:pos} ii) and from the equivalence of norms, we obtain $||  M ||_{op} \leq  \Csup e^{- \Cinf \Delta}$ and $||K_1||_{op} \leq \Csup$, where $\Csup$ and $\Cinf$ do not depend on $ i_1,\ldots,i_{r_1},j_1,\ldots,j_{r_2},\Delta$.
By equivalence of norms, we then obtain, with $\Csup$ and $\Cinf$ not depending on  $ i_1,\ldots,i_{r_1}$, $j_1,\ldots,j_{r_2},\Delta$,
\[
\left|
\Cov( W(x_{i_{1}}) \ldots W(x_{i_{r_1}}) , W(x_{j_{1}}) \ldots W(x_{j_{r_2}}) )
\right|
\leq
\Csup
e^{ - \Cinf \Delta}
\sqrt{ \IE [ f_1^2 ( K_1 \epsilon_3 ) ] 
 \IE [ (   || \epsilon_3 || \ ||G_2( \epsilon_5 )|| )^2 ]  }.
\]
Now, 
\[
||\epsilon_5  || \leq ||M K_1 \epsilon_3|| +  ||K \epsilon_4|| 
\leq \Csup \left(
 || \epsilon_3|| +  || \epsilon_4|| 
 \right),
\]
from Condition~\ref{cond:sub:exp:ass:pos} ii),
where, again, $\Csup$ does not depend on  $ i_1,\ldots,i_{r_1},j_1,\ldots,j_{r_2},\Delta$.
Furthermore, $||K_1 \epsilon_3|| \leq \Csup || \epsilon_3 ||$. 
Eventually, we have
\begin{align*}
\big|
\Cov( W(x_{i_{1}})& \cdots W(x_{i_{r_1}}) , W(x_{j_{1}}) \cdots W(x_{j_{r_2}}) )
\big|
\\ & \leq
\Csup
\exp( - \Cinf \Delta)
\sqrt{ \IE [ f_1^2 (K_1  \epsilon_3 ) ] }
\sqrt{ \IE \Bigl[ \bigl(   || \epsilon_3 || \sup_{ ||x|| \leq  \Csup(
 || \epsilon_3|| +  || \epsilon_4|| 
 )  } 
 ||G_2 ( x )||
     \bigr)^2  \Bigr]  }.
\end{align*}
From Condition~\ref{cond:T} i), we have $|f_1(K_1 x)| \leq \Csup e^{\Csup ||x||}$ and $ ||G_2 ( x )|| \leq \Csup e^{\Csup ||x||}$ where $\Csup$ does not depend on $x$ and  $ i_1,\ldots,i_{r_1},j_1,\ldots,j_{r_2},\Delta$. Hence 
the above square roots are finite from Lemma~\ref{lemma:finite:mean:function:gaussian:vector} and do not depend on 
$ i_1,\ldots,i_{r_1}$,
$j_1,\ldots,j_{r_2}$ and $\Delta$. This concludes the proof.
\end{proof}

\begin{lemma} \label{lemma:covijkl}
Consider the setting of Section~\ref{subection:tranformed:GP:framework}, that is $k_Z$ satisfies Condition~\ref{cond:sub:exp:ass:pos} and
$T$ satisfies Condition~\ref{cond:T}.
For $n \in \mathbb{N}$ and $i,j=1,\ldots,n$ we have
\[
\sum_{k,l = 1, \ldots, n}
\left| \Cov \left( y_i  y_j , y_k  y_l \right) \right|
\leq \Csup,
\]
where $\Csup$ does not depend on $n,i,j$.
\end{lemma}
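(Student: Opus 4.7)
The plan is to combine several applications of Lemma~\ref{lemma:cov:muliple:products} into a single pointwise bound on $|\Cov(y_i y_j, y_k y_l)|$ that decays with respect to the geometry of the four locations $s_i, s_j, s_k, s_l$, and then to sum over $(k,l)$ by a geometric case analysis. Abbreviate $u_k := \min(|s_i - s_k|, |s_j - s_k|)$ (the distance from $s_k$ to $\{s_i,s_j\}$), similarly $u_l$, and write $c := |s_k - s_l|$.

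First I would obtain three complementary pointwise bounds. Applying Lemma~\ref{lemma:cov:muliple:products} with $r_1 = r_2 = 2$ to the groups $(s_i,s_j)$ and $(s_k,s_l)$ gives $|\Cov(y_i y_j, y_k y_l)| \leq \Csup e^{-\Cinf \min(u_k, u_l)}$. Applying the same lemma with $(r_1,r_2)=(3,1)$, taking $y_k$ as the singleton and using $\IE[y_k]=0$ to rewrite $\IE[y_iy_jy_ky_l]=\Cov(y_iy_jy_l, y_k)$, produces $|\IE[y_iy_jy_ky_l]| \leq \Csup e^{-\Cinf \min(u_k,c)}$; combined with $|k_Y(s_i-s_j)\,k_Y(s_k-s_l)| \leq \Csup e^{-\Cinf c}$ and the observation that $\min(u_k,c)\leq c$, this yields $|\Cov(y_iy_j,y_ky_l)| \leq \Csup e^{-\Cinf \min(u_k,c)}$. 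The symmetric choice of singleton $y_l$ gives $|\Cov(y_iy_j,y_ky_l)| \leq \Csup e^{-\Cinf \min(u_l,c)}$. Since the maximum of the three pairwise minima of three nonnegative numbers equals their median, these three bounds combine into
\[
|\Cov(y_i y_j, y_k y_l)| \;\leq\; \Csup \, e^{-\Cinf\,\mathrm{med}(u_k,\,u_l,\,c)}.
\]

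The remaining task is to show $\sum_{k,l=1}^{n} e^{-\Cinf\,\mathrm{med}(u_k, u_l, c)} \leq \Csup$, which I would split into three cases.
If $c \geq \max(u_k,u_l)$, then the median equals $\max(u_k,u_l) \geq (u_k+u_l)/2$ and the sum factors: $\sum_{k,l} e^{-\Cinf(u_k+u_l)/2} \leq \bigl(\sum_k e^{-\Cinf u_k/2}\bigr)^2 \leq \Csup$ by the standard shell-counting estimate permitted by Condition~\ref{cond:delta}.
If $c \leq \min(u_k,u_l)$, then the median equals $\min(u_k,u_l)$; by symmetry assume $u_k\leq u_l$; the constraint $|s_k-s_l|\leq u_k$ forces $s_l$ to lie in a ball of radius $u_k$ around $s_k$, which by well-separation contains at most $\Csup u_k^d$ observation locations, so the sum is dominated by $\Csup \sum_k u_k^d e^{-\Cinf u_k} \leq \Csup$.
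Finally, if $c$ is the median, say $u_k \leq c \leq u_l$, the triangle inequality $u_l \leq u_k + c \leq 2c$ yields $c \geq u_l/2$, whence $e^{-\Cinf c} \leq e^{-\Cinf u_l/4}\, e^{-\Cinf c/2}$; summing first in $k$ (using $\sum_k e^{-\Cinf |s_k-s_l|/2} \leq \Csup$) and then in $l$ (using $\sum_l e^{-\Cinf u_l/4} \leq \Csup$) closes the case.

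The principal obstacle is to extract enough decay to be summable: Lemma~\ref{lemma:cov:muliple:products} with the natural $(2,2)$-pairing alone only gives $|\Cov|\leq \Csup e^{-\Cinf\min(u_k,u_l)}$, whose double sum is of order $n$ rather than $O(1)$. The centering $\IE[y_k]=0$ is the key resource, since it allows one to apply Lemma~\ref{lemma:cov:muliple:products} in a $(3,1)$-pattern directly to $\IE[y_iy_jy_ky_l]$, producing additional decay in $|s_k-s_l|$; blending this with the $(2,2)$-bound upgrades the decay rate from the minimum to the median of $(u_k,u_l,c)$, which is precisely the strength needed for the three-case summation to close.
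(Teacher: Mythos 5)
Your proof is correct and follows essentially the same route as the paper's: both rest on Lemma~\ref{lemma:cov:muliple:products} applied with the $(2,2)$ and $(3,1)$ groupings of $\{y_i,y_j,y_k,y_l\}$, the latter made usable via the centering $\IE[y_k]=0$, followed by a geometric case analysis on the relative positions of the four locations. The only difference is organizational: the paper runs the case analysis at the pointwise stage to get $|\Cov(y_iy_j,y_ky_l)|\le \Csup \exp(-\Cinf \max(u_k,u_l))$ and then sums by a single shell count, whereas you record the equivalent median bound (note $\mathrm{med}(u_k,u_l,c)\ge \max(u_k,u_l)/2$ by the triangle inequality) and defer the cases to the summation.
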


\begin{proof}

We let $d( a , (b,c) ) = \min(|a-b|,|a-c|)$ for $a,b,c \in \IR^d$.
It is enough to show that 
\begin{equation} \label{in:proof:TCL:enough:to:show}
\left| \Cov( y_i  y_j , y_k  y_l  ) \right|
\leq \Csup \exp \Big( - \Cinf \max \big( d(s_k,(s_i,s_j)) , d(s_l,(s_i,s_j)) \big) \Big).
\end{equation}
Indeed, let, for $t \geq 0$, $N_{i,j,t}$ be the number of pairs $(k,l)$, with $1 \leq k,l \leq n$ such that 
\[
t \leq \max( d(s_k,(s_i,s_j)) , d(s_l,(s_i,s_j)) ) \leq t+1.
\]
From Condition~\ref{cond:delta}, we can show that we have $\sup_{n \in \IN , i,j=1,\ldots,n} N_{i,j,t} \leq \Csup t^{2d}$.  Hence, we have
\begin{align*}
\sum_{i,j = 1}^n\!\!
\exp \Bigl( - \Cinf \max \bigl( d(s_k,(s_i,s_j)) , d(s_l,(s_i,s_j)) \bigr) \Bigr)
& \leq 
\sum_{k=0}^{+ \infty}
\Csup (k+1)^{2d}
\exp \left( - \Cinf k  \right)
< + \infty.
\end{align*}
Thus, \eqref{in:proof:TCL:enough:to:show} implies the result of the lemma and it suffices to prove \eqref{in:proof:TCL:enough:to:show}.

Let $i,j,k,l \in \{1,\ldots,n\}$ and let $\Delta = \max( d(s_k,(s_i,s_j)) , d(s_l,(s_i,s_j)) )$. By symmetry, we can consider that $d(s_k,(s_i,s_j)) = \Delta$. 

If $|s_k - s_l| \leq | s_i - s_l |$ and
$|s_k - s_l| \leq | s_j - s_l |$, then $| s_i - s_l|  \geq \Delta /2$ and $| s_j - s_l | \geq \Delta /2$. Hence, we can apply Lemma~\ref{lemma:cov:muliple:products} with distance $\Delta/2$ to obtain $| \Cov( y_k y_l , y_i y_j)| \leq \Csup \exp( - \Cinf \Delta/2  )$, where $\Csup$ and $\Cinf$ do not depend on $n,i,j,k,l,\Delta$.

If $|s_i - s_l| \leq | s_k - s_l |$ and
$|s_i - s_l| \leq | s_j - s_l |$, then $| s_k - s_l|  \geq \Delta /2$. We then have
\begin{align} \label{eq:for:proof:summability:covijkl:covijkl}
\Cov( y_i y_j , y_k y_l )
& = 
\IE[y_i y_j y_k y_l]
- \IE[y_i y_j] \IE[y_k y_l]
\nonumber
\\
& =
\Cov( y_iy_jy_l , y_k )
- \Cov( y_i , y_j ) \Cov( y_k , y_l )
\end{align}
since it is assumed that $Y$ has zero-mean. In \eqref{eq:for:proof:summability:covijkl:covijkl}, the first and third covariances are bounded in absolute value by  $\Csup \exp( - \Cinf \Delta/2  )$ from Lemma~\ref{lemma:cov:muliple:products}, because $| s_k - s_l|  \geq \Delta /2$, $| s_k - s_i|  \geq \Delta /2$ and $| s_k - s_j | \geq \Delta /2$. Hence we have
$| \Cov( y_k y_l , y_i y_j)| \leq \Csup \exp( - \Cinf \Delta/2  )$, where $\Csup$ and $\Cinf$ do not depend on $n,i,j,k,l,\Delta$.

If $|s_j - s_l| \leq | s_k - s_l |$ and
$|s_j - s_l| \leq | s_i - s_l |$, we obtain the same bound by symmetry. We have thus considered all possible cases and the proof of \eqref{in:proof:TCL:enough:to:show} is concluded.
\end{proof}

In the context of Theorem~\ref{theorem:generic:TCL}, the following lemma provides an approximation of $V_n$, based on replacing $A$ by a sparse matrix. We remark that a similar approximation was shown in a time series context in \cite{Neumann13}. Nevertheless, we find that our assumptions on the random field $Y$ are more transparent and interpretable than the assumptions in \cite{Neumann13}, where cumulants are used. Because of these differences of assumptions, our proof of the following lemma differs from that in \cite{Neumann13}.

\begin{lemma} \label{lemma:A:sparse:approximation}
Let, for $K,n \in \IN$, $A^{(K)}$ be the $n \times n$ matrix defined by
\[
A^{(K)}_{i,j} = A_{i,j} 1_{ |s_i - s_j| \leq K }.
\]
Then, under the same assumptions as in Lemma~\ref{lemma:covijkl}, we have
\[
\sup_{n \in \mathbb{N}}
n
\Var
\Bigl(
\frac{1}{n}
y^\top
A
y 
-
\frac{1}{n}
y^\top
A^{(K)}
y 
\Bigr)
\to_{K \to \infty} 0.
\]
\end{lemma}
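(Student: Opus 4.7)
The plan is to expand the centered quadratic form explicitly, apply a symmetrization trick to reduce a four-index sum over $\Cov(y_iy_j,y_ky_l)$ to a product of two two-index sums, and then bound each factor separately using Lemma~\ref{lemma:covijkl} and the polynomial decay of $A$.

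First I would set $B = A - A^{(K)}$, so that $B_{i,j} = A_{i,j}\mathbf{1}_{|s_i-s_j|>K}$ inherits the bound $|B_{i,j}| \le C_{\sup}/(1+|s_i-s_j|^{d+C_{\inf}})$, and supported on pairs at distance $>K$. A direct expansion gives
\[
n\,\Var\!\Bigl(\tfrac{1}{n}y^\top A y - \tfrac{1}{n}y^\top A^{(K)} y\Bigr) = \frac{1}{n}\sum_{i,j,k,l=1}^n B_{i,j}B_{k,l}\,\Cov(y_iy_j,y_ky_l).
\]
Next, using $|B_{i,j}B_{k,l}|\le\tfrac12(B_{i,j}^2+B_{k,l}^2)$ together with the symmetry of $|\Cov(y_iy_j,y_ky_l)|$ under $(i,j)\leftrightarrow(k,l)$, this can be bounded by
\[
\frac{1}{n}\sum_{i,j=1}^n B_{i,j}^2\,\sum_{k,l=1}^n |\Cov(y_iy_j,y_ky_l)|,
\]
and Lemma~\ref{lemma:covijkl} controls the inner sum by $C_{\sup}$ uniformly in $i,j,n$. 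Thus the whole expression is at most $(C_{\sup}/n)\sum_{i,j}B_{i,j}^2$.

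Second, I would bound the tail sum of $A^2$. Using the polynomial decay hypothesis and the fact that, by Condition~\ref{cond:delta}, for each fixed $i$ the number of indices $j$ with $|s_i-s_j|\in[t,t+1]$ is $O(t^{d-1})$, one has
\[
\sum_{j:\,|s_i-s_j|>K}\frac{1}{(1+|s_i-s_j|^{d+C_{\inf}})^2}\;\le\;\varepsilon(K),
\]
where $\varepsilon(K)\to 0$ as $K\to\infty$, independently of $i$ and $n$. Hence $\sum_{i,j}B_{i,j}^2 \le n\,C_{\sup}^2\,\varepsilon(K)$, and combining with the previous bound yields a uniform-in-$n$ estimate $C_{\sup}\varepsilon(K)\to 0$.

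The only genuinely nontrivial step is the symmetrization $|B_{i,j}B_{k,l}| \le \tfrac12(B_{i,j}^2+B_{k,l}^2)$ that decouples the four-index covariance sum; I expect this to be the crux, because without it one cannot directly invoke Lemma~\ref{lemma:covijkl}. The remaining ingredients are essentially routine: the covariance sum is summable by Lemma~\ref{lemma:covijkl}, and the squared tail of $A$ vanishes by a standard volume count on asymptotically well-separated points.
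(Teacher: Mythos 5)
Your argument is correct and follows the paper's proof in all essentials: expand $n\Var\bigl(\tfrac1n y^\top(A-A^{(K)})y\bigr)$ as the four-index sum, collapse the sum over $(k,l)$ using Lemma~\ref{lemma:covijkl}, and conclude from the decay of $A-A^{(K)}$ on pairs at distance greater than $K$. The only difference is bookkeeping: where you symmetrize via $|B_{i,j}B_{k,l}|\le\tfrac12(B_{i,j}^2+B_{k,l}^2)$ and control the squared tail row sums, the paper simply bounds one factor by its supremum $\Csup/(1+K^{d+\Cinf})$ (valid since every nonzero entry of $A-A^{(K)}$ corresponds to a distance exceeding $K$) and the other by its row sums via Lemma 4 of \cite{furrer2016asymptotic}; this sidesteps the symmetrization you identify as the crux and yields the explicit rate $O(K^{-(d+\Cinf)})$ rather than an unquantified $\varepsilon(K)$.
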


\begin{proof}
For any $K,n \in \mathbb{N}$ we have
\begin{align*}
n
\Var
\Bigl(
\frac{1}{n}
y^\top
A
y 
-
\frac{1}{n}
y^\top
A^{(K)}
y 
\Bigr)
& = 
\frac{1}{n}
\sum_{i,j,k,l=1}^n
(A - A^{(K)})_{i,j}
(A - A^{(K)})_{k,l}
\Cov( y_i y_j, y_k y_k )
         .
\end{align*}
We observe that $|(A - A^{(K)})_{k,l}|$ is equal to $0$ or is smaller than $\Csup / ( 1+  K^{d+\Cinf} )$ by assumption. Hence we have
\begin{align*}
n
\Var
\Bigl(
\frac{1}{n}
y^\top
A
y 
-
\frac{1}{n}
y^\top
A^{(K)}
y 
\Bigr)
& \leq 
\Csup \frac{1}{1+K^{d + \Cinf}}
\frac{1}{n}
\sum_{i,j,k,l=1}^n
| (A - A^{(K)})_{i,j} |
| \Cov( y_i y_j, y_k y_k )|
\\
& \leq 
\Csup \frac{1}{1+K^{d + \Cinf}}
\frac{1}{n}
\sum_{i,j=1}^n
| (A - A^{(K)})_{i,j} |
\\
& 
\leq
\Csup
\frac{1}{1+K^{d + \Cinf}}
\max_{i=1,\ldots,n}
\sum_{j=1}^n
\frac{ 1 }{ 1 + | s_i - s_j |^{d + \Cinf }}
\\
&
\leq \Csup
\frac{1}{1+K^{d + \Cinf}},
\end{align*}
where we have used Lemma~\ref{lemma:covijkl} and where we have observed that $|(A - A^{(K)})_{i,j}|$ is equal to $0$ or is smaller than $\Csup / \bigl( 1+  |s_i - s_j|^{d+\Cinf} \bigr)$. 
We have also used Lemma 4 in \cite{furrer2016asymptotic} for the last inequality above.
All the above constants $\Csup$ and $\Cinf$ naturally do not depend on $n$, so the lemma is proved.
\end{proof}

\begin{lemma} \label{lemma:alpha:mixing}
Consider the setting of Section~\ref{subection:tranformed:GP:framework}, that is $k_Z$ satisfies Condition~\ref{cond:sub:exp:ass:pos} and
$T$ satisfies Condition~\ref{cond:T}.
Let $a,b \in \mathbb{N}$.
For $i \in \{ 1,\ldots,a \}$, let $\alpha_i \in \mathbb{N}$ and let $I(i,1),\ldots,I(i,\alpha_i) \in \{1 , \ldots,n\}$. For $j \in \{ 1,\ldots,b \}$, let $\beta_j \in \mathbb{N}$ and let $J(j,1),\ldots,J(j,\beta_j) \in \{1 , \ldots,n\}$.
For $i=1,\ldots,a$, let $f_i$ be a function from $\mathbb{R}^{\alpha_i}$ to $\mathbb{R}$.
For $j=1,\ldots,b$, let $g_j$ be a function from $\mathbb{R}^{\beta_j}$ to $\mathbb{R}$.
For $i=1,\ldots,a$, let $v^{(i)} = (f_i(Z(s_{I(i,1)}),\ldots,Z(s_{I(i,\alpha_i)}))$.
For $j=1,\ldots,b$, let $w^{(j)} = (g_j(Z(s_{J(j,1)}),\ldots,Z(s_{J(j,\beta_j)}))$.
 Let 
\begin{align} \label{eq:def:alpha:mixing}
\begin{split}
  &\!\!\alpha \bigl(   \{ v^{(1)} , \ldots,v^{(a)} \} ,
 \{ w^{(1)} , \ldots,w^{(b)} \}  \bigr)
\\  
 & =
 \sup \Bigl\{
 \bigl|
 \IP(A \cap B)
 - \IP(A) \IP(B)
 \bigr|
 ;
 A \in \sigma( \{v^{(1)} , \ldots,v^{(a)} \} )
 ,
 B \in \sigma( \{ w^{(1)} , \ldots,w^{(b)} \} )
 \Bigr\},\!
\end{split}
\end{align}
where, for any set of random variables $\{\epsilon_1,\ldots,\epsilon_r\}$, $\sigma( \{\epsilon_1,\ldots,\epsilon_r\} )$ is the sigma algebra generated by the random variables $\{\epsilon_1,\ldots,\epsilon_r\}$.
Let 
\[
\Delta = \inf_{ \substack{
i \in \{1,\ldots,a\} \\
j \in \{1,\ldots,b\} \\
\tilde{i} \in \{1 , \ldots , \alpha_i\} \\
\tilde{j} \in \{1 , \ldots , \beta_j\}  
} }
|  s_{I(i,\tilde{i})} -  s_{J(j,\tilde{j})} |.
\]
Then, we have
\[
\alpha(   \{ v^{(1)} , \ldots,v^{(a)} \} ,
 \{ w^{(1)} , \ldots,w^{(b)} \}) 
 \leq 
 \Csup e^{ - \Cinf \Delta },
\]
where $\Csup$ and $\Cinf$ may depend on $a, \alpha_1,\ldots,\alpha_a$ but do not depend on $b$, $( J(j,\tilde{j}) )_{ j=1,\ldots,b,\tilde{j}=1,\ldots,\beta_j }$ and $\Delta$.
\end{lemma}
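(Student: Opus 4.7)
My plan is to reduce the bound on the mixing coefficient to a bound on the alpha-mixing between two centered Gaussian vectors obtained by evaluating $Z$ at the relevant observation points, and then control this quantity by total variation distance and Pinsker's inequality.

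\textbf{Step 1 (reduction to Gaussian vectors).} Let $\{I_1,\ldots,I_{n_U}\}$ be the set of distinct indices appearing among the $I(i,\tilde i)$'s, and likewise $\{J_1,\ldots,J_{n_V}\}$ for the $J$'s. Set $q:=\sum_{i=1}^a\alpha_i$, so that $n_U\le q$. Define the Gaussian vectors $U=(Z(s_{I_k}))_{k=1}^{n_U}$ and $V=(Z(s_{J_l}))_{l=1}^{n_V}$. Each $v^{(i)}$ is a Borel function of $U$ and each $w^{(j)}$ a Borel function of $V$, so the mixing coefficient I need to bound satisfies
$$\alpha\bigl(\{v^{(1)},\ldots,v^{(a)}\},\{w^{(1)},\ldots,w^{(b)}\}\bigr)\le \alpha(\sigma(U),\sigma(V)).$$
Working with distinct indices also guarantees that the points used in $U$ and in $V$ are well-separated sub-sequences of $(s_i)_{i\in\IN}$, so that Condition~\ref{cond:sub:exp:ass:pos}\,(ii) applies to the marginal covariance matrices $\Sigma_U$ and $\Sigma_V$.

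\textbf{Step 2 (Pinsker and Gaussian KL).} Since the mixing coefficient is bounded by the total variation distance between the joint and product laws, Pinsker's inequality gives
$$\alpha(\sigma(U),\sigma(V))\le \|P_{U,V}-P_U\otimes P_V\|_{TV}\le\sqrt{\tfrac{1}{2}\,\mathrm{KL}(P_{U,V}\,\|\,P_U\otimes P_V)}.$$
A standard Schur-complement computation for centered Gaussians gives
$$\mathrm{KL}(P_{U,V}\,\|\,P_U\otimes P_V)=-\tfrac{1}{2}\log\det(I-S),\qquad S:=\Sigma_V^{-1/2}\Sigma_{VU}\Sigma_U^{-1}\Sigma_{UV}\Sigma_V^{-1/2},$$
where $\Sigma_{UV}=(k_Z(s_{I_k}-s_{J_l}))_{k,l}$. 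The matrix $S$ is positive semidefinite with $\operatorname{rank}(S)\le n_U\le q$, and $-\log(1-x)\le 2x$ on $[0,1/2]$, so as soon as $\|S\|_{op}\le 1/2$ one has $-\log\det(I-S)\le 2\operatorname{tr}(S)$. It therefore suffices to show that $\operatorname{tr}(S)\le\Csup e^{-\Cinf\Delta}$ uniformly in $b$ and in the $J$-locations.

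\textbf{Step 3 (bounding $\operatorname{tr}(S)$).} Using $\operatorname{tr}(S)=\|\Sigma_V^{-1/2}\Sigma_{VU}\Sigma_U^{-1/2}\|_F^2\le\|\Sigma_U^{-1}\|_{op}\|\Sigma_V^{-1}\|_{op}\|\Sigma_{UV}\|_F^2$, Condition~\ref{cond:sub:exp:ass:pos}\,(ii) applied to the two well-separated sub-sequences bounds both inverse operator norms by $\Csup$. For the Frobenius norm I exploit the bounded row dimension: since $\operatorname{rank}(\Sigma_{UV})\le n_U\le q$,
$$\|\Sigma_{UV}\|_F^2\le q\,\|\Sigma_{UV}\|_{op}^2\le q\,\|\Sigma_{UV}\|_{\infty\to\infty}\|\Sigma_{UV}\|_{1\to 1}.$$
The maximum row sum $\|\Sigma_{UV}\|_{\infty\to\infty}$ is controlled by combining the exponential decay of $k_Z$ from Condition~\ref{cond:sub:exp:ass:pos}\,(i) with the fact that the $s_{J_l}$'s are well-separated and at distance at least $\Delta$ from every $s_{I_k}$: the number of well-separated points in a shell $\{r\le|x-s_{I_k}|<r+1\}$ is $O(r^{d-1})$, so a summation in $r\ge\Delta$ yields $\sum_l|k_Z(s_{I_k}-s_{J_l})|\le\Csup e^{-\Cinf\Delta}$ for any constant strictly smaller than the $\Cinf$ in Condition~\ref{cond:sub:exp:ass:pos}\,(i). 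The maximum column sum $\|\Sigma_{UV}\|_{1\to 1}$ contains at most $n_U\le q$ terms each bounded by $\Csup e^{-\Cinf\Delta}$, so the same type of estimate holds. Multiplying these bounds gives $\operatorname{tr}(S)\le\Csup e^{-2\Cinf\Delta}$ with constants depending only on $q$, the dimension $d$, the separation constant of Condition~\ref{cond:delta} and the covariance constants of Condition~\ref{cond:sub:exp:ass:pos}.

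\textbf{Conclusion and main obstacle.} For $\Delta$ larger than a fixed $\Delta_0$ chosen so that $\|S\|_{op}\le\operatorname{tr}(S)\le 1/2$, the chain $\alpha\le\sqrt{(1/4)\cdot 2\operatorname{tr}(S)}$ yields $\alpha\le\Csup e^{-\Cinf\Delta}$. For $\Delta\le\Delta_0$ the trivial bound $\alpha\le 1/4$ gives the same form of estimate after enlarging the constant. The main difficulty is securing the operator-norm bound on $\Sigma_{UV}$ uniformly in the (possibly unbounded) dimension of $V$; this is resolved by the asymmetry $n_U\le q<\infty$, which forces $\Sigma_{UV}$ to have bounded rank and converts an easily obtained maximum-row-sum estimate into a Frobenius-norm estimate, thereby avoiding any dependence of the constants on $b$ or on the locations $(s_{J(j,\tilde j)})$.
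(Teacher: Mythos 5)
Your proof is correct, but it follows a genuinely different route from the paper. The paper invokes Lemma~1 in Section~2.1 of \cite{doukhan94mixing} (a Kolmogorov--Rozanov-type inequality), which bounds the strong mixing coefficient of two Gaussian vectors by the supremum of $|\Cov(v^\top v_Z, w^\top w_Z)|$ over unit-variance linear combinations; it then uses Condition~\ref{cond:sub:exp:ass:pos}~(ii) to bound $\|v\|$ and $\|w\|$, and sums the squared cross-covariances over shells exactly as you do. You instead bound $\alpha$ by total variation, apply Pinsker's inequality, compute the Gaussian Kullback--Leibler divergence explicitly via a Schur complement, and reduce to $\trace(S)$, which you control by the same quantity $\sum_{k,l} k_Z(s_{I_k}-s_{J_l})^2$ weighted by the inverse-covariance operator norms. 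The two arguments therefore converge on the same core estimate and exploit the same asymmetry (the $U$-side has bounded dimension $q$, the $V$-side does not), which is what keeps the constants free of $b$ and of the $J$-locations. What your route buys is self-containedness: it avoids the external mixing lemma at the price of a slightly longer computation and a square-root loss in the exponential rate, which is immaterial since $\Cinf$ is generic. What the paper's route buys is brevity and no need to worry about absolute continuity of the joint law with respect to the product law; in your argument that nonsingularity is needed for the KL formula, but it does follow from Condition~\ref{cond:sub:exp:ass:pos}~(ii) applied to the union of the (distinct, well-separated) $I$- and $J$-points once $\Delta\geq\Delta_0>0$, with the case of small $\Delta$ absorbed into the constant as you note. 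Your handling of the degenerate regimes (small $\Delta$, the trivial bound $\alpha\leq 1/4$) and of the rank/Frobenius conversion is sound.
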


\begin{proof}
Let $\mathcal{I} =  \{ I(i,\tilde{i}) ; i=1,\ldots,a,\tilde{i}=1,\ldots,\alpha_i \}$ and let
$\mathcal{J} =  \{ J(j,\tilde{j}) ; j=1,\ldots,b,\tilde{j}=1,\ldots,\beta_j \}$. 
In \eqref{eq:def:alpha:mixing}, any of the events $A$ (resp. $B$) is an event defined on the set of random variables $\{ Z( s_{i} ) \}_{i \in \mathcal{I}}$ (resp. $\{ Z( s_{j} ) \}_{j \in \mathcal{J}}$). We thus obtain
\begin{align*}
\alpha \bigl(  \{ v^{(1)} , &\ldots,v^{(a)} \} ,
 \{ w^{(1)} , \ldots,w^{(b)} \}  \bigr)
 \\
 & \leq
 \alpha \bigl(
 \{ Z( s_{i} ) \}_{i \in \mathcal{I}}
 ,
 \{ Z( s_{j} ) \}_{j \in \mathcal{J}}
 \bigr)
 \\
 & :=
 \sup \Bigl\{
 \bigl|
 \IP(A \cap B)
 - \IP(A) \IP(B)
 \bigr|
 ;
 A \in \sigma( \{ Z( s_{i} ) \}_{i \in \mathcal{I}} )
 ,
 B \in \sigma(\{ Z( s_{j} ) \}_{j \in \mathcal{J}} )
 \Bigr\}.
\end{align*}

Let $ I_1<\dots <I_{\bar{\alpha}} $ be such that $\{ I_1,\ldots,I_{\bar{\alpha}} \} = \mathcal{I}$ and let $v_Z = (Z(s_{I_1}) , \ldots , Z(s_{I_{\bar{\alpha}}}))^\top$.
Let $ J_1<\dots <J_{\bar{\beta}} $ be such that $\{ J_1,\ldots,J_{\bar{\beta}} \} = \mathcal{J}$ and let $w_Z = (Z(s_{J_1}) , \ldots , Z(s_{J_{\bar{\beta}}}))^\top$.
From Lemma 1 in Section 2.1 of \cite{doukhan94mixing}, we have
\begin{align} \label{eq:in:lemma:alpha:v:w}
  \begin{split}
     \alpha \bigl(
 \{ Z( s_{i} ) \}_{i \in \mathcal{I}}
 ,&
 \{ Z( s_{j} ) \}_{j \in \mathcal{J}}
 \bigr)
 \\
& 
 \leq
 \sup \Bigl\{
 \bigl|
 \Cov( v^\top v_{Z} , w^\top w_Z  )
\bigr|
;
\Var(  v^\top v_{Z} ) = 1
,
\Var(  w^\top w_{Z} ) = 1
 \Bigr\}.
\end{split}\end{align}
Let $v$ and $w$ be vectors belonging to the set in \eqref{eq:in:lemma:alpha:v:w}. The smallest eigenvalues of the covariance matrices of $v_Z$ and $w_Z$ are larger that a constant $\Cinf$, not depending on $\mathcal{I}$ and $\mathcal{J}$, since $k_Z$ satisfies Condition~\ref{cond:sub:exp:ass:pos}.
Thus we have
\[
1 = \Var(  v^\top v_{Z} )
 = v^\top \Cov(v_Z) v
 \geq 
 \Cinf ||v||^2.
\]
It follows that $||v||^2 \leq \Csup $, where  $\Csup$ does not depend on $\mathcal{I}$, $\mathcal{J}$ and $\Delta$. Similarly $||w||^2 \leq \Csup$.

We have
\begin{align*}
\Cov^2( v^\top v_Z , w^\top w_Z ) 
& \leq
||\Cov( v^\top v_Z ,  w_Z )||^2
||w||^2
\\
&
\leq 
\Csup
\sum_{ j=1 , \ldots ,  \bar{\beta} }
\Cov( v^\top v_Z ,  Z( s_{J_j} ) )^2
\\
& 
\leq
\Csup
||v||^2
\sum_{i=1,\ldots,\bar{\alpha}}
\sum_{ j=1 , \ldots ,  \bar{\beta} }
\Cov(  Z(s_{I_i}) , Z( s_{J_j} ) )^2
\\
& 
\leq 
\Csup
\sum_{i=1,\ldots,\bar{\alpha}}
\sum_{ \substack{j=1,\ldots,n \\ |s_j - s_{i}| \geq \Delta }}
e^{ - \Cinf | s_{I_i} - s_j |},
\end{align*}
by definition of $\Delta$, since $k_Z$ satisfies Condition~\ref{cond:sub:exp:ass:pos} and where $\Csup$ and $\Cinf$ do not depend on $b$, $\mathcal{J}$ and $\Delta$. For any $i \in \{1 , \ldots ,n\}$, the number of indices $j \in \{1 , \ldots ,n\}$ such that $\tilde{\Delta} \leq |s_i- s_j| \leq \tilde{\Delta} + 1$ is smaller than $\Csup \tilde{\Delta}^d$, from Condition~\ref{cond:delta} and where $\Csup$ only depends on $d$. This yields
\begin{align*}
\Cov^2( v^\top v_Z , w^\top w_Z ) 
&
\leq
\Csup 
(\alpha_1+\ldots+\alpha_a)
\sum_{k=0}^{+ \infty}
\Csup ( \Delta+k )^d
e^{ - \Cinf | \Delta + k |}
\\
&
\leq
\Csup
e^{ - \Cinf | \Delta | /2}
(\alpha_1+\ldots+\alpha_a)
\sum_{k=1}^{+ \infty}
( \Delta+k )^d
e^{ - \Cinf | \Delta + k | / 2}
\\
&
\leq 
\Csup
e^{ - \Cinf | \Delta |}
\end{align*}
where the different $\Csup$ and $\Cinf$ 
do not depend on $b$, $\mathcal{J}$ and $\Delta$. This concludes the proof from \eqref{eq:in:lemma:alpha:v:w}. 
\end{proof}

\begin{lemma} \label{lem:product:matrix}
Consider a sequence $(x_i)_{i \in \IN}$ of points in $\IR^d$ satisfying Condition~\ref{cond:delta}.
Let $\tau >0$ be fixed.
For $n \in \IN$, let $(A_{\theta})_{\theta \in \Theta}$
and
$(B_{\theta})_{\theta \in \Theta}$ be families of $n \times n$ matrices. Assume that for all $n \in \IN$, $i,j=1,\ldots,n$ and $\theta \in \Theta$,
\[
\left|
(A_{\theta})_{i,j}
\right|
\leq
\frac{\Csup}{ 1 + |x_i - x_j|^{d + \tau} }
\; \; \;
 \mbox{and}
 \; \; \;
 \left|
(B_{\theta})_{i,j}
\right|
\leq
\frac{\Csup}{ 1 + |x_i - x_j|^{d + \tau} }
\]
 where $\Csup$ does not depend on $n,i,j,\theta$. 
Then we have
for all $n \in \IN$, $i,j=1,\ldots,n$ and $\theta \in \Theta$,
\[
\left|
(A_{\theta} B_{\theta})_{i,j}
\right|
\leq
\frac{\Csup}{ 1 + |x_i - x_j|^{d + \tau} }
\]
 where $\Csup$ does not depend on $n,i,j,\theta$. 
\end{lemma}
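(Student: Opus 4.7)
The plan is the standard ``convolution of polynomially decaying kernels'' argument. First I would fix $n\in\IN$, $\theta\in\Theta$, $i,j\in\{1,\ldots,n\}$, and write
\[
(A_{\theta} B_{\theta})_{i,j}
=
\sum_{k=1}^{n}
(A_{\theta})_{i,k}\,(B_{\theta})_{k,j}.
\]
Then I would split the sum according to which of $|x_i-x_k|$ or $|x_k-x_j|$ is at least $|x_i-x_j|/2$. By the triangle inequality $|x_i-x_j|\leq |x_i-x_k|+|x_k-x_j|$, so the two sets
\[
K_1=\{k:\, |x_i-x_k|\geq |x_i-x_j|/2\},
\qquad
K_2=\{k:\, |x_k-x_j|\geq |x_i-x_j|/2\}
\]
cover $\{1,\ldots,n\}$.

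On $K_1$ I would use the elementary inequality $1+|x_i-x_j|^{d+\tau}\leq 2^{d+\tau}\bigl(1+|x_i-x_k|^{d+\tau}\bigr)$, valid for $k\in K_1$, together with the hypothesis on $A_{\theta}$, to bound
\[
|(A_{\theta})_{i,k}|
\leq
\frac{\Csup}{1+|x_i-x_k|^{d+\tau}}
\leq
\frac{\Csup\,2^{d+\tau}}{1+|x_i-x_j|^{d+\tau}},
\]
uniformly for $k\in K_1$. The factor involving $B_{\theta}$ would then be summed in $k$ using the packing estimate
\[
\sup_{n\in\IN}\;\sup_{i=1,\ldots,n}
\sum_{k=1}^{n}
\frac{1}{1+|x_i-x_k|^{d+\tau}}
\leq \Csup,
\]
which follows from Condition~\ref{cond:delta} (it is the inequality explicitly invoked from Lemma~4 of \cite{furrer2016asymptotic} earlier in the proof of Lemma~\ref{lemma:A:sparse:approximation}). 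This yields a bound $\Csup/(1+|x_i-x_j|^{d+\tau})$ for the $K_1$ portion of the sum. The contribution of $K_2$ is handled symmetrically, pulling out $B_{\theta}$ and summing $A_{\theta}$.

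I do not expect any genuine obstacle: the argument is a two-term split plus a summability estimate, and the $\Csup$ produced depends only on $d$, $\tau$, the separation constant in Condition~\ref{cond:delta}, and the uniform constants assumed on $A_{\theta}$ and $B_{\theta}$, so it is independent of $n$, $i$, $j$ and $\theta$ as required. The only minor point to verify is the elementary inequality $1+u^{d+\tau}\leq 2^{d+\tau}\bigl(1+(u/2)^{d+\tau}\bigr)$ for $u\geq 0$, which is immediate.
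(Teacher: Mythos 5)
Your proposal is correct and follows essentially the same route as the paper: the paper also splits the sum over $\ell$ according to which of $|x_i-x_\ell|$, $|x_j-x_\ell|$ is larger (so that the other is at least $|x_i-x_j|/2$), pulls out the corresponding factor, and sums the remaining one using the packing estimate from Lemma~4 of \cite{furrer2016asymptotic}. The only cosmetic difference is that the paper's two index sets form a partition while yours form a cover, which is immaterial since all terms are nonnegative.
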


\begin{proof}
We have, 
\begin{align*}
\left|
( A_{\theta} B_{\theta} )_{i,j}
\right|
&= 
\left|
\sum_{\ell=1}^n
 (A_\theta)_{i,\ell} (B_\theta)_{\ell,j}
\right|
\\
&\leq 
\sum_{\ell = 1}^n
\frac{\Csup}{ 1 + |x_i - x_\ell|^{d + \tau} }
\frac{\Csup}{ 1 + |x_j - x_\ell|^{d + \tau} }
\\
&\leq 
\underset{
\substack{
\ell = 1,\ldots,n
\\
|x_i - x_{\ell}|
\leq 
|x_j - x_{\ell}|
}
}
{\sum}
\frac{\Csup}{ 1 + |x_i - x_\ell|^{d + \tau} }
\frac{\Csup}{ 1 + (|x_i - x_j|/2)^{d + \tau} }
\\
&\quad +
\underset{
\substack{
\ell = 1,\ldots,n
\\
|x_j - x_{\ell}|
\leq 
|x_i - x_{\ell}|
}
}
{\sum}
\frac{\Csup}{ 1 + (|x_i - x_j|/2)^{d + \tau} }
\frac{\Csup}{ 1 + |x_j - x_\ell|^{d + \tau} }
\\
&\leq 
\Csup
\frac{\Csup}{ 1 + |x_i - x_j|^{d + \tau} }
\max_{a=1,\ldots,n}
\sum_{b=1}^n
\frac{1}{ 1 + |x_a - x_b|^{d + \tau} }
 \\& 
 \leq 
 \frac{\Csup}{ 1 + |x_i - x_j|^{d + \tau} }
\end{align*}
from Lemma 4 in \cite{furrer2016asymptotic}.
\end{proof}

\begin{lemma} \label{lem:Rtheta:Rtheta:m1}
Consider the setting of Section \ref{sub:sec:Framework:estitheta}.
Under Conditions~\ref{cond:cov:Y:theta} and~\ref{cond:cov:Y:theta:fourier}, we have
\begin{equation} \label{eq:lambda1:Rthetam1}
\sup_{\theta \in \Theta}
\lambda_1 ( R_{\theta}^{-1} )
\leq 
\Csup,
\end{equation}
\begin{equation} \label{eq:lambda1:Rtheta}
\sup_{\theta \in \Theta}
\lambda_1 ( R_{\theta} )
\leq 
\Csup,
\end{equation}
and
\begin{equation}  \label{eq:lambda1:dTheta}
\sup_{ \substack{
\theta \in \Theta
\\
\ell = 1,2,3
\\
i_1,\ldots,i_\ell = 1,\ldots,p
}}
\lambda_1
\Bigl(
\frac{\partial R_{\theta}}{ \partial \theta_{i_1} \ldots \partial \theta_{i_\ell} }
\Bigr)
\leq \Csup.
\end{equation}
\end{lemma}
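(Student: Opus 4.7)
My plan is to handle the three claims separately: the upper bounds \eqref{eq:lambda1:Rtheta} and \eqref{eq:lambda1:dTheta} on the spectral radius of $R_\theta$ and its $\theta$-derivatives will follow from straightforward row-sum (Schur-test) bounds, while the lower bound \eqref{eq:lambda1:Rthetam1} on $\lambda_1(R_\theta^{-1})$ will require the Bochner representation of $R_\theta$ combined with a Plancherel--P\'olya style inequality for well-separated frequencies and a compactness argument on $\Theta$.

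For \eqref{eq:lambda1:Rtheta} and \eqref{eq:lambda1:dTheta}, I would use that for any real symmetric matrix $M$ one has $\lambda_1(M) \leq \max_i \sum_j |M_{i,j}|$. Applied to $R_\theta$, the exponential decay \eqref{eq:ktheta:decay} together with the counting bound $|\{ j : |s_i - s_j| \leq r \}| \leq \Csup (1+r)^d$ implied by Condition~\ref{cond:delta} yields $\max_i \sum_j |k_{Y,\theta}(s_i - s_j)| \leq \Csup$ uniformly in $\theta$, $i$, $n$, which is \eqref{eq:lambda1:Rtheta}. For the derivative matrices, I substitute the polynomial bound \eqref{eq:ktheta:smoothness}, reducing the row sums to $\max_i \sum_j 1/(1 + |s_i - s_j|^{d + \Cinf})$, which is uniformly bounded by Lemma 4 of \cite{furrer2016asymptotic}; this gives \eqref{eq:lambda1:dTheta} uniformly in $\theta$, $\ell$ and $i_1, \ldots, i_\ell$.

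For the harder bound \eqref{eq:lambda1:Rthetam1}, equivalent to $\inf_{n, \theta} \lambda_n(R_\theta) > 0$, I fix an arbitrary unit vector $x \in \mathbb{R}^n$ and start from the Bochner representation
\[
x^{\top} R_\theta x = \int_{\mathbb{R}^d} \hat{k}_{Y,\theta}(f) \, |g_x(f)|^2 \, df, \qquad g_x(f) = \sum_{i=1}^n x_i e^{\mathrm{i} f^{\top} s_i},
\]
which is justified by Fubini since \eqref{eq:ktheta:decay} makes $k_{Y,\theta}$ integrable and $\hat{k}_{Y,\theta} \geq 0$ by Bochner. The key step is to invoke a Plancherel--P\'olya type inequality for exponentials with well-separated frequencies (this is precisely the mechanism underlying Lemma~\ref{lemma:lambda:inf:R}; see also \cite{bachoc2016smallest}) to produce constants $R, c > 0$ depending only on $d$ and the separation constant of Condition~\ref{cond:delta} such that $\int_{|f| \leq R} |g_x(f)|^2 \, df \geq c$ for every $n$ and every unit vector $x$. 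By Condition~\ref{cond:cov:Y:theta:fourier}, $\hat{k}_{Y,\theta}(f)$ is jointly continuous and strictly positive on $\Theta \times \mathbb{R}^d$, and hence attains a strictly positive minimum $m > 0$ on the compact set $\Theta \times \{ f : |f| \leq R \}$. Restricting the Bochner integral to $\{|f| \leq R\}$ then gives $x^{\top} R_\theta x \geq m c$ uniformly in $n$, $\theta$ and $x$, which is \eqref{eq:lambda1:Rthetam1}.

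The main obstacle will be \eqref{eq:lambda1:Rthetam1}, and specifically the step that upgrades pointwise positivity of $\hat{k}_{Y,\theta}$ to a uniform lower bound over $\theta \in \Theta$. Localizing the Bochner integral to a fixed compact ball via the Plancherel--P\'olya inequality is essential here: without that step one would need a strictly positive uniform lower bound on $\hat{k}_{Y,\theta}$ over all of $\mathbb{R}^d$, which Condition~\ref{cond:cov:Y:theta:fourier} does not provide. Once the integral is restricted to $\{|f| \leq R\}$, compactness of $\Theta$ together with joint continuity of $\hat{k}_{Y,\theta}$ in $(\theta, f)$ immediately delivers the needed uniform positive lower bound on the spectral density, and the argument closes.
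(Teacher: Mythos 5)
Your proposal is correct and follows essentially the same route as the paper: the paper disposes of \eqref{eq:lambda1:Rtheta} and \eqref{eq:lambda1:dTheta} by citing Lemma~6 of \cite{furrer2016asymptotic} (whose content is exactly your Gershgorin/row-sum argument combined with the packing bound from Condition~\ref{cond:delta}), and of \eqref{eq:lambda1:Rthetam1} by citing Theorem~5 of \cite{bachoc2016smallest}, whose proof is precisely your Bochner representation plus the localized Plancherel--P\'olya lower bound on $\int_{|f|\leq R}|g_x(f)|^2\,df$ and the compactness of $\Theta\times\{|f|\leq R\}$. You have simply unpacked the two cited results rather than invoking them, and the details you supply are accurate.
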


\begin{proof}
Conditions~\ref{cond:delta}, \ref{cond:cov:Y:theta} and \ref{cond:cov:Y:theta:fourier} imply \eqref{eq:lambda1:Rthetam1} from Theorem 5 in \cite{bachoc2016smallest}. Conditions~\ref{cond:delta} and \ref{cond:cov:Y:theta} imply \eqref{eq:lambda1:Rtheta} and \eqref{eq:lambda1:dTheta} from Lemma~6 in \cite{furrer2016asymptotic}.
\end{proof}

\begin{lemma} \label{lem:Rtheta:m1:ij}
Consider the setting of Section \ref{sub:sec:Framework:estitheta}.
Under Conditions~\ref{cond:cov:Y:theta} and \ref{cond:cov:Y:theta:fourier}, we have, for $n \in \mathbb{N}$ and $i,j \in \{1 , \ldots , n\}$,
\[
\sup_{\theta \in \Theta}
\left|
(R_{\theta}^{-1})_{i,j}
\right|
\leq 
\frac{\Csup }{1+|s_i - s_j|^{d + \Cinf}},
\]
where $\Csup$ and $\Cinf$ do not depend on $n,i,j,\theta$.
\end{lemma}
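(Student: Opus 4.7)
The plan is to establish this as the $\theta$-uniform strengthening of Theorem~\ref{theorem:decay:coeff:Rinverse}, by applying that theorem pointwise at each $\theta \in \Theta$ and checking that the resulting constant is independent of $\theta$.

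The uniform version of Condition~\ref{cond:sub:exp:ass:pos} for the family $\{k_{Y,\theta}\}_{\theta \in \Theta}$ is verified in two steps. First, Condition~\ref{cond:cov:Y:theta}, via~\eqref{eq:ktheta:decay}, gives the uniform exponential bound $\sup_{\theta \in \Theta} |k_{Y,\theta}(s)| \leq \Csup \exp(-\Cinf |s|)$, so Condition~\ref{cond:sub:exp:ass:pos}(i) holds for every $k_{Y,\theta}$ with constants independent of $\theta$. Second, Lemma~\ref{lem:Rtheta:Rtheta:m1} (itself using Conditions~\ref{cond:cov:Y:theta} and~\ref{cond:cov:Y:theta:fourier}) gives $\sup_{\theta \in \Theta} \lambda_1(R_\theta^{-1}) \leq \Csup$, equivalently $\inf_{\theta \in \Theta,\, n \in \IN} \lambda_n(R_\theta) \geq 1/\Csup$, which is the uniform analogue of Condition~\ref{cond:sub:exp:ass:pos}(ii). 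Thus every $k_{Y,\theta}$ satisfies Condition~\ref{cond:sub:exp:ass:pos} with one and the same pair of constants, and the sequence $(s_i)_{i \in \IN}$ continues to satisfy Condition~\ref{cond:delta}.

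Fixing any $\tau > 0$, I would then apply Theorem~\ref{theorem:decay:coeff:Rinverse} at each $\theta \in \Theta$. The key step is to inspect the proof of that theorem and confirm that the constant it produces depends on the kernel only through the two constants appearing in Condition~\ref{cond:sub:exp:ass:pos} (plus the geometric data in Condition~\ref{cond:delta} and the target exponent $\tau$); no other $\theta$-dependent quantity enters. Granted this, one obtains $|(R_\theta^{-1})_{i,j}| \leq \Csup/(1+|s_i-s_j|^{d+\tau})$ with $\Csup$ independent of $\theta, n, i, j$, which is the announced bound with the choice $\Cinf = \tau$.

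The only real obstacle is precisely this quantitative bookkeeping inside the proof of Theorem~\ref{theorem:decay:coeff:Rinverse}: one must check that no implicit dependence on the particular kernel (beyond what Condition~\ref{cond:sub:exp:ass:pos} records) creeps into the constant. In the standard strategies for off-diagonal decay of inverses of matrices with off-diagonal decay---e.g., a Jaffard-type iteration, or a Neumann-type expansion of $R_\theta^{-1}$ in which the exponential decay of $k_{Y,\theta}$ controls off-diagonal remainders and the uniform spectral gap controls the geometric rate---the constant depends on the kernel only through exactly these two ingredients, so once that inspection is carried out the uniform conclusion follows with no further argument.
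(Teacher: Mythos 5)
Your proposal is correct and matches the paper's approach: the paper's own proof is a one-line assertion that the argument of Theorem~\ref{theorem:decay:coeff:Rinverse} can be made uniform over $\theta \in \Theta$, which is precisely what you carry out. You in fact supply more detail than the paper does, correctly identifying that the only $\theta$-dependent inputs to that proof are the uniform exponential decay from \eqref{eq:ktheta:decay} and the uniform spectral bounds from Lemma~\ref{lem:Rtheta:Rtheta:m1}.
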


\begin{proof}
One can show that the proof of Theorem~\ref{theorem:decay:coeff:Rinverse} can be made uniform over $\theta \in \Theta$, thus yielding Lemma~\ref{lem:Rtheta:m1:ij}. 
\end{proof}

\begin{lemma} \label{lem:diag:Rtheta:m1:ij}
Consider the setting of Section \ref{sub:sec:Framework:estitheta}.
Under Conditions~\ref{cond:cov:Y:theta} and \ref{cond:cov:Y:theta:fourier}, we have,
\begin{align}
& \inf_{\theta \in \Theta} \lambda_n(R_{\theta})\label{eq:lamN:R}
\geq \Cinf,\\
& \inf_{\theta \in \Theta} \lambda_n(\diag(R_{\theta}^{-1}))
\geq \Cinf.\label{eq:lamN:R:inv}
\end{align}
\end{lemma}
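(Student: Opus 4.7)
The plan is to derive both bounds directly from Lemma~\ref{lem:Rtheta:Rtheta:m1}, since that lemma already gives us uniform control on $\lambda_1(R_\theta)$ and $\lambda_1(R_\theta^{-1})$. No central limit theorem or covariance decay argument is needed here; this is purely a linear-algebra reduction.

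For the first inequality \eqref{eq:lamN:R}, I would use the identity $\lambda_n(R_\theta) = 1/\lambda_1(R_\theta^{-1})$, valid because $R_\theta$ is symmetric positive definite (its positive definiteness for each fixed $\theta$ is guaranteed by Condition~\ref{cond:cov:Y:theta:fourier} together with Condition~\ref{cond:delta}, as invoked in Lemma~\ref{lem:Rtheta:Rtheta:m1}). Bound \eqref{eq:lambda1:Rthetam1} then immediately yields
\[
\inf_{\theta \in \Theta} \lambda_n(R_\theta) = \frac{1}{\sup_{\theta \in \Theta} \lambda_1(R_\theta^{-1})} \geq \frac{1}{\Csup} =: \Cinf.
\]

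For the second inequality \eqref{eq:lamN:R:inv}, the key observation is that $\diag(R_\theta^{-1})$ is a diagonal positive definite matrix, so its smallest eigenvalue equals the smallest of the diagonal entries $(R_\theta^{-1})_{i,i}$. I would then lower-bound these diagonal entries via the variational characterization: since $R_\theta^{-1} \succeq \lambda_n(R_\theta^{-1}) I = \lambda_1(R_\theta)^{-1} I$, we have
\[
(R_\theta^{-1})_{i,i} = e_i^\top R_\theta^{-1} e_i \geq \frac{1}{\lambda_1(R_\theta)}
\]
for every $i = 1, \ldots, n$. Combining this with the uniform bound \eqref{eq:lambda1:Rtheta} from Lemma~\ref{lem:Rtheta:Rtheta:m1} gives $\inf_{\theta \in \Theta} \min_{i} (R_\theta^{-1})_{i,i} \geq 1/\Csup$, which is precisely \eqref{eq:lamN:R:inv}.

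There is no real obstacle: both statements reduce in one line each to bounds already established. The only subtlety worth flagging is that both constants $\Cinf$ produced this way are independent of $n$ and of $\theta$ (the $\Csup$ from Lemma~\ref{lem:Rtheta:Rtheta:m1} has this property by construction), which is what is required for the uniform infimum over $\theta \in \Theta$ and all $n$.
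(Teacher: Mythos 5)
Your proof is correct and takes essentially the same route as the paper's: \eqref{eq:lamN:R} is read off from \eqref{eq:lambda1:Rthetam1} exactly as in the paper, and your Rayleigh-quotient bound $(R_\theta^{-1})_{i,i} = e_i^\top R_\theta^{-1} e_i \geq 1/\lambda_1(R_\theta)$ combined with \eqref{eq:lambda1:Rtheta} is simply a self-contained version of the step the paper delegates to Lemma D.6 of \cite{bachoc14asymptotic}. Both are one-line reductions to Lemma~\ref{lem:Rtheta:Rtheta:m1}, with constants independent of $n$ and $\theta$ as you note, so nothing is missing.
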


\begin{proof}
Equation \eqref{eq:lamN:R} holds from \eqref{eq:lambda1:Rthetam1}. Then, \eqref{eq:lamN:R:inv} follows from \eqref{eq:lamN:R} as in Lemma D.6 in \cite{bachoc14asymptotic}.
\end{proof}

\subsection{Proofs of the main results}

\begin{proof}[{\bfseries Proof of Lemma~\ref{lemma:sub:exp:ass:pos}}]
As a special case of Lemma~\ref{lemma:cov:muliple:products}, $k'$ satisfies Condition~\ref{cond:sub:exp:ass:pos} i).

Let us now show that $k'$ satisfies Condition~\ref{cond:sub:exp:ass:pos} ii).
 Let $(x_i)$ satisfy Condition~\ref{cond:delta}.
Let $n \in \IN$ be fixed and let $R$ be the $n \times n$ covariance matrix $k(x_i-x_j)_{i,j=1,\dots,n}$.
Let $a_1,\ldots,a_n \in \IR$. We have
\begin{align*}
\sum_{i,j=1}^n a_i a_j R_{i,j}
& =
\Var \Bigl( \sum_{i=1}^n a_i F( X(x_i) ) \Bigr).
\end{align*}
We now let $z = (X(x_1),\ldots,X(x_n))^\top$ and $g:\IR^n \to \IR$ be defined by $g(t) = \sum_{i=1}^n a_i F(t_i)$. The gradient of $g$ at $t$ is $ \nabla g(t) =  ( a_1 F'(t_1) , \ldots , a_n F'(t_n) )^\top$. We use the inequality in Theorem 3.7 in \cite{cacoullos1982upper}. This yields
\[
\sum_{i,j=1}^n a_i a_j R_{i,j}
\geq
\IE\left[\nabla g(z) \right]^\top \Cov( z ) \IE\left[\nabla g(z) \right].
\]
From Condition~\ref{cond:sub:exp:ass:pos} ii), we have $\lambda_1(\Cov( z )) \geq \Cinf$. This yields
\begin{align*}
\sum_{i,j=1}^n a_i a_j R_{i,j}
& \geq
\Cinf
\sum_{i=1}^n
\IE^2 \left[ \left( \nabla g(z) \right)_i \right]
\\
& =
\Cinf
\sum_{i=1}^n
a_i^2 \IE[ F'(z_i) ]^2
\\
& =
\Cinf
\Bigl(
\sum_{i=1}^n
a_i^2 
\Bigr)
\IE[ F'(z_1) ]^2.
\end{align*}
From Condition~\ref{cond:T}, the above expectation is non-zero, which concludes the proof.
\end{proof}

\begin{proof}[{\bfseries Proof of Lemma~\ref{lemma:lambda:inf:R}}]

The fact that $k$ satisfies Condition~\ref{cond:sub:exp:ass:pos} ii) follows from Theorem 4 in \cite{bachoc2016smallest}.

Let us now consider the case where $F$ is defined by $F(x) = x^{2r} + u$.
Since we consider a covariance function we can assume that $u = 0$  without loss of generality.
Assume also that $k(0) = 1$ without loss of generality.
From Lemma~\ref{lemma:cov:muliple:products}, $k'$ satisfies Condition~\ref{cond:sub:exp:ass:pos} i). Let us show that Condition~\ref{cond:sub:exp:ass:pos} ii) is satisfied.
 Let $a,b \in \mathbb{R}^d$, let $c = k(a-b)$ and let $\lambda = (1-c^2)^{1/2}$. With $(A_1,A_2) \sim \cN[0,I_2]$, we have
\begin{align*}
k'(a-b)
& =
\Cov( A_1^{2r} , (cA_1 + \lambda A_2)^{2r} )
\\
& =
\Cov \Bigl( A_1^{2r} , \sum_{i=0}^{2r} {{2r}\choose{i}} c^i  \lambda^{2r - i} A_1^i A_2^{2r - i}  \Bigr)
\\
& =
\sum_{i=0}^{2r}
{{2r}\choose{i}} c^{i}  \lambda^{2r - i}
\Cov
 \bigl( 
 A_1^{2r},
A_1^i A_2^{2r - i}
\bigr).
\end{align*}
By independence of $A_1$ and $A_2$, we obtain, for $i=0,\ldots,2r$,
\begin{align} \label{eq:in:proof:lambda:inf:cov}
\Cov
  \bigl( 
 A_1^{2r},
A_1^i A_2^{2r - i}
\bigr)
& =
\IE[A_2^{2r-i}]
\big(
\IE[A_1^{2r+i}]
-
\IE[A_1^{2r}]
\IE[A_1^{i}]
\big).
\end{align}
From Isserlis' theorem, one can show that \eqref{eq:in:proof:lambda:inf:cov} is zero if $i$ is odd and is strictly positive if $i$ is even. As a consequence, we have 
\[
k'(a-b) = \sum_{i=0}^{r}
\alpha_i k(a-b)^{2i}
\]
with $\alpha_1,\ldots,\alpha_r >0$. Hence, the Fourier transform of $k'$ is a linear combination of multiple convolutions of the Fourier transform of $k$, with strictly positive components. Since the Fourier transform of $k$ is strictly positive everywhere, then also the Fourier transform of $k'$ is strictly positive everywhere. Hence, from Theorem 4 in \cite{bachoc2016smallest}, $k'$ satisfies Condition~\ref{cond:sub:exp:ass:pos}~ii).
\end{proof}

\begin{proof}[{\bfseries Proof of Theorem~\ref{theorem:decay:coeff:Rinverse}}]
Condition~1 and Lemma~6 in \cite{furrer2016asymptotic} imply that the spectral norms of $R^{-1}$ and $R$ are bounded functions of $n$. Let $\Csup= \sup_n \lambda_1(R) < \infty $ and $\Cinf= \inf_n \lambda_n(R) >0$.

We write
\begin{align} \label{eq:series:inverse}
  R^{-1}=\frac{1}{\Csup}\Bigl(I-\bigl(I-\frac{R}{\Csup}\bigr)\Bigr)^{-1}=\frac{1}{\Csup}\sum_{\ell=0}^\infty \Bigl(I-\frac{R}{\Csup}\Bigr)^{\ell}.
\end{align}
We remark that the above sum is well-defined because the eigenvalues of $I - R/\Csup$ are between $0$ and $1-\Cinf/\Csup$.

We denote $M=I-\Csup^{-1}R$ and $h_{i,j} = |s_i - s_j|$.
Let $1 \leq A < \infty$ and $a >0$ be fixed such that $M_{i,j} \leq A e^{- 2 a h_{i,j}}$.
Let $\delta =  \inf_{i , j \in \mathbb{N}, i \neq j} |s_i -s_j| >0$ (Condition~\ref{cond:delta}).
Let $D< \infty$ be a constant such that $D \geq 1$ and, for any $L \geq \delta$ and $i \in \mathbb{N}$, the set $\{ j \in \mathbb{N}; |s_i - s_j|  \leq L \}$ has no more than $(D/2) L^d$ elements. 

Let $ 0 < \mu < \infty$ be fixed. We show by induction over $\ell \in \mathbb{N}$ that there exists a constant $1 \leq \varphi < \infty $, depending on $\mu$ but not depending on $\ell,i,j$, such that for $\ell \leq \mu \log( h_{i,j} )$,
\begin{align} 
| (M^\ell)_{i,j} | \leq 
A^\ell \varphi^\ell D^\ell h_{i,j}^{d\ell - d} e^{ - a h_{i,j} }.
 \label{eq:ind} 
\end{align}
In the case $\log( h_{i,j} ) < 0$, there is nothing to prove in \eqref{eq:ind}, so we consider $i,j$ such that $\log( h_{i,j} ) \geq 0$ when proving \eqref{eq:ind}. 

For $\ell=1$, \eqref{eq:ind} holds. Assume that \eqref{eq:ind} holds for some $\ell \in \mathbb{N}$. We have
\begin{align*}
| (M^{\ell+1})_{i,j} | 
& =
\sum_{r = 1}^n 
(M^\ell)_{i,r} M_{j,r} 
\\
& \leq 
\sum_{r = 1}^n 
A^\ell \varphi^\ell D^\ell h_{i,r}^{d\ell - d} e^{ - a h_{i,r} }  
A e^{ - 2 a h_{r,j}} 
\\
& =
A^{\ell+1} \varphi^\ell D^\ell 
\sum_{r = 1}^n 
 h_{i,r}^{d\ell - d} e^{ - a h_{i,r} } 
 e^{ - 2 a h_{r,j}}. 
\end{align*}
Let now $B_i = \{ x \in \mathbb{R}^d ; |x - s_i| \leq  |s_i - s_j|\}$ and $B_j = \{ x \in \mathbb{R}^d ; |x - s_j| \leq  |s_i - s_j|\}$. From the triangle inequality we obtain
\begin{align*}
| (M^{\ell+1})_{i,j} | 
& \leq 
A^{\ell+1} \varphi^\ell D^\ell 
\sum_{ r \in \mathbb{N} ; s_r \in B_i \cup B_j}
h_{i,r}^{d\ell - d} e^{ -  a h_{i,r} } 
 e^{ - 2 a h_{r,j}}
 \\
 & \quad
 +
 A^{\ell+1} \varphi^\ell D^\ell 
\sum_{ r \in \mathbb{N} ; s_r \in B_i^c \cap B_j^c}
h_{i,r}^{d\ell - d} e^{ -  a h_{i,r} } 
 e^{ - 2 a h_{r,j}} 
 \\
 & \leq 
 A^{\ell+1} \varphi^\ell D^\ell 
 2 (D/2) h_{i,j}^d
h_{i,j}^{d\ell-d} 
  e^{ - a h_{i,j} } 
  \\
  & \quad +  
   A^{\ell+1} \varphi^\ell D^\ell    
   e^{-2 a h_{i,j}}
   \sum_{ r \in \mathbb{N} ; s_r \in B_i^c }
   h_{i,r}^{d\ell-d} e^{- a h_{i,r}}
   \\
   & \leq 
   A^{\ell+1} \varphi^\ell 
 D^{\ell+1}
h_{i,j}^{d(\ell+1)-d} 
  e^{ -  a h_{i,j} } 
  \\
  & \quad
  +  
   A^{\ell+1} \varphi^\ell D^\ell    
   e^{ -  a h_{i,j} } 
   \Bigl(
    e^{ -  a h_{i,j} } 
    \sum_{r \in \mathbb{N}}
    Q 
    r^{d \mu \log( h_{i,j} )  }
    e^{ - a (r-1) }
   \Bigr),
\end{align*}
where for the last inequality we let $Q r^d$ be an upper bound on the cardinality of $\{ b \in \mathbb{N} ; |s_b - s_i| \in [ r -1, r ] \}$ for all $i \in \mathbb{N}$. The constant $Q$ is finite and depends only on $d$ and $\delta$ from Condition~\ref{cond:delta}. We also let $\ell \leq \mu \log( h_{i,j} )$ to show the last above inequality.
Hence, in order to finish the proof of \eqref{eq:ind}, it remains to show that the term $(.)$ in the above display is a bounded function of $h_{i,j}$, and to let $\varphi /2 \geq 1$ be a bound for the term $(.)$. 

We have, for $h_{i,j}$ large enough, with $\lceil \cdot \rceil$ the integer ceiling,
\begin{align*}
\Bigl(
    e^{ -  a h_{i,j} } 
    \sum_{r \in \mathbb{N}}
    Q 
    r^{d \mu \log( h_{i,j} )  }
    e^{ - a (r-1) }
   \Bigr)
   & =
Q e^a e^{-a h_{i,j}}   
\sum_{r \in \IN}
r^{ \lceil d \mu \log( h_{i,j} ) \rceil }
e^{-ar}   
\\
&
=
Q e^a e^{-a h_{i,j}}   
\left( 
\frac{2}{a}
\right)^{
\lceil d \mu \log( h_{i,j} ) \rceil
}
\sum_{r \in \IN}
\left(
\frac{ar}{2}
\right)^{
 \lceil d \mu \log( h_{i,j} ) \rceil 
 }
e^{-ar}   
\\
& \leq 
Q e^a e^{-a h_{i,j}}   
\left( 
\frac{2}{a}
\right)^{
\lceil d \mu \log( h_{i,j} ) \rceil
}
\lceil d \mu \log( h_{i,j} ) \rceil!
\sum_{r \in \IN}
e^{ar/2}   
e^{-ar}   
\\
& =
Q e^a e^{-a h_{i,j}}   
\left( 
\frac{2}{a}
\right)^{
\lceil d \mu \log( h_{i,j} ) \rceil
}
\lceil d \mu \log( h_{i,j} ) \rceil!
\frac{1}{1 - e^{-ar/2}}.
\end{align*}
The above function of $h_{i,j}$ clearly goes to $0$ as $h_{i,j}$ goes to $\infty$. Thus, the above term $(.)$ is bounded and thus \eqref{eq:ind} is proved.

Coming back to \eqref{eq:series:inverse}, using \eqref{eq:ind} and using the triangle inequality, we obtain, letting $\Delta = 1 - \Cinf/\Csup$, and for $h_{i,j}$ large enough,
\begin{align} \label{eq:concluding:coeff:inverse}
\left| 
(R^{-1})_{i,j}
\right|
&
\leq 
\Bigl(
\sum_{1 \leq \ell \leq \mu \log(h_{i,j})}
A^\ell \varphi^\ell D^\ell h_{i,j}^{d\ell - d} e^{ - a h_{i,j} }
\Bigr)
+ \sum_{\mu \log(h_{i,j}) \leq \ell \leq \infty}
\Delta^\ell
\notag
\\
& \leq 
\mu \log( h_{i,j} )
(A \varphi D)^{\mu \log( h_{i,j} )}
h_{i,j}^{ d \mu \log( h_{i,j} ) }
e^{ -a h_{i,j} }
+
\frac{\Delta^{\mu \log( h_{i,j} )}}{ 1 - \Delta }
\notag
\\
& = 
\mu \log( h_{i,j} )
(A \varphi D)^{\mu \log( h_{i,j} )}
h_{i,j}^{ d \mu \log( h_{i,j} ) }
e^{ -a h_{i,j} }
+
\frac{h_{i,j}^{\mu \log( \Delta )}}{ 1 - \Delta }.
\end{align}
In the above display, for any $\tau < \infty$ in the statement of Theorem~\ref{theorem:decay:coeff:Rinverse}, we can choose $\mu$ such that $\mu \log( \Delta ) \leq - d - \tau$. Then, it is clear that the first summand in \eqref{eq:concluding:coeff:inverse} is also smaller than a constant (depending on $\tau$) time $h_{i,j}^{ -d-\tau }$. This concludes the proof of Theorem~\ref{theorem:decay:coeff:Rinverse}, since also $\sup_{n \in \mathbb{N}} \max_{i,j=1,\ldots,n} |(R^{-1})_{i,j}|$ is bounded by $\Csup$.
\end{proof}

\begin{proof}[{\bfseries Proof of Theorem~\ref{theorem:generic:TCL}}]

We have
\begin{align}
n \Var (V_n)
=
\frac{1}{n}
\sum_{i,j,k,l=1}^n
A_{i,j} A_{k,l}
\Cov( y_iy_j,y_ky_l ).\label{eq:nVarVn}
\end{align}
From Lemma~\ref{lemma:covijkl}, we obtain 
\begin{align*}
n \Var (V_n)
& \leq
\frac{1}{n}
\Csup
\sum_{i,j=1}^n
|A_{i,j}| \\
& 
\leq
\Csup
\max_{i=1,\ldots,n}
\sum_{j=1}^n
\frac{ 1 }{ 1 + | s_i - s_j |^{d + \Cinf }}
\\
&
\leq \Csup
\end{align*}
from Lemma 4 in \cite{furrer2016asymptotic}. Hence, $n \Var( V_n )$ is bounded as $n \to \infty$.

Assume now that 
\begin{equation} \label{eq:in:proof:TCL:contradiction}
d_w
\left(
\mathcal{L}_n
,
\cN
\left[
0,
n \var(V_n)
\right]
\right)
\not 
\to 
0
\end{equation}
as $n \to \infty$. Because $n \Var(V_n)$ is bounded, there exists a subsequence $\phi(n)$ such that
\begin{equation} \label{eq:in:proof:TCL:contradiction:2}
d_w
\left(
\mathcal{L}_{\phi(n)}
,
\cN
\left[
0,
\phi(n) \var(V_{\phi(n)})
\right]
\right)
\not 
\to 
0
\end{equation}
as $n \to \infty$
and $ \phi(n) \Var(V_{\phi(n)}) \to V \in [0,\infty)$ as $n \to \infty$. It is then simple to show that this implies 
\begin{equation} \label{eq:in:proof:TCL:contradiction:3}
d_w
\left(
\mathcal{L}_{\phi(n)}
,
\cN
\left[
0,
V
\right]
\right)
\not 
\to 
0
\end{equation}
as $n \to \infty$. If $V = 0$, then, from Chebyshev inequality, $\mathcal{L}_{\phi(n)}$ converges to a Dirac mass at zero and so \eqref{eq:in:proof:TCL:contradiction:3} does not hold, yielding a contradiction.

Hence it remains to consider the case $ \phi(n) \Var(V_{\phi(n)}) \to V \in (0,\infty)$ as $n \to \infty$ and where \eqref{eq:in:proof:TCL:contradiction:3} holds.

To reach a contradiction, we will show that 
\[
\sqrt{ \phi(n) } \left( \frac{ V_{\phi(n)} - \IE[V_{\phi(n)}]}{ \sqrt{V} } \right)
\to_{ n \to \infty  }
\cN[ 0 , 1].
\]

To simplify notations in the sequel, without loss of generality, we will consider that $\phi(n) = n$ and show that
\begin{equation} \label{eq:TCL:to:show}
\sqrt{n } \left( \frac{ V_n - \IE[V_{n}]}{ \sqrt{V} } \right)
\to_{ n \to \infty  }
\cN[ 0 , 1],
\end{equation}
where $n \Var(V_n) \to V \in (0,\infty)$ as $n \to \infty$.
From Slutsky's lemma it is sufficient to show that
\begin{equation} \label{eq:TCL:to:show:2}
\sqrt{n } \left( \frac{ V_n - \IE[V_{n}]}{ \sqrt{n \Var(V_n)} } \right)
\to_{ n \to \infty  }
\cN[ 0 , 1].
\end{equation}

For $K \geq 0$, let 
\[
V^{(K)}_n = \frac{1}{n} y^\top A^{(K)} y,
\]
with the notation Lemma~\ref{lemma:A:sparse:approximation}.
We have
\begin{flalign*}
&
\sup_{n \in \mathbb{N}}
\IE
\biggl[
\biggl(
\sqrt{n } \biggl( \frac{ V_n - \IE[V_{n}]}{ \sqrt{n \Var(V_n)} } \biggr)
-
\sqrt{n } \biggl( \frac{ V^{(K)}_n - \IE[V^{(K)}_{n}]}{ \sqrt{n \Var(V^{(K)}_n)} } \biggr)
\biggr)^2
\bigg]
&
\\
& \leq 
2
\sup_{n \in \mathbb{N}}
\Var
\biggl(
\sqrt{n } \biggl( \frac{ V_n - V^{(K)}_n )}{ \sqrt{n \Var(V_n)} } \biggr)
\biggr)
+
2
\sup_{n \in \mathbb{N}}
\Var
\biggl(
\sqrt{n } V^{(K)}_n
\biggl(
\frac{ 1}{ \sqrt{n \Var(V_n)} } 
-
 \frac{ 1}{ \sqrt{n \Var(V^{(K)}_n)} } 
 \biggr)
\biggr)
&
\\
&
\to_{n \to \infty} 0
&
\end{flalign*}
from Lemma~\ref{lemma:A:sparse:approximation} and because $V >0$. Hence, from Theorem 4.2 in \cite{billingsley68convergence} (as in \cite{Neumann13}), it is sufficient to show that there exists $L \in (0,\infty)$ such that for any fixed $K \geq L$, we have
\[
\sqrt{n } \left( \frac{ V^{(K)}_n - \IE[V^{(K)}_{n}]}{ \sqrt{n \Var(V^{(K)}_n)} } \right)
\to^{\mathcal{L}}
\cN[ 0 , 1]
\]
as $n \to \infty$, in order to prove \eqref{eq:TCL:to:show:2} and thus to conclude the proof.
We remark that, because of Lemma~\ref{lemma:A:sparse:approximation}, we have $\big| \liminf_{n \to \infty} n \Var(V^{(K)}_n) - \liminf_{n \to \infty} n \Var(V^{}_n)\big|$ goes to $0$ as $K \to \infty$. Hence, we may take $L$ such that $ \liminf_{n \to \infty} n \Var(V^{(K)}_n) >0$ for $K \geq L$. Hence, up to extracting a subsequence, it is sufficient to show
\begin{equation} \label{eq:TCL:to:show:3}
\sqrt{n } \bigl( V^{(K)}_n - \IE[V^{(K)}_{n}] \bigr)
\to^{\mathcal{L}}
\cN[ 0 , V^{(K)}],
\end{equation}
where $ n \Var( V^{(K)}_n ) \to V^{(K)} >0 $ as $n \to \infty$.
We have
\begin{align*}
 \sqrt{n } \bigl( V^{(K)}_n - \IE[V^{(K)}_{n}] \bigr)
& =
\frac{1}{\sqrt{n}}
\sum_{i,j=1}^n
\left(
y_i
y_j
- \IE[ y_i y_j ]
\right)
A_{i,j}
1_{ |s_i - s_j| \leq K }
\\
& =
\frac{1}{\sqrt{n}}
\sum_{i=1}^n
\Bigl(
\sum_{ \substack{ j=1,\ldots,n \\ |s_i - s_j| \leq K } }
A_{i,j}
\bigl(
T(Z(s_i))
T(Z(s_j))
-
\IE
\left[
T(Z(s_i))
T(Z(s_j))
\right]
\bigr)
\Bigr)
\\
& = 
\frac{1}{\sqrt{n}}
\sum_{i=1}^n
X_{n}(s_i),
\end{align*}
say, where $X_n$ can be interpreted as a centered random field defined on  
$ (s_i)_{i \in \mathbb{N}} $. We will now show that the sequence of random fields $(X_n)_{n \in \mathbb{N}}$ satisfies the conditions of Corollary~1 of \cite{jenish2009central}.

We let, for $k,l \in \mathbb{N}$ and $r \geq 0$ 
\begin{align*}
\bar{\alpha}_{k,l}(r)&
=
\sup_{n \in \mathbb{N}}\;
\sup 
\Big\{
\left|
\IP( A \cap B )
-
\IP(A)
\IP(B)
\right|
;\\
&\quad A \in \sigma( X_n( s_{I_1} ) ,\ldots , X_n( s_{I_{\bar{k}}} ) ),
B \in \sigma( X_n( s_{J_1} ) ,\ldots ,  X_n( s_{J_{\bar{l}}} ) ),
\\
&\quad
\bar{k} \leq k,
\bar{l} \leq l,
I_1,\ldots,I_{\bar{k}},J_1,\ldots,J_{\bar{k}} \in \{1,\ldots,n\},
\min_{\tilde{k} = 1,\ldots,\bar{k},\tilde{l} = 1,\ldots,\bar{l}}
| s_{I_{\tilde{k}}} - s_{J_{\tilde{l}}} |
\geq r
\Big\}.
\end{align*}
Let $N_K = \sup_{n \in \mathbb{N}} \max_{ i=1,\ldots,n } \sum_{j=1}^n 1_{ |s_j - s_i| \leq K } $.
Then $N_K \leq \Csup$, where $\Csup$ depends only on $K$, $d$ and $\delta$ from Condition~\ref{cond:delta}. 
We remark that for $i=1,\ldots,n$, $X_n(s_i)$ is a function of the variables $Z(s_{I(n,i,1)}),\ldots,Z(s_{I(n,i,\gamma(n,i))})$, with $\gamma(n,i) \leq N_K$ and with $|s_{I(n,i,\gamma)} - s_i | \leq K$ for $\gamma=1,\ldots,\gamma(n,i)$. Furthermore, for $|s_i - s_j| \geq r$ we have for $\gamma_i = 1,\ldots,\gamma(n,i)$ and for $\gamma_j = 1,\ldots,\gamma(n,j)$ that $| s_{I(n,i,\gamma_i)} - s_{I(n,j,\gamma_j)} | \geq r - 2 K$. Hence, form Lemma~\ref{lemma:alpha:mixing}, we have, for any $k \in \mathbb{N}$
\[
\sup_{l \in \mathbb{N}}
\bar{\alpha}_{k,l}(r)
\leq 
\Csup e^{ -\Cinf r },
\]
where $\Csup$ and $\Cinf$ may depend on $k$ and $K$.

We now let $D = ( s_i )_{i \in \mathbb{N}}$, $D_n = (s_1,\ldots,s_n)$, $Z_{i,n} = n^{-1/2} X_n(s_i)$ for $n \in \mathbb{N}$ and $i=1,\ldots,n$. We also let $c_{i,n} = n^{-1/2} $ for $n \in \mathbb{N}$ and $i=1,\ldots,n$. We remark that $Z_{i,n} / c_{i,n}$ can be written as $f( w )$ where $w$ is a Gaussian vector of dimension less than $N_K$, with variances $1$ and where $|f(x)| \leq \Csup e^{ \Csup |x|} $, where $\Csup$ does not depend on $n \in \mathbb{N}$ and $i=1,\ldots,n$. One can thus show, from the Cauchy-Schwarz inequality and with the same techniques as in Lemma~\ref{lemma:finite:mean:function:gaussian:vector}, that for any $q>0$
\begin{equation} \label{eq:in:TCL:lindeberg:like}
\lim_{M \to + \infty}
\sup_{n \in \mathbb{N}}
\max_{i=1,\ldots,n}
\IE
\left[
| Z_{i,n} / c_{i,n} |^{2+q}
1_{ | Z_{i,n} / c_{i,n} | \geq M }
\right]
= 0.
\end{equation}
With the previous notation and with \eqref{eq:in:TCL:lindeberg:like}, one can show that all the assumptions of Corollary 1 in \cite{jenish2009central} are satisfied. This shows \eqref{eq:TCL:to:show:3}
and thus concludes the proof.
\end{proof}

\begin{proof}[{\bfseries Proof of Theorem~\ref{theorem:consistency:thetaML}}]
Let $\theta \in \Theta$ be fixed. We have
\[
    \Var( L_{\theta} ) = \frac{1}{n} n \Var \left( \frac{1}{n} \left(y^\top R_{\theta}^{-1} y \right) \right).
\]

From Lemma~\ref{lem:Rtheta:m1:ij} and
Theorem~\ref{theorem:generic:TCL}, applied with $A_n = R_{\theta}^{-1}$, we obtain $\Var( L_{\theta} ) \to 0$ as $n \to \infty$.
For $i=1,\ldots,p$,
\[
   \frac{\partial L_{\theta}}{ \partial \theta_{i}} = \frac{1}{n} \trace\left( R_{\theta}^{-1} \frac{\partial R_{\theta}}{\partial \theta_{i}} \right) + \frac{1}{n} \left(y^\top \left( -R_{\theta}^{-1} \frac{\partial R_{\theta}}{\partial \theta_{i}}R_{\theta}^{-1} \right) y \right),
\]
which can be rewritten for convenience as
\[
 \frac{\partial L_{\theta}}{ \partial \theta_{i}} = \frac{1}{n} \trace\left( P_{\theta,i} \right) + \frac{1}{n} \left(y^\top Q_{\theta,i} y \right)
\]
with
\[
P_{\theta,i} = R_{\theta}^{-1} \frac{\partial R_{\theta}}{\partial \theta_{i}}
\; \; \;
 \mbox{and}
 \; \; \;
 Q_{\theta,i} = -R_{\theta}^{-1} \frac{\partial R_{\theta}}{\partial \theta_{i}}R_{\theta}^{-1}.
\]

The matrices $R_{\theta}^{-1}$ and ${\partial R_{\theta}}/{\partial \theta_{i}}$ are both valid choices for $A_\theta$ and $B_\theta$ in Lemma~\ref{lem:product:matrix}. From Gerschgorin Circle Theorem (GCT) and Lemma 4 in \cite{furrer2016asymptotic}, we obtain $\sup_{\theta \in \Theta} \lambda_1(P^\top_{\theta,i}P_{\theta,i}) \leq \Csup$ and $\sup_{\theta \in \Theta} \lambda_1(Q_{\theta,i}) \leq \Csup$. This, in turn implies that $\sup_{\theta \in \Theta} \rho_1(P_{\theta,i}) \leq \Csup$. It follows that

\begin{align*}
\max_{i=1,\ldots,n} \sup_{\theta \in \Theta}
\left|
\frac{\partial L_{\theta}}{ \partial \theta}
\right|& \leq \sup_{\theta \in \Theta}
\left| \frac{1}{n} \trace\left( P_{\theta,i} \right) + \frac{1}{n} \left(y^\top Q_{\theta,i} y \right) \right|\\
&\leq \Csup + \Csup \frac{||y||^2}{n}\\
&= O_p(1).
\end{align*}

Hence, Theorem~\ref{theorem:consistency:thetaML} can be proved by proceeding as in the proof of Proposition 3.1 in \cite{bachoc14asymptotic}.
\end{proof}

\begin{proof}[{\bfseries Proof of Theorem~\ref{theorem:CLT:thetaML}}]
From the proof of Theorem~\ref{theorem:consistency:thetaML}, we have for $i=1,\ldots,p$
\[
    \partial L_{\theta} / \partial \theta_i  = \frac{1}{n} \trace( P_{\theta,i} ) + \frac{1}{n} (y^\top Q_{\theta,i} y),
\]
where $P_{\theta,i}$ is a $n \times n$ matrix satisfying $\sup_{\theta \in \Theta}
| (P_{\theta,i})_{a,b} | \leq \Csup/(1+|s_a-s_b|^{d+\Cinf})$ and $Q_{\theta,i}$ is a $n \times n$ symmetric matrix satisfying $\sup_{\theta \in \Theta}
| (Q_{\theta,i})_{a,b} | \leq \Csup/(1+|s_a-s_b|^{d+\Cinf})$.

One can check that $\partial L_{\theta_0} / \partial \theta_i $ has mean zero for $i=1,\ldots,p$, since the mean value of $\partial L_{\theta_0} / \partial \theta_i $ is calculated as if $Y$ were a Gaussian process with zero-mean and covariance function $k_{Y,\theta_0}$.
Let $\partial L_{\theta_0} / \partial \theta$ be the gradient column vector of $L_{\theta}$ at $\theta_0$.
From Theorem~\ref{theorem:generic:TCL}, with $\mathcal{L}_{\Sigma,\theta_0,n}$ the distribution of $\sqrt{n} \partial L_{\theta_0} / \partial \theta$, as $n \to \infty$,

\begin{equation} \label{eq:for:TCL:theta:1:ML}
d_w ( \mathcal{L}_{\Sigma,\theta_0,n} , \cN[ 0 , \Sigma_{\theta_0} ] ) \to 0.
\end{equation}

In addition, for $i \in \{ 1 , \ldots , p\}$, the sequence $(n \Var(\partial L_{\theta_0} / \partial \theta_i))$ is bounded, which implies that the elements of $\Sigma_{\theta_0}$ are bounded too.

One can check that the mean value of $\partial L_{\theta_0} / \partial \theta_i \partial \theta_j$ is $(M_{\theta_0})_{i,j}$ (also because this mean value is calculated as if $Y$ were a Gaussian process with zero-mean and covariance function $k_{Y,\theta_0}$).

Also, for $i,j=1,\ldots,p$, we have
\[
\partial L_{\theta} / \partial \theta_i 
\partial \theta_j 
 = \frac{1}{n} \trace( C_{\theta,i,j} ) + \frac{1}{n} (y^\top D_{\theta,i,j} y),
\]
where $C_{\theta,i,j} $ and $D_{\theta,i,j}$ are sums of products of the matrices $R_{\theta}^{-1}$, $R_{\theta}$ and the first and second derivative matrices of $R_{\theta}$ (see e.g., \cite{bachoc14asymptotic}). Hence, from Condition~\ref{cond:cov:Y:theta} and Lemma~\ref{lem:Rtheta:m1:ij} used inside Lemma~\ref{lem:product:matrix}, we have $\sup_{\theta \in \Theta} | (C_{\theta,i,j})_{a,b} | \leq \Csup/(1+|s_a-s_b|^{d+\Cinf})$ and $\sup_{\theta \in \Theta} | (D_{\theta,i,j})_{a,b} | \leq \Csup/(1+|s_a-s_b|^{d+\Cinf})$.

Thus, the variance of $\partial L_{\theta_0} / \partial \theta_i \partial \theta_j$ goes to zero as $n \to \infty$ from Theorem~\ref{theorem:generic:TCL}. Hence 
\begin{equation} \label{eq:for:TCL:theta:2:ML}
\partial L_{\theta_0} / \partial \theta_i \partial \theta_j
\to^p 
(M_{\theta_0})_{i,j}
\end{equation}
as $n \to \infty$.

It can be shown, similarly as in the proof of Proposition 3.3 in \cite{bachoc14asymptotic} that
\begin{equation} \label{eq:for:TCL:theta:3:ML}
\liminf_{n \to \infty}
\lambda_p ( M_{\theta_0} ) >0.
\end{equation}
Hence, \eqref{eq:in:TCL:theta:lambda1} follows.

Then, for $i,j,\ell \in \{1 , \ldots , p\}$, we have
\[
    \partial L_{\theta} / \partial \theta_i \partial \theta_j  \partial \theta_\ell  = \frac{1}{n} \trace( E_{\theta,i,j,\ell} ) + \frac{1}{n} (y^\top F_{\theta,i,j,\ell} y),
\]
where $E_{\theta,i,j,\ell} $ and $F_{\theta,i,j,\ell}$ are sums of products of the matrices $R_{\theta}^{-1}$, $R_{\theta}$ and the first, second and third derivative matrices of $R_{\theta}$. Hence, from Condition~\ref{cond:cov:Y:theta} and Lemma~\ref{lem:Rtheta:m1:ij} used inside Lemma~\ref{lem:product:matrix}, we have $\sup_{\theta \in \Theta} | (E_{\theta,i})_{a,b} | \leq \Csup/(1+|s_a-s_b|^{d+\Cinf})$ and $\sup_{\theta \in \Theta} (F_{\theta,i})_{a,b} | \leq \Csup/(1+|s_a-s_b|^{d+\Cinf})$. Then, from GCT and Lemma 4 in \cite{furrer2016asymptotic}, we have $\sup_{\theta \in \Theta} \rho_1(E_{\theta,i}) \leq \Csup$ and $\sup_{\theta \in \Theta} \rho_1(F_{\theta,i}) \leq \Csup$. Hence, as in the proof of Theorem~\ref{theorem:consistency:thetaML}, we can show

\begin{equation} \label{eq:for:TCL:theta:4:ML}
    \sup_{\theta \in \Theta} \left| \frac{\partial L_{\theta}}{ \partial \theta_i \partial \theta_j \partial \theta_\ell } \right| = O_p(1).
\end{equation}

Also, $\lambda_1( M_{\theta_0} )$ is clearly bounded as $n \to \infty$. From Theorem~\ref{theorem:generic:TCL}, $\lambda_1( \Sigma_{\theta_0} )$ is bounded as $n \to \infty$. Hence, by considering subsequences along which $M_{\theta_0}$ and $\Sigma_{\theta_0}$ converge, and using \eqref{eq:for:TCL:theta:1:ML}, \eqref{eq:for:TCL:theta:2:ML}, \eqref{eq:for:TCL:theta:3:ML} and \eqref{eq:for:TCL:theta:4:ML}, we can proceed as in the proof of Proposition D.10 in \cite{bachoc14asymptotic} and show \eqref{eq:TCL:theta}.
\end{proof}

\begin{proof}[{\bfseries Proof of Theorem~\ref{theorem:consistency:thetaCV}}]
Let $\psi \in \mathcal{S}$ be fixed. We have
\[
    \Var( CV_{\psi} ) = \frac{1}{n} n \Var \left( \frac{1}{n} y^\top C_\psi^{-1}  \diag(C_\psi^{-1})^{-2} C_\psi^{-1} y \right).
\]

From Lemmas~\ref{lem:product:matrix}, \ref{lem:Rtheta:m1:ij} and \ref{lem:diag:Rtheta:m1:ij} (that can be trivially adapted by replacing $\theta$ by $\psi$), as well as
Theorem~\ref{theorem:generic:TCL}, applied with $A_n = C_\psi^{-1}  \diag(C_\psi^{-1})^{-2} C_\psi^{-1}$, we obtain $\Var( CV_{\psi} ) \to 0$ as $n \to \infty$.

For $i=1,\ldots,p-1$,
\[
    \frac{\partial CV_{\psi}}{ \partial \psi_{i}} = \frac{2}{n} y^\top A_{\psi,i} y \label{temp:derivCv}
\]
with
\[
    A_{\psi,i} = C_\psi^{-1} \diag(C_\psi^{-1})^{-2} \left( \diag \left( C_\psi^{-1} \frac{\partial C_{\psi}}{\partial \psi_{i}} C_\psi^{-1} \right) \diag(C_\psi^{-1})^{-1} - C_\psi^{-1} \frac{\partial C_{\psi}}{\partial \psi_{i}} \right) C_\psi^{-1}.\label{temp:P:CV}
\]

As in the proof of Theorem~\ref{theorem:consistency:thetaML}, GCT and Lemma 4 in \cite{furrer2016asymptotic} lead us to $\sup_{\psi \in \mathcal{S}} \lambda_1(A_{\psi,i}^\top A_{\psi,i}) \leq \Csup$, which in turn implies $\sup_{\psi \in \mathcal{S}} \rho_1(A_{\psi,i}) \leq \Csup$. It follows that

\begin{align*}
\max_{i=1,\ldots,p-1} \sup_{\psi \in \mathcal{S}}
\left|
\frac{\partial CV_{\psi}}{ \partial \psi_i}
\right|& \leq \sup_{\psi \in \mathcal{S}}
\left| \frac{2}{n} \left(y^\top A_{\psi,i} y \right) \right|\\
& \leq \Csup \frac{||y||^2}{n}\\
&= O_p(1).
\end{align*}

Hence, Theorem~\ref{theorem:consistency:thetaCV} can also be proved by proceeding as in the proof of Proposition 3.4 in \cite{bachoc14asymptotic}.
\end{proof}

\begin{proof}[{\bfseries Proof of Theorem~\ref{theorem:CLT:thetaCV}}]
From Condition~\ref{cond:cov:Y:theta} and Lemma~\ref{lem:Rtheta:m1:ij} used inside Lemma~\ref{lem:product:matrix}, we have for $i=1,\ldots,p-1$
\[
\frac{\partial CV_{\psi}}{ \partial \psi_i}  = \frac{2}{n} y^\top A_{\psi,i} y,
\]
where $A_{\psi,i}$ is a $n \times n$ matrix satisfying $\sup_{\psi \in \mathcal{S}}
| (A_{\psi,i})_{a,b} | \leq \Csup/(1+|s_a-s_b|^{d+\Cinf})$. As in the proof of Theorem~\ref{theorem:CLT:thetaML}, one can check that $\partial CV_{\psi_0} / \partial \psi_i $ has mean zero for $i=1,\ldots,p-1$.
Let $\partial CV_{\psi_0} / \partial \psi$ be the gradient column vector of $CV_{\psi}$ at $\psi_0$. From Theorem~\ref{theorem:generic:TCL}, with $\mathcal{L}_{\Gamma,\psi_0,n}$ the distribution of $\sqrt{n} \partial CV_{\psi_0} / \partial \psi$, as $n \to \infty$,
\begin{equation} \label{eq:for:TCL:theta:1:CV}
d_w ( \mathcal{L}_{\Gamma,\psi_0,n} , \cN[ 0 , \Gamma_{\psi_0} ] ) \to 0.
\end{equation}
In addition, for $i \in \{1 , \ldots , p-1\}$, the sequence $(n \Var(\partial CV_{\psi_0} / \partial \psi_i))$ is bounded and thus, the elements of $\Gamma_{\psi_0}$ are bounded too.

One can check that the mean value of $\partial CV_{\psi_0} / \partial \psi_i \partial \psi_j$ is $(N_{\psi_0})_{i,j}$. Furthermore, from Theorem~\ref{theorem:generic:TCL}, the variance of $\partial CV_{\psi_0} / \partial \psi_i \partial \psi_j$ goes to zero as $n \to \infty$. Hence, as $n \to \infty$,
\begin{equation} \label{eq:for:TCL:theta:2:CV}
\partial CV_{\psi_0} / \partial \psi_i \partial \psi_j
\to^p 
(N_{\psi_0})_{i,j}.
\end{equation}

It can be shown, similarly as in the proof of Proposition 3.7 in \cite{bachoc14asymptotic} that
\begin{equation} \label{eq:for:TCL:theta:3:CV}
\liminf_{n \to \infty}
\lambda_{p-1} ( N_{\psi_0} ) >0.
\end{equation}

Hence, \eqref{eq:in:TCL:theta:lambda1:CV} follows.
On the other hand, for $i,j=1,\ldots,p-1$, we have
\[
\frac{\partial CV_{\psi}}{ \partial \psi_i 
\partial \psi_j }
 = \frac{1}{n} y^\top D_{\psi,i,j} y,
\]
where $D_{\psi,i,j} $ is computed as a sum of products of the matrices $C_{\psi}^{-1}$, $C_{\psi}$, the first and second derivative matrices of $C_{\psi}$ and the $\diag$ operator (see e.g., \cite{bachoc14asymptotic}). Hence, from Condition~\ref{cond:cov:Y:theta} and Lemma~\ref{lem:Rtheta:m1:ij} used inside Lemma~\ref{lem:product:matrix}, we have $\sup_{\psi \in \mathcal{S}} | (D_{\psi,i})_{a,b} | \leq \Csup/(1+|s_a-s_b|^{d+\Cinf})$.

Similarly, for $i,j,\ell \in \{1 , \ldots , p-1\}$, we have
\[
  \frac{ \partial CV_{\psi}}{ \partial \psi_i \partial \psi_j  \partial \psi_\ell}
  = \frac{1}{n} y^\top E_{\psi,i,j,\ell} y,
\]
where $E_{\psi,i,j,\ell} $ is a sum of products of the matrices $C_{\psi}^{-1}$, $C_{\psi}$, the first, second and third derivative matrices of $C_{\psi}$ and the $\diag$ operator. Hence, from Condition~\ref{cond:cov:Y:theta} and Lemma~\ref{lem:Rtheta:m1:ij} used inside Lemma~\ref{lem:product:matrix}, we have $\sup_{\psi \in \mathcal{S}} | (E_{\psi,i,j,\ell})_{a,b} | \leq \Csup/(1+|s_a-s_b|^{d+\Cinf})$. Then, from GCT and Lemma 4 in \cite{furrer2016asymptotic}, we have $\sup_{\psi \in \mathcal{S}} \rho_1(E_{\psi,i,j,\ell}) \leq \Csup$. Hence, as in the proof of Theorem~\ref{theorem:consistency:thetaML}, we can show, for $i,j,\ell \in \{ 1 , \ldots , p-1\}$,

\begin{equation} \label{eq:for:TCL:theta:4:CV}
    \sup_{\psi \in \mathcal{S}} \left| \frac{\partial CV_{\psi}}{ \partial \psi_i \partial \psi_j \partial \psi_\ell } \right| = O_p(1).
\end{equation}

Also, $\lambda_1( N_{\psi_0} )$ is clearly bounded as $n \to \infty$. From Theorem~\ref{theorem:generic:TCL}, $\lambda_1( \Gamma_{\psi_0} )$ is bounded as $n \to \infty$. Hence, by considering subsequences along which $N_{\psi_0}$ and $\Gamma_{\psi_0}$ converge, and using \eqref{eq:for:TCL:theta:1:CV}, \eqref{eq:for:TCL:theta:2:CV}, \eqref{eq:for:TCL:theta:3:CV} and \eqref{eq:for:TCL:theta:4:CV}, we can proceed as in the proof of Proposition D.10 in \cite{bachoc14asymptotic} and show \eqref{eq:TCL:theta:CV}.
\end{proof}

\begin{proof}[{\bfseries Proof of Lemma~\ref{lem:cond:CV:implies:ML}}]
We have, with $\theta = (\sigma^2 , \psi) \in \Theta$ and $\theta_0 = (\sigma_0^2 , \psi_0)$,
\begin{flalign} \label{eq:for:cond:CV:imply:cond:ML:one}
&
\frac{1}{n}
\sum_{i,j=1}^n
\left(
c_{Y,\psi}( s_i-s_j )
-
c_{Y,\psi_0}( s_i-s_j )
\right)^2
\notag
\\
& =
\frac{1}{n}
\sum_{i,j=1}^n
\left(
\frac{k_{Y,\theta}( s_i-s_j )}{k_{Y,\theta}(0 )}
-
\frac{k_{Y,\theta_0}( s_i-s_j )}{k_{Y,\theta_0}( 0 )}
\right)^2
& \notag \\
& \leq 
\frac{2}{n}
\sum_{i,j=1}^n
\left(
\frac{k_{Y,\theta}( s_i-s_j )}{k_{Y,\theta}(0 )}
-
\frac{k_{Y,\theta_0}( s_i-s_j )}{k_{Y,\theta}( 0 )}
\right)^2
+
\frac{2}{n}
\sum_{i,j=1}^n
\left(
\frac{k_{Y,\theta_0}( s_i-s_j )}{k_{Y,\theta}(0 )}
-
\frac{k_{Y,\theta_0}( s_i-s_j )}{k_{Y,\theta_0}( 0 )}
\right)^2
\notag
\\
& \leq 
\frac{\Csup }{n}
\sum_{i,j=1}^n
\left(
k_{Y,\theta}( s_i-s_j )
-
k_{Y,\theta_0}( s_i-s_j )
\right)^2
+
\left(
\frac{1}{k_{Y,\theta}(0 )}
-
\frac{1}{k_{Y,\theta_0}(0 )}
\right)^2
\Csup,
&
\end{flalign}
where the second $\Csup$ comes from Lemma~\ref{lem:Rtheta:Rtheta:m1} and from the classical control of the square Frobenius norm by $n$ times the largest square eigenvalue, for $n \times n$ symmetric matrices. 
If Condition \ref{cond:asymptotic:identifiability:theta:CV} holds, then for all $\alpha>0$,
\[
\liminf_{n \to \infty}
\inf_{ || \psi - \psi_0 || \geq \alpha }
\frac{1}{n}
\sum_{i,j=1}^n
\left(
c_{Y,\psi}(s_i - s_j)
-
c_{Y,\psi_0} (s_i - s_j)
\right)^2
> 0.
\]
Consider a sequence $\theta_{n} = (\sigma_n^2 , \psi_n) \in \Theta$ such that $||\theta_n - \theta_0|| \geq \alpha$. If we can extract a subsequence $n_m$ such that $ \liminf_{m \to \infty} ( \sigma_{n_m}^2 - \sigma_0)^2 >0$, then clearly 
\[
\liminf_{m \to \infty}
\frac{1}{n_m}
\sum_{i,j=1}^{n_m}
\left(
k_{Y,\theta_{n_m}}( s_i-s_j )
-
k_{Y,\theta_0}( s_i-s_j )
\right)^2
> 0
\]
by considering the diagonal terms in the above double sum. If we can not extract such a subsequence, then we can extract a subsequence $n_m$ such that $||\psi_{n_m} - \psi_0|| \geq \alpha/2$ and $\sigma^2_{n_m} \to \sigma_0^2$ as $m \to \infty$. Along this subsequence 
\[
\liminf_{m \to \infty}
\frac{1}{n_m}
\sum_{i,j=1}^{n_m}
\left(
k_{Y,\theta_{n_m}}( s_i-s_j )
-
k_{Y,\theta_0}( s_i-s_j )
\right)^2
> 0
\]
from \eqref{eq:for:cond:CV:imply:cond:ML:one}. Hence, Condition \ref{cond:asymptotic:identifiability:theta} holds.

Let us now assume that Condition~\ref{cond:asymptotic:identifiability:local:theta} does not hold. We have, with $\theta = (\sigma^2 , \psi) \in \Theta$, with $\theta_0 = (\sigma_0^2 , \psi_0)$ and with $(\beta_1 , \ldots , \beta_p) = (\beta_1,\alpha_1,\ldots,\alpha_{p-1})$, where $\beta_1 \in \IR$ is arbitrary,
\begin{flalign} \label{eq:for:cond:CV:imply:cond:ML:two}
&
\frac{1}{n}
\sum_{i,j=1}^n
\left(
\sum_{\ell=1}^{p-1}
\alpha_{\ell}
\frac{\partial}{\partial \psi_{\ell}}
c_{Y,\psi_0  }( s_i-s_j )
\right)^2
&
\notag
\\
&
=
\frac{1}{n}
\sum_{i,j=1}^n
\left(
\sum_{\ell=1}^p
\beta_{\ell}
\frac{\partial}{\partial \theta_{\ell}}
\left(
\frac{k_{Y,\theta_0}( s_i-s_j )}{k_{Y,\theta_0}(0 )}
\right)
\right)^2
& \notag \\
& = 
\frac{1}{n}
\sum_{i,j=1}^n
\left(
\sum_{\ell=1}^p
\beta_{\ell}
\frac{\frac{\partial}{\partial \theta_{\ell}}k_{Y,\theta_0}( s_i-s_j )}{k_{Y,\theta_0}(0 )}
-
\sum_{\ell=1}^p
\beta_{\ell}
\frac{
k_{Y,\theta_0}( s_i-s_j )
\frac{\partial}{\partial \theta_{\ell}} k_{Y,\theta_0}(0 ) 
}{k_{Y,\theta_0}(0 )^2}
\right)^2
&
\notag \\
&
\leq 
\frac{2}{n}
\sum_{i,j=1}^n
\left(
\sum_{\ell=1}^p
\beta_{\ell}
\frac{\frac{\partial}{\partial \theta_{\ell}}k_{Y,\theta_0}( s_i-s_j )}{k_{Y,\theta_0}(0 )}
\right)^2
+
\frac{2}{n}
\sum_{i,j=1}^n
\left(
\sum_{\ell=1}^p
\beta_{\ell}
\frac{
k_{Y,\theta_0}( s_i-s_j )
\frac{\partial}{\partial \theta_{\ell}} k_{Y,\theta_0}(0 ) 
}{k_{Y,\theta_0}(0 )^2}
\right)^2
\notag \\
& \leq 
\frac{\Csup}{n}
\sum_{i,j=1}^n
\left(
\sum_{\ell=1}^p
\beta_{\ell}
\frac{\partial}{\partial \theta_{\ell}}k_{Y,\theta_0}( s_i-s_j )
\right)^2
+
\Csup
\left(
\sum_{\ell=1}^p
\beta_{\ell}
\frac{\partial}{\partial \theta_{\ell}} k_{Y,\theta_0}(0 )
\right)^2,
&
\end{flalign}
where the second $\Csup$ comes from Lemma~\ref{lem:Rtheta:Rtheta:m1} and from the classical control of the square Frobenius norm by $n$ times the largest square eigenvalue, for $n \times n$ symmetric matrices. If Condition~\ref{cond:asymptotic:identifiability:local:theta} does not hold, 
 there exists $(\beta_1^\star,\ldots,\beta_{p}^\star) \neq (0, \ldots , 0)$ and a subsequence $n_m$ such that
 \[
 \frac{1}{n_m}
 \sum_{i,j=1}^{n_m}
\left(
\sum_{\ell=1}^p
\beta_{\ell}^\star
\frac{\partial}{\partial \theta_{\ell}}k_{Y,\theta_0}( s_i-s_j )
\right)^2
\to_{m \to \infty} 0
 \]
 and thus, considering the diagonal elements in the double sum above, 
 \[
\beta_1^\star 
=
\sum_{\ell=1}^p
\beta_{\ell}^\star
\frac{\partial}{\partial \theta_{\ell}} k_{Y,\theta_0}(0 )
= 0.
 \]
Hence, from \eqref{eq:for:cond:CV:imply:cond:ML:two}, letting $(\beta^\star_1 , \ldots , \beta^\star_p) = (0,\alpha^\star_1,\ldots,\alpha^\star_{p-1})$, we have
\[
\frac{1}{n_m}
\sum_{i,j=1}^{n_m}
\left(
\sum_{\ell=1}^{p-1}
\alpha_{\ell}^\star
\frac{\partial}{\partial \psi_{\ell}}
c_{Y,\psi_0  }( s_i-s_j )
\right)^2
\to 0
\]
and thus Condition~\ref{cond:asymptotic:identifiability:local:theta:CV} does not hold.
\end{proof}

\begin{proof}[{\bfseries Proof of Theorem~\ref{theorem:CLT:joint}}]
Let $\lambda$ and $\gamma$ be two column vectors in $\mathbb{R}^p$ and $\mathbb{R}^{p-1}$. Let also
\[
    W_n = \lambda^\top (\hat{\theta}_{\text{ML}} - \theta_0) + \gamma^\top (\hat{\psi}_\text{CV} - \psi_0),
\]
for $n \in \mathbb{N}$.

From the proofs of Theorems~\ref{theorem:CLT:thetaML} and~\ref{theorem:CLT:thetaCV} (see also the proof of Proposition D.10 in \cite{bachoc14asymptotic} that is referred to there), we know
\begin{equation*}
    \sqrt{n} (\hat{\theta}_{\text{ML}} - \theta_0) = \sqrt{n} M^{-1}_{\theta_0} \frac{\partial}{\partial \theta} L_{\theta_0} + o_p(1)
\end{equation*}
and
\begin{equation*}
    \sqrt{n} (\hat{\psi}_\text{CV} - \psi_0) = \sqrt{n} N^{-1}_{\psi_0} \frac{\partial}{\partial \psi} CV_{\psi_0} + o_p(1).
\end{equation*}

Also, from Condition~\ref{cond:cov:Y:theta} and Lemma~\ref{lem:Rtheta:m1:ij} used inside Lemma~\ref{lem:product:matrix}, we have for $i=1,\ldots,p$,
\[
    \partial L_{\theta_0} / \partial \theta_i  =  \frac{1}{n} (y^\top A_{\theta_0,i} y) + c_{\theta_0},
\]
where $c_{\theta_0} \in \mathbb{R}$ is deterministic
and, for $i =1 , \ldots,p-1$,
\[
    \partial CV_{\psi_0} / \partial \psi_i  = \frac{1}{n} (y^\top B_{\psi_0,i} y),
\]
where $A_{\theta,i}$ is a $n \times n$ symmetric matrix satisfying $\sup_{\theta \in \Theta}
| (A_{\theta,i})_{a,b} | \leq \Csup/(1+|s_a-s_b|^{d+\Cinf})$ and $B_{\psi,i}$ is a $n \times n$ matrix satisfying $\sup_{\psi \in \mathcal{S}} | (B_{\psi,i})_{a,b} | \leq \Csup/(1+|s_a-s_b|^{d+\Cinf})$. As in the proofs of Theorems~\ref{theorem:CLT:thetaML} and~\ref{theorem:CLT:thetaCV}, one can check that $\partial L_{\theta_0} / \partial \theta_i$ has mean zero for $i=1,\ldots,p$ and $\partial CV_{\psi_0} / \partial \psi_i$ has mean zero for $i=1,\ldots,p-1$. Thus, we can rewrite
\[
 W_n
 = J_n
 - 
 \IE[J_n]
 +o_p(n^{-1/2})
\]
with
\[
    J_n = \frac{1}{n} y^\top \left( \sum_{i=1}^p (\lambda^\top  M^{-1}_{\theta_0})_i\ A_{\theta_0, i} + \sum_{i=1}^{p-1} (\gamma^\top  N^{-1}_{\psi_0})_i\ B_{\psi_0, i} \right) y.
\]

As the vectors $\lambda$ and $\gamma$ as well as the matrices $M^{-1}_{\theta_0}$ and $N^{-1}_{\psi_0}$ are fixed, the bound

\[
    \left\lvert \left( \sum_{i=1}^p (\lambda^\top  M^{-1}_{\theta_0})_i\ A_{\theta_0, i} + \sum_{i=1}^{p-1} (\gamma^\top  N^{-1}_{\psi_0})_i\ B_{\psi_0, i} \right)_{k,l} \right\rvert \leq \frac{ \Csup }{ 1 + | s_k - s_l |^{d + \Cinf }}
\]
holds for all $k,l \in \{1, \dots, n\}$.

Let then $\mathcal{L}_{J,\theta_0,n}$ be the distribution of $n^{1/2} ( J_n - \IE[J_n])$. Then, from Theorem~\ref{theorem:generic:TCL}, as $n \to \infty$, 
\[
    d_w \left( \mathcal{L}_{J,\theta_0,n}, \cN \left[ 0, n \var(J_n) \right] \right) \to 0.
\]

The variance can be written as
\begin{align*}
    n \var(J_n) =&\ \sum_{i=1}^p \sum_{j=1}^p (\lambda^\top  M^{-1}_{\theta_0})_i\ (\lambda^\top  M^{-1}_{\theta_0})_j\ (\Sigma_{\theta_0})_{i,j}
    + \sum_{i=1}^{p-1} \sum_{j=1}^{p-1} (\gamma^\top  N^{-1}_{\psi_0})_i\ (\gamma^\top  N^{-1}_{\psi_0})_j\ (\Gamma_{\psi_0})_{i,j}\\
    &+ 2 \sum_{i=1}^p \sum_{j=1}^{p-1} (\lambda^\top  M^{-1}_{\theta_0})_i\ (\gamma^\top  N^{-1}_{\psi_0})_j\ (\Omega_{\theta_0})_{i,j}
\end{align*}
with
\[
    \Omega_{i,j} = \cov \left( \sqrt{n} \frac{\partial}{\partial \theta_i} L_{\theta_0}, \sqrt{n} \frac{\partial}{\partial \psi_j} CV_{\psi_0} \right).
\]
Hence, by applying product by blocks we get the matrix form expression
\[
n    \var(J_n) = \left( \lambda^\top, \gamma^\top \right) D^{-1}_{\theta_0} \Psi_{\theta_0} D^{-1}_{\theta_0} \left( \begin{array}{cc}
        \lambda \\
        \gamma \\
    \end{array} \right).
\]
We conclude the proof by applying the Wald Theorem.
\end{proof}

\section*{Acknowledgements}
RF acknowledges the support of the Swiss National Science
Foundation SNSF-175529. FB acknowledges the support of a PEPS from the French Centre national de la recherche scientifique. This research was partly undertaken within the RISCOPE ANR project.


\begin{thebibliography}{}

\bibitem{abrahamsen97review}
P.~Abrahamsen.
\newblock A review of {Gaussian} random fields and correlation functions.
\newblock Technical report, Norwegian Computing Center, 1997.

\bibitem{alegria2017estimating}
A.~Alegr{\'\i}a, S.~Caro, M.~Bevilacqua, E.~Porcu, and J.~Clarke.
\newblock Estimating covariance functions of multivariate skew-{Gaussian}
  random fields on the sphere.
\newblock {\em Spatial Statistics}, 22:388--402, 2017.

\bibitem{And2010}
E.~Anderes.
\newblock On the consistent separation of scale and variance for {G}aussian
  random fields.
\newblock {\em The Annals of Statistics}, 38:870--893, 2010.

\bibitem{ABKLN18}
J.-M. Aza\"is, F.~Bachoc, T.~Klein, A.~Lagnoux, and T.~M.~N. Nguyen.
\newblock {Semi-parametric estimation of the variogram of a {Gaussian} process
  with stationary increments}.
\newblock {\em arXiv:1806.03135}, 2018.

\bibitem{Azais2009level}
J.-M. Aza\"is and M.~Wschebor.
\newblock {\em Level Sets and Extrema of Random Processes and Fields}.
\newblock John Wiley \& Sons, 2009.

\bibitem{bachoc2013cross}
F.~Bachoc.
\newblock Cross validation and maximum likelihood estimations of
  hyper-parameters of {Gaussian} processes with model mispecification.
\newblock {\em Computational Statistics and Data Analysis}, 66:55--69, 2013.

\bibitem{bachoc14asymptotic}
F.~Bachoc.
\newblock Asymptotic analysis of the role of spatial sampling for covariance
  parameter estimation of {Gaussian} processes.
\newblock {\em Journal of Multivariate Analysis}, 125:1--35, 2014.

\bibitem{bachoc2018asymptotic}
F.~Bachoc.
\newblock Asymptotic analysis of covariance parameter estimation for {Gaussian}
  processes in the misspecified case.
\newblock {\em Bernoulli}, 24(2):1531--1575, 2018.

\bibitem{bachoc16improvement}
F.~Bachoc, K.~Ammar, and J.~Martinez.
\newblock Improvement of code behavior in a design of experiments by
  metamodeling.
\newblock {\em Nuclear science and engineering}, 183(3):387--406, 2016.

\bibitem{bachoc14calibration}
F.~Bachoc, G.~Bois, J.~Garnier, and J.~Martinez.
\newblock Calibration and improved prediction of computer models by universal
  {Kriging}.
\newblock {\em Nuclear Science and Engineering}, 176(1):81--97, 2014.

\bibitem{bachoc2016smallest}
F.~Bachoc and R.~Furrer.
\newblock On the smallest eigenvalues of covariance matrices of multivariate
  spatial processes.
\newblock {\em Stat}, 5(1):102--107, 2016.

\bibitem{bachoc2018gaussian}
F.~Bachoc, F.~Gamboa, J.-M. Loubes, and N.~Venet.
\newblock A {Gaussian} process regression model for distribution inputs.
\newblock {\em IEEE Transactions on Information Theory}, 64(10):6620--6637,
  2018.

\bibitem{bachoclagnoux2017}
F.~Bachoc, A.~Lagnoux, and T.~M.~N. Nguyen.
\newblock Cross-validation estimation of covariance parameters under
  fixed-domain asymptotics.
\newblock {\em Journal of Multivariate Analysis}, 160:42--67, 2017.

\bibitem{bates1969combination}
J.~M. Bates and C.~W. Granger.
\newblock The combination of forecasts.
\newblock {\em Journal of the Operational Research Society}, 20(4):451--468,
  1969.

\bibitem{billingsley68convergence}
P.~Billingsley.
\newblock {\em Convergence of Probability Measures}.
\newblock Wiley, New York, 1968.

\bibitem{cacoullos1982upper}
T.~Cacoullos.
\newblock On upper and lower bounds for the variance of a function of a random
  variable.
\newblock {\em The Annals of Probability}, pages 799--809, 1982.

\bibitem{chiles2009geostatistics}
J.-P. Chiles and P.~Delfiner.
\newblock {\em Geostatistics: Modeling Spatial Uncertainty}.
\newblock John Wiley \& Sons, 2009.

\bibitem{Cre1993}
N.~Cressie.
\newblock {\em Statistics for Spatial Data}.
\newblock John Wiley \& Sons, 1993.

\bibitem{doukhan94mixing}
P.~Doukhan.
\newblock {\em Mixing, Properties and Examples}.
\newblock Lecture Notes in Statistics. Springer-Verlag, New York, 1994.

\bibitem{DuZhaMan2009}
J.~Du, H.~Zhang, and V.~Mandrekar.
\newblock Fixed-domain asymptotic properties of tapered maximum likelihood
  estimators.
\newblock {\em The Annals of Statistics}, 37:3330--3361, 2009.

\bibitem{dubrule83cross}
O.~Dubrule.
\newblock Cross validation of {Kriging} in a unique neighborhood.
\newblock {\em Mathematical Geology}, 15:687--699, 1983.

\bibitem{dudleyreal}
R.~M. Dudley.
\newblock {\em Real Analysis and Probability}.
\newblock Cambridge University Press, 2002.

\bibitem{furrer2016asymptotic}
R.~Furrer, F.~Bachoc, and J.~Du.
\newblock Asymptotic properties of multivariate tapering for estimation and
  prediction.
\newblock {\em Journal of Multivariate Analysis}, 149:177--191, 2016.

\bibitem{ibragimov78gaussian}
I.~Ibragimov and Y.~Rozanov.
\newblock {\em Gaussian Random Processes}.
\newblock Springer-Verlag, New York, 1978.

\bibitem{istas97quadratic}
J.~Istas and G.~Lang.
\newblock Quadratic variations and estimation of the local {H}\"older index of
  a {G}aussian process.
\newblock {\em Annales de l'Institut Henri Poincar\'e}, 33:407--436, 1997.

\bibitem{jenish2009central}
N.~Jenish and I.~R. Prucha.
\newblock Central limit theorems and uniform laws of large numbers for arrays
  of random fields.
\newblock {\em Journal of econometrics}, 150(1):86--98, 2009.

\bibitem{jones98efficient}
D.~Jones, M.~Schonlau, and W.~Welch.
\newblock Efficient global optimization of expensive black box functions.
\newblock {\em Journal of Global Optimization}, 13:455--492, 1998.

\bibitem{ShaKau2013}
C.~Kaufman and B.~Shaby.
\newblock The role of the range parameter for estimation and prediction in
  geostatistics.
\newblock {\em Biometrika}, 100:473--484, 2013.

\bibitem{lavancier2016general}
F.~Lavancier and P.~Rochet.
\newblock A general procedure to combine estimators.
\newblock {\em Computational Statistics \& Data Analysis}, 94:175--192, 2016.

\bibitem{mardia84maximum}
K.~Mardia and R.~Marshall.
\newblock Maximum likelihood estimation of models for residual covariance in
  spatial regression.
\newblock {\em Biometrika}, 71:135--146, 1984.

\bibitem{matheron70theorie}
G.~Matheron.
\newblock {\em La Th\' eorie des Variables R\' egionalis\' ees et ses
  Applications}.
\newblock Fasicule 5 in Les Cahiers du Centre de Morphologie Math{\'e}matique
  de Fontainebleau. Ecole Nationale Sup\'erieure des Mines de Paris, 1970.

\bibitem{Neumann13}
M.~H. Neumann.
\newblock A central limit theorem for triangular arrays of weakly dependent
  random variables, with applications in statistics.
\newblock {\em ESAIM: Probability and Statistics}, 17:120--134, 2013.

\bibitem{paulo12calibration}
R.~Paulo, G.~Garcia-Donato, and J.~Palomo.
\newblock Calibration of computer models with multivariate output.
\newblock {\em Computational Statistics and Data Analysis}, 56:3959--3974,
  2012.

\bibitem{rasmussen06gaussian}
C.~Rasmussen and C.~Williams.
\newblock {\em {Gaussian} Processes for Machine Learning}.
\newblock The MIT Press, Cambridge, 2006.

\bibitem{sacks89design}
J.~Sacks, W.~Welch, T.~Mitchell, and H.~Wynn.
\newblock Design and analysis of computer experiments.
\newblock {\em Statistical Science}, 4:409--423, 1989.

\bibitem{santner03design}
T.~Santner, B.~Williams, and W.~Notz.
\newblock {\em The Design and Analysis of Computer Experiments}.
\newblock Springer, New York, 2003.

\bibitem{shaby12tapered}
B.~A. Shaby and D.~Ruppert.
\newblock Tapered covariance: Bayesian estimation and asymptotics.
\newblock {\em Journal of Computational and Graphical Statistics},
  21(2):433--452, 2012.

\bibitem{AEPRFMCF}
M.~Stein.
\newblock Asymptotically efficient prediction of a random field with a
  misspecified covariance function.
\newblock {\em Annals of Statistics}, 16:55--63, 1988.

\bibitem{BELPUICF}
M.~Stein.
\newblock Bounds on the efficiency of linear predictions using an incorrect
  covariance function.
\newblock {\em Annals of Statistics}, 18:1116--1138, 1990.

\bibitem{UAOLPRFUISOS}
M.~Stein.
\newblock Uniform asymptotic optimality of linear predictions of a random field
  using an incorrect second-order structure.
\newblock {\em Annals of Statistics}, 18:850--872, 1990.

\bibitem{stein99interpolation}
M.~Stein.
\newblock {\em Interpolation of Spatial Data: Some Theory for {Kriging}}.
\newblock Springer-Verlag, New York, 1999.

\bibitem{WanLoh2011}
D.~Wang and W.-L. Loh.
\newblock On fixed-domain asymptotics and covariance tapering in {G}aussian
  random field models.
\newblock {\em Electronic Journal of Statistics}, 5:238--269, 2011.

\bibitem{xu2017tukey}
G.~Xu and M.~G. Genton.
\newblock Tukey g-and-h random fields.
\newblock {\em Journal of the American Statistical Association},
  112(519):1236--1249, 2017.

\bibitem{yan2018gaussian}
Y.~Yan and M.~G. Genton.
\newblock {Gaussian} likelihood inference on data from trans-{Gaussian} random
  fields with {Mat{\'e}rn} covariance function.
\newblock {\em Environmetrics}, 29(5-6):e2458, 2018.

\bibitem{ying91asymptotic}
Z.~Ying.
\newblock Asymptotic properties of a maximum likelihood estimator with data
  from a {Gaussian} process.
\newblock {\em Journal of Multivariate Analysis}, 36:280--296, 1991.

\bibitem{ying93maximum}
Z.~Ying.
\newblock Maximum likelihood estimation of parameters under a spatial sampling
  scheme.
\newblock {\em Annals of Statistics}, 21:1567--1590, 1993.

\bibitem{zhang04inconsistent}
H.~Zhang.
\newblock Inconsistent estimation and asymptotically equivalent interpolations
  in model-based geostatistics.
\newblock {\em Journal of the American Statistical Association}, 99:250--261,
  2004.

\bibitem{zhang10kriging}
H.~Zhang and Y.~Wang.
\newblock {Kriging} and cross validation for massive spatial data.
\newblock {\em Environmetrics}, 21:290--304, 2010.

\end{thebibliography}
\end{document}